\def\dim{\mathrm{dim}}
\def\Ric{\mathrm{Ric}}
\def\Vol{\mathrm{Vol}}
\def\Id{\mathrm{Id}}
\newtheorem{thm}{Theorem}[section]
\newtheorem{lem}[thm]{Lemma}
\newtheorem{prop}[thm]{Proposition}
\newtheorem{conj}[thm]{Conjecture}
\newtheorem{ques}[thm]{Question}
\newtheorem*{claim*}{Claim}
\theoremstyle{definition}
\newtheorem{defn}{Definition}[section]
\theoremstyle{remark}
\newtheorem{rmk}{Remark}[section]
\newtheorem*{rmk*}{Remark}
\newtheorem*{fact*}{Fact}
\title[Some stability results of the positive mass theorem]{Some stability results of positive mass theorem for uniformly asymptotically flat $3$-manifolds}
\author{Conghan Dong}
\address{Mathematics Department, Stony Brook University, NY 11794, United States}
\email{conghan.dong@stonybrook.edu}
\begin{document}
%\date{\today}

\maketitle
\begin{abstract}
	In this paper, we show that for a sequence of orientable complete uniformly asymptotically flat $3$-manifolds $(M_i, g_i)$ with nonnegative scalar curvature and ADM mass $m(g_i)$ tending to zero, by subtracting some open subsets $Z_i$, whose boundary area satisfies $\mathrm{Area}(\partial Z_i) \leq C m(g_i)^{\frac{1}{2}- \varepsilon }$, for any base point $p_i \in M_i\setminus  Z_i$, $(M_i\setminus Z_i, g_i, p_i)$ converges to the Euclidean space $(\mathbb{R}^3, g_E, 0)$ in the  $C^0$ modulo negligible volume sense. Moreover, if we assume that the Ricci curvature is uniformly bounded from below, then $(M_i, g_i, p_i)$ converges to $(\mathbb{R}^3, g_E, 0)$ in the pointed Gromov-Hausdorff topology. 

\medskip

\noindent\textsc{R\'esum\'e.}
Dans cet article, nous démontrons que pour une suite de variétés tridimensionnelles orientables, complètes et uniformément asymptotiquement plates $(M_i, g_i)$ avec courbure scalaire non négative et dont la masse ADM $m(g_i)$ tend vers zéro, en enlevant certains sous-ensembles ouverts $Z_i$, dont la zone de frontière satisfait $\mathrm{Aire}(\partial Z_i) \leq C \cdot m(g_i)^{\frac{1}{2} - \varepsilon}$, pour tout point de base $p_i \in M_i \setminus Z_i$, le triplet $(M_i \setminus Z_i, g_i, p_i)$ converge vers l'espace euclidien $(\mathbb{R}^3, g_E, 0)$ dans le sens $C^0$, modulo un volume négligeable. De plus, si nous supposons que la courbure de Ricci est uniformément bornée inférieurement, alors le triplet $(M_i, g_i, p_i)$ converge vers $(\mathbb{R}^3, g_E, 0)$ pour la topologie de Gromov-Hausdorff pointée.

\end{abstract}

\tableofcontents

\section{Introduction}
A smooth orientable connected complete Riemannian $3$-manifold $(M^3, g)$ with one end is called $(A, B, \sigma )$-asymptotically flat (or $(A,B, \sigma )$-AF for short) for some given $A,B>0$ and $ \sigma >\frac{1}{2}$, if there exists a compact subset $K \subset M$ and a $C^\infty$-diffeomorphism $\Phi : M \setminus K \to  \mathbb{R}^3 \setminus B(0, A)$ such that under this identification, 
	$$
	| \partial ^{l}(g_{ij}- \delta _{ij})(x)| \leq B|x|^{-\sigma - |l|},\ \ \forall x \in \mathbb{R}^3\setminus B(0,A)
	$$ for all multi-indices $|l|=0,1,2$. Furthermore, we always assume the scalar curvature $R_g$ is integrable. 

	In general, a connected $3$-manifold $(M^3,g)$ with more than one end is called an AF $3$-manifold if there exists a compact subset $K \subset M$ such that $M\setminus K$ consists of finite pairwise disjoint ends $\{ M_{end}^k\}_{k=1}^N$, and each end $M_{end}^k$ is $(A_k, B_k, \sigma_k )$-AF as defined above for some $A_k, B_k>0$ and $\sigma_k > \frac{1}{2}$.

	Given an AF $3$-manifold $(M^3, g)$ with ends $M_{end}^{1},\ldots, M_{end}^N$, the ADM mass of the end $M_{end}^k$, coming from general relativity \cite{ADM61}, is defined as 
	$$
	m(M_{end}^k, g)=\lim_{r\to \infty}\frac{1}{16\pi}\int_{S_r}\sum_{i,j=1}^3(\partial _ig_{ij}-\partial _j g_{ii})\nu ^j dA,
$$ where all quantities in the integral are computed using the Euclidean background metric determined by the asymptotically flat coordinate in $M_{end}^k$ and $\nu $ is the unit normal vector to the standard sphere $S_r \subset \mathbb{R}^3$ with radius $r$. It has been shown in \cite{Bartnik86} that the mass of each end is finite and independent of the asymptotically flat coordinates. 

Let $m(g)$ be the maximum of $\{m(M_{end}^k, g)\}_{k=1}^N$, and $(\mathbb{R}^3, g_E)$ be the standard Euclidean $3$-space. We have the following positive mass theorem.
	\begin{thm}
		Let $(M^3, g)$ be an AF $3$-manifold with nonnegative scalar curvature. Then $m(g) \geq 0$, and equality holds if and only if $(M,g)=(\mathbb{R}^3, g_E)$ isometrically.
	\end{thm}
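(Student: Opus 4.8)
The plan is to run Witten's spinor argument, which is valid under exactly these hypotheses ($\sigma>\tfrac12$, $R_g\in L^1$; recall that $M$ is orientable by the standing assumptions, hence parallelizable and spin) and which delivers both the inequality and the rigidity characterization at once. Fix a spin structure, let $\mathbb{S}\to M$ be the (rank-two complex) spinor bundle, $\nabla$ its connection and $D$ the Dirac operator, and recall the Lichnerowicz--Weitzenb\"ock identity $D^{2}=\nabla^{*}\nabla+\tfrac14 R_g$. Using the asymptotically Euclidean orthonormal frame in the distinguished end, extend a unit ``constant'' spinor $\psi_0$ smoothly across $K$; the AF decay gives $D\psi_0=O(|x|^{-1-\sigma})$ with a compactly supported interior error, so $D\psi_0\in L^{2}(M)$. \emph{Step 1:} produce $\phi$ with $D\phi=-D\psi_0$ and $\phi\to0$ at infinity. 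I would invoke the weighted elliptic theory for AF manifolds (Lockhart--McOwen, Bartnik): $D$ is Fredholm between the relevant weighted Sobolev spaces, and it is surjective onto the target because its cokernel, identified by weighted duality with a space of decaying harmonic spinors, is trivial---any such spinor would, by integrating the Lichnerowicz identity with vanishing boundary term and using $R_g\ge0$, be parallel with constant nonzero norm, impossible on the infinite-volume complete manifold $M$. Then $\psi:=\psi_0+\phi$ is a harmonic spinor with $\psi-\psi_0$ and $\nabla\psi-\nabla\psi_0$ tending to zero at infinity, and $\nabla\psi\in L^{2}(M)$.

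\emph{Step 2} is Witten's boundary identity. Pairing $D^{2}=\nabla^{*}\nabla+\tfrac14 R_g$ with $\psi$ and integrating over the region $\Omega_r$ bounded by the coordinate sphere $S_r$, then using $D\psi=0$, yields
\[
\int_{\Omega_r}\Bigl(|\nabla\psi|^{2}+\tfrac14 R_g|\psi|^{2}\Bigr)\,dV=\int_{S_r}\bigl\langle\nabla_\nu\psi+\nu\cdot D\psi,\ \psi\bigr\rangle\,dA .
\]
Expanding the spin connection $1$-form in the AF chart and using $\psi\to\psi_0$, the right-hand side converges as $r\to\infty$ to $c\,m(g)\,|\psi_0|^{2}$ for a positive constant $c$ fixed by the normalization of $m$; the contributions of $\phi$ and $\nabla\phi$ drop out because of their decay, and the integrability of $R_g$ is what makes the limit well-defined. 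Since $R_g\ge0$ the left-hand side is $\ge0$, hence $m(g)\ge0$.

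\emph{Step 3}, the rigidity: if $m(g)=0$ then for any $\psi_0\ne0$ the associated $\psi$ has $\nabla\psi\equiv0$ and $R_g|\psi|^{2}\equiv0$, so $R_g\equiv0$, and a parallel spinor forces $\Ric_g\equiv0$ (Clifford-multiply the identity $\sum_j e_j\cdot R(e_i,e_j)\psi=\tfrac12\,\Ric(e_i)\cdot\psi$ and use that a nonzero vector acts invertibly on the nowhere-vanishing spinor $\psi$). In dimension three $\Ric_g\equiv0$ implies $g$ is flat. A complete flat $3$-manifold is $\R^{3}/\Gamma$ for $\Gamma$ acting freely and properly discontinuously by isometries, and among these only $\Gamma$ trivial has an end diffeomorphic to $\R^{3}\setminus B(0,A)$; so $M\cong\R^{3}$, and a complete flat metric on $\R^{3}$ is Euclidean. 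Thus $(M,g)=(\R^{3},g_E)$.

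The step I expect to be the real obstacle is the boundary computation in Step 2: showing that $\displaystyle\lim_{r\to\infty}\int_{S_r}\langle\nabla_\nu\psi+\nu\cdot D\psi,\psi\rangle\,dA$ equals a positive multiple of $\tfrac{1}{16\pi}\int_{S_r}\sum_{j,k}(g_{jk,j}-g_{jj,k})\nu^{k}\,dA$ requires a careful expansion of the spin connection coefficients in AF coordinates, tracking precisely which terms survive at the borderline decay $\sigma>\tfrac12$, and it is exactly here that $R_g\in L^{1}$ enters to control the limit---this is the substance of Bartnik's rigorous treatment of Witten's proof. As an alternative I would fall back on the Schoen--Yau minimal-surface argument: supposing $m(g)<0$, the negative mass provides barriers allowing one to solve a sequence of Plateau problems and extract a complete properly embedded stable minimal surface $P$ asymptotic to a coordinate plane; feeding $P$ into the stability inequality together with the Gauss equation and $R_g\ge0$ bounds $\int_P K_P^{-}$ and forces $\int_P K_P\le 2\pi\chi(P)$ with strict inequality unless $P$ is totally geodesic and flat, contradicting $m(g)<0$. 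There the analogous hard point is the construction and plane-asymptotics of $P$.
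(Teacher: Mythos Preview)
The paper does not give its own proof of this theorem: it is stated as background and attributed to the literature (Schoen--Yau via minimal surfaces, Witten via spinors, Huisken--Ilmanen via inverse mean curvature flow, Li via Ricci flow, and Bray--Kazaras--Khuri--Stern via the harmonic-function mass inequality), with no argument supplied. So there is no ``paper's proof'' to compare against here.

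Your outline of Witten's argument is essentially correct and follows one of the cited references. A couple of minor points you would want to tighten in a full write-up: (i) the paper's definition of AF allows several ends, so when building the asymptotically constant harmonic spinor you should make $\psi_0$ constant in the chosen end and decaying in all other ends, and verify the Fredholm/solvability step in that setting; (ii) your cokernel-triviality sentence is slightly garbled---the point is that a decaying harmonic spinor with $R_g\ge 0$ is parallel of constant norm, and since it decays that norm must be zero. Your identification of the boundary computation as the crux, and Bartnik's role in making it rigorous at the borderline decay $\sigma>\tfrac12$ with $R_g\in L^1$, is exactly right.
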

	This theorem was first proved by Schoen-Yau \cite{SchoenYau79a} in 1979 through the construction of stable minimal surfaces. Later, Schoen-Yau extended the theorem to dimensions less than eight \cite{Schoen89, SchoenYau79b}. In 1981, Witten \cite{Witten81} further proved the positive mass theorem for spin manifolds of any dimension. In 2001, as a byproduct of the proof of the Penrose inequality, Huisken-Ilmanen \cite{HuiskenIlmanen01} established the three-dimensional positive mass theorem using the inverse mean curvature flow. More recently, Li \cite{Li18} provided a proof of the three-dimensional positive mass theorem using Ricci flow, and Bray-Kazaras-Khuri-Stern \cite{BKKS22} gave a proof using a mass inequality.

	The positive mass theorem provides a rigidity statement. Consequently, a natural question arises regarding the stability problem.
\begin{ques}\label{question}
	Does the smallness of the mass imply that the manifold is close to the Euclidean space in some topology?
\end{ques}

It is currently unknown which topology would be a suitable choice for studying such problems in the context of nonnegative scalar curvature. In 2001, Huisken-Ilmanen \cite{HuiskenIlmanen01} proposed a conjecture regarding this question, specifically in terms of the Gromov-Hausdorff topology.

\begin{conj}\label{HI'conj}
	Suppose $M_i$ is a sequence of asymptotically flat $3$-manifolds with nonnegative scalar curvature and ADM mass tending to zero. Then there exists a subset $Z_i \subset M_i$ such that the boundary area $\mathrm{Area}(\partial Z_i)$ approaches zero, and $M_i \setminus Z_i$ converges to $\mathbb{R}^3$ in the Gromov-Hausdorff topology.
\end{conj}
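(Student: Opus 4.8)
To prove Conjecture~\ref{HI'conj} — in the form established in this paper, namely pointed weak volume convergence in general and pointed Gromov--Hausdorff convergence under a uniform Ricci lower bound — the plan is to construct, for each $i$, an almost-isometry from $M_i$ minus a small bad set onto $\R^3$ by combining harmonic coordinates with the quantitative mass inequality of \cite{BKKS22}. Because pointed convergence only sees bounded regions and the base points $p_i$ lie on a uniformly asymptotically flat end, one may first discard the other ends and, after a diffeomorphism, fix the AF chart with uniform constants $A,B,\sigma$. On each $M_i$ solve the three exterior problems $\Delta_{g_i}u_i^k=0$ with $u_i^k-x^k\to 0$ at infinity, where $x^1,x^2,x^3$ are the AF coordinates; uniform AF elliptic estimates give existence, uniqueness, and $u_i^k=x^k+O(|x|^{1-\sigma})$ with $\nabla u_i^k\to\partial_{x^k}$, uniformly in $i$. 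Write $U_i=(u_i^1,u_i^2,u_i^3)\colon M_i\to\R^3$, a proper map asymptotic to the identity near infinity.

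The key input is the Bray--Kazaras--Khuri--Stern mass formula: combined with $R_{g_i}\ge 0$ it gives, for each $k$,
\[
16\pi\, m(g_i)\ \ge\ \int_{M_i}\frac{|\Hess u_i^k|^2}{|\nabla u_i^k|}\,dV_{g_i},
\]
hence $\sum_k\int_{M_i}|\Hess u_i^k|^2/|\nabla u_i^k|\,dV_{g_i}\to 0$. Let $\Gamma_i$ be the Gram matrix $\Gamma_i^{jk}=\langle\nabla u_i^j,\nabla u_i^k\rangle_{g_i}$; then $\Gamma_i=\Id$ exactly where $U_i$ is a local isometry, $\Gamma_i\to\Id$ at infinity, and $|\nabla\Gamma_i^{jk}|\le|\Hess u_i^j|\,|\nabla u_i^k|+|\Hess u_i^k|\,|\nabla u_i^j|$. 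Thus $U_i$ is close to a local isometry away from the locus where the weights $|\nabla u_i^k|$ degenerate; the bad set $Z_i$ will essentially be that locus, i.e.\ the region where $\Gamma_i$ is not close to $\Id$, and the whole argument reduces to controlling it.

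The estimate $|\partial Z_i|\le C\,m(g_i)^{1/2}$ is obtained by a coarea/pigeonhole argument: near the bad set one has a function $h_i$ (built from $U_i$) with $\int|\nabla h_i|\le C\big(\int|\Hess u_i|^2\big)^{1/2}(\mathrm{vol})^{1/2}\le C\,m(g_i)^{1/2}$, and since $\int_a^b|\{h_i=c\}|\,dc=\int_{\{a\le h_i\le b\}}|\nabla h_i|$ over an interval of unit length, some level set has area $\le C\,m(g_i)^{1/2}$; one takes $\partial Z_i$ to be such a level set enclosing the degeneration. On $M_i\setminus Z_i$ one then has $\Gamma_i$ uniformly close to $\Id$, so $U_i$ is a near-isometric immersion there, and a degree argument using properness at infinity shows its image covers all of $\R^3$ outside a set of measure controlled by $|U_i(Z_i)|$. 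This yields the pointed weak volume convergence $(M_i\setminus Z_i,g_i,p_i)\to(\R^3,g_E,0)$.

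For the stronger statement, the hypothesis $\Ric_{g_i}\ge-2\Lambda$ together with the uniformly AF ends gives uniform volume non-collapsing, so the sequence subconverges in pointed GH topology to a length space; the weak volume convergence of $M_i\setminus Z_i$ forces the limit measure to be Lebesgue measure on $\R^3$, and, using that $\partial Z_i$ has vanishing area and was chosen as a controlled level set, one shows the pieces $Z_i$ are GH-negligible, so in fact $(M_i,g_i,p_i)\to(\R^3,g_E,0)$ in pointed GH topology. I expect the main obstacle to be exactly the bad-set analysis — choosing $Z_i$ so that simultaneously $\Gamma_i\approx\Id$ off $Z_i$, $|\partial Z_i|\lesssim m(g_i)^{1/2}$, and $M_i\setminus Z_i$ still surjects onto $\R^3$ up to small measure — together with the passage from $L^2$-smallness of the Hessians to genuine pointwise and distance control. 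This last passage is what fails without curvature assumptions, leaving only weak volume convergence in general, and is precisely what the Ricci lower bound repairs via non-collapsing and comparison geometry.
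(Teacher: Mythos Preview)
First, note that Conjecture~\ref{HI'conj} is not proved in this paper; as you correctly observe, the paper establishes only the weakened forms Theorems~\ref{main-scalar} and~\ref{main-ricci}, and the full conjecture is deferred to \cite{DongSong23}. Comparing your outline to the proofs of those two theorems:

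For the pointed weak volume convergence your plan is essentially the paper's, but two points are glossed over. First, the harmonic functions $u_i^k$ exist on the exterior region $M_{i,ext}$ with Neumann data on $\partial M_{i,ext}$, not on all of $M_i$; discarding other ends is not enough, since interior topology also obstructs global harmonic coordinates. Second, and more substantively, the coarea argument bounds $|\partial E^\tau|$ but does not by itself guarantee the good region is nonempty or contains the asymptotic end. The paper fills this with a Moser-iteration step (Lemma~\ref{hessian estimate}) producing a \emph{pointwise} bound $|\nabla^2 u^j|\le C\,m(g)^{5/96}$ on $M\setminus M_{r_0}$, which forces $\Gamma\approx\Id$ there (Lemma~\ref{onb-est}); this is what makes $E^\tau$ connected and containing the end, and what lets the degree argument and the injectivity analysis of $\mathcal{U}$ run.

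The genuine gap is in the Ricci-lower-bound case. Your sentence ``the weak volume convergence of $M_i\setminus Z_i$ forces the limit measure to be Lebesgue measure on $\R^3$, and \dots\ one shows the pieces $Z_i$ are GH-negligible'' skips the entire content of Section~\ref{removable-sing}. Weak volume convergence together with $|\partial Z_i|\to 0$ does \emph{not} imply that the $Z_i$ are GH-negligible: $Z_i$ could a priori contain long thin tubes or bubbles carrying almost no volume or boundary area yet distorting distances substantially. The paper's argument is of a different nature. It (i) uses a Bishop--Gromov perturbation lemma (Lemma~\ref{nbhd-point}) to show geodesics between good points can be perturbed off $\partial E^{\tau_i}$, hence the GH-limit of $E^{\tau_i}$ is all of $X$; (ii) builds by hand a map $\Xi_L:X\to\R^3$ via distances to an array of reference points in the outer annulus and shows it is $1$-Lipschitz; and (iii) invokes structure theory of $3$-dimensional noncollapsed Ricci limit spaces --- non-branching of geodesics (Proposition~\ref{non-branching}, from \cite{deng2020h}) and the topological-manifold structure (Proposition~\ref{mfd-structure}, from \cite{SimonTopping21}) --- to upgrade $\Xi_L$ to a bijective isometry. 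None of this follows from measure considerations alone, and the non-branching input in particular is essential and absent from your sketch.
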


\begin{rmk}
	Huisken-Ilmanen's motivation of this conjecture stems from the Riemannian Penrose inequality (see Section \ref{AE-harmonic-coord} for more details). Ilmanen also conjectured that $Z_i$ can be chosen in such a way that the boundary area satisfies $\mathrm{Area}(\partial Z_i) \leq 16 \pi m(g)^2$.
\end{rmk}

In 2014, Lee-Sormani \cite{LS14} proposed a conjecture related to the intrinsic flat metric topology (also mentioned in \cite[Conjecture 10.1]{Sormani23}). 
Recently, Lee-Naber-Neumayer \cite{LNN20} introduced $d_p$-convergence, and also formulated a conjecture related to this topology. In addition, significant progress has been made toward addressing the aforementioned question and conjectures, considering additional conditions. For more details, please refer to \cite{BF99, FK02, Cor05, Lee09, LS14, HL15, KKL21, ABK22}.

In this paper, we investigate the stability of the positive mass theorem without imposing additional conditions, inspired by Conjectures \ref{HI'conj} and the recent work of Kazaras-Khuri-Lee \cite{KKL21}. 

Similar to Cheeger-Gromov's smooth convergence, we give the following definition. 

\begin{defn}\label{C^0 modulo}
	Given a sequence of smooth Riemannian manifolds $(\Omega _i, g_i)$ and $(\Omega ,g)$, we say that $(\Omega _i, g_i)$ converges to $(\Omega ,g)$ in the ``$C^0$ modulo negligible volume'' sense if there exist subsets  $U_i \subset \Omega _i$ and embedding maps $\varphi _i: U_i \to \Omega $ such that $\mathrm{Vol}_{g_i}(\Omega _i \setminus U_i) \to 0$, $\mathrm{Vol}_g(\Omega  \setminus \varphi _i(U_i) ) \to 0$, and $| (\varphi _i)_* g_i - g|_{C^0( \varphi _i(U_i) )} \to 0$.

	Given a sequence of pointed smooth Riemannian manifolds  $(M_i, g_i, p_i)$ and $(M, g, p)$, possibly with boundary, we say that $(M_i, g_i, p_i)$ converges to $(M,g, p)$ in the ``pointed $C^0$ modulo negligible volume'' sense if, for any fixed radius $\rho $, there exist subsets $\Omega _i \supset \hat{B}(p_i, \rho )$ and $\Omega \supset \hat{B}(p, \rho )$ such that $(\Omega _i, g_i)$ converges to $(\Omega ,g)$ in the ``$C^0$ modulo negligible volume'' sense. Here, $\hat{B}(p, \rho )$ is the geodesic ball with respect to the induced length metric in $M$, i.e. $\hat{B}(p, \rho )= \{ x \in M: \inf \{\mathrm{Length}_g(\gamma ): \gamma (0)=p, \gamma (1)=x, \gamma \subset M\} \leq \rho  \} $. 
\end{defn}

Our main result gives one answer to Question \ref{question} in the sense of this pointed $C^0$-convergence modulo negligible volume. 

To make the statement precise, we introduce some notations. Consider an AF $3$-manifold $(M,g)$ with a diffeomorphism $\Phi $ defined on one end. For any $r>0$, we define $M_r$ as the interior component of  $M\setminus \Phi ^{-1}( S_r) $, where $S_r$ denotes the standard sphere with radius $r$ in $\mathbb{R}^3$.

\begin{thm}\label{main-scalar}
	Given some constants $A, B >0$, and $\sigma >\frac{1}{2}$, consider a sequence of orientable complete $(A, B, \sigma )$-AF $3$-manifolds $(M_i^3, g_i)$  with nonnegative scalar curvature. If the ADM masses $m(g_i)$ of $(M_i, g_i)$ converge to zero, then there exist uniform constants $L_0, C>0$, depending only on $(A, B, \sigma )$, such that for any sufficiently small positive number $ \varepsilon$, up to a subsequence, there exist open subsets $Z_i \subset M_{i, L_0}$ with boundary area satisfying $\mathrm{Area}_{g_i}(\partial Z_i) \leq C m(g_i)^{\frac{1}{2}- \varepsilon }$, and for any base point $p_i \in M_i\setminus Z_i$, $$(M_i\setminus Z_i, g_i, p_i)\to (\mathbb{R}^3, g_E, 0)$$ in the  pointed $C^0$ modulo negligible volume sense as defined in Definition \ref{C^0 modulo}.
\end{thm}

\begin{rmk}
	Recently, in a paper co-authored with Antoine Song \cite{DongSong23}, we have proven a stronger version of this theorem. Namely, we have removed the uniformly asymptotically flatness assumption, improved the bound of the boundary area to be almost optimal, and established the pointed measured Gromov-Hausdorff convergence for reduced length metrics. As a result, we have confirmed Huisken-Ilmanen's original conjecture.
\end{rmk}

If we assume stronger conditions on the Ricci curvature, we can improve the $C^0$-convergence modulo negligible volume to the Gromov-Hausdorff convergence. The following theorem is the same result as in \cite{KKL21} but without their topological assumption.

\begin{thm}\label{main-ricci}
	Under the same conditions as in Theorem \ref{main-scalar}, for a fixed $\Lambda >0$, if the Ricci curvature are bounded uniformly from below by $\Ric_{g_i} \geq -2\Lambda$, then up to a subsequence, for any base point $p_i \in M_i \setminus M_{i, L_0}$, $(M_i, g_i, p_i)$ converges to $(\mathbb{R}^3, g_E, 0)$ in the pointed Gromov-Hausdorff topology.
\end{thm}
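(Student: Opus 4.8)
The plan is to upgrade the pointed weak volume convergence from Theorem \ref{main-scalar} to pointed Gromov--Hausdorff convergence by exploiting the two-sided geometric control: the lower Ricci bound $\Ric_{g_i} \geq -2\Lambda$ together with the upper bound on the volume growth that one extracts from the weak volume convergence. First I would recall that by Theorem \ref{main-scalar}, after passing to a subsequence there are sets $Z_i \subset M_{i,r_0}$ with $|\partial Z_i| \leq C m(g_i)^{1/2} \to 0$ such that $(M_i \setminus Z_i, g_i, p_i)$ converges to $\mathbb{R}^3$ in the pointed weak volume sense; in particular, for every $r > 0$ one has $\Vol_{g_i} \big( \varphi_i^{-1}(B(p,r)) \cap (M_i \setminus Z_i) \big) \to \Vol_{g_E} B(0,r)$, and the metric comparison $|\varphi_{i,\sharp} g_i - g_E| \leq \tau_i$ holds on the image $Y_i$, which exhausts $\mathbb{R}^3 \setminus B(0,r_0)$. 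The key point to establish is that the ``missing'' piece $Z_i$, together with the region near $\partial Z_i$, does not create a Gromov--Hausdorff defect: since $|\partial Z_i| \to 0$ and $\Ric_{g_i} \geq -2\Lambda$, the Bishop--Gromov inequality bounds $\Vol_{g_i}(Z_i)$ and, more importantly, bounds the volume of any metric ball of bounded radius centered at a point of $\partial Z_i$, so that $Z_i$ is contained in a region of small volume that is ``thin'' in the measured sense.

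The core of the argument is a filling/diameter estimate for $Z_i$. I would argue that each connected component of $Z_i$ has small extrinsic diameter: if a component had diameter bounded below by some $\epsilon_0 > 0$, then by the relative volume comparison (lower Ricci bound) a definite amount of area would have to appear on its boundary at some intermediate distance sphere, contradicting $|\partial Z_i| \to 0$ — more precisely, one uses the coarea formula together with Bishop--Gromov to show that a component of $Z_i$ of diameter $\geq \epsilon_0$ forces $|\partial Z_i| \geq c(\epsilon_0, \Lambda) > 0$. Hence $\diam_{g_i}(Z_i^{(k)}) \to 0$ for each component, uniformly. Combined with the fact that $M_i \setminus Z_i$ is $C^0$-close to $\mathbb{R}^3$ away from a fixed compact set and has the correct volume, this lets me build explicit $\epsilon_i$-Hausdorff approximations $M_i \cap B_{g_i}(p_i, R) \to B_{g_E}(0, R)$ for every fixed $R$: on the bulk $M_i \setminus Z_i$ use the maps $\varphi_i$ (which are almost isometries in the appropriate sense once one controls intrinsic versus extrinsic distance using connectedness and the metric bound), and collapse each small-diameter component of $Z_i$ to a point or a small set. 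One must also check that reinserting $Z_i$ does not shorten distances in $M_i \setminus Z_i$ by more than $\epsilon_i$ — i.e.\ that no short ``shortcut through $Z_i$'' exists — which again follows from the small diameter of the components of $Z_i$.

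A subsidiary but necessary step is to control the \emph{intrinsic} geometry of $M_i \setminus Z_i$ near $\partial Z_i$ and near the fixed compact core $M_{i,r_0}$, since the weak volume convergence only gives $C^0$-closeness on the image $Y_i$ which lies outside a uniform Euclidean ball. Here the lower Ricci bound does the work: Bishop--Gromov gives a uniform upper volume bound on all balls of radius $\leq 3r_0$, and combined with the lower volume bound coming from the weak volume convergence on the exterior, one gets that the core region $M_{i,r_0}$ has bounded volume and bounded diameter (a ball of radius $r_0$ cannot contain an arbitrarily long thin tentacle without violating either the volume upper bound or the area bound on $\partial Z_i$). With diameter and volume control plus $\Ric \geq -2\Lambda$, Gromov precompactness applies, so a subsequence of $(M_i, g_i, p_i)$ converges in pointed Gromov--Hausdorff topology to some limit length space $(X, d, p_\infty)$; the weak volume convergence then forces $X = \mathbb{R}^3$ because $X$ must contain an isometric copy of $\mathbb{R}^3 \setminus B(0,r_0)$ (the limit of the $Y_i$) with the matching volume measure, and the only non-collapsed length space with a lower Ricci-type bound realizing this is $\mathbb{R}^3$ itself. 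I expect the main obstacle to be precisely the diameter/filling estimate for $Z_i$ — controlling the intrinsic geometry of the removed region and the region immediately surrounding it purely from the smallness of $|\partial Z_i|$ and the one-sided Ricci bound — since the weak volume statement of Theorem \ref{main-scalar} gives essentially no a priori control there, and a naive isoperimetric argument is not available without an upper volume bound on $Z_i$, which itself must be bootstrapped from Bishop--Gromov and the location $Z_i \subset M_{i,r_0}$.
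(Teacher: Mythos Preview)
Your proposal has the right overall architecture---pass to a subsequential Gromov--Hausdorff limit $X$, use the structure from Theorem~\ref{main-scalar} on $M_i\setminus Z_i$, then argue $X=\mathbb{R}^3$---but the two load-bearing steps both have genuine gaps.

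First, the diameter estimate for components of $Z_i$ is false as stated. With only $\Ric \geq -2\Lambda$ and $|\partial Z_i|$ small, a component of $Z_i$ can be a long thin capped cylinder $S^2_r \times [0,L]$ (nonnegative Ricci, boundary area $4\pi r^2$) with $L$ arbitrary; more drastically, $Z_i$ contains $M_i \setminus M_{i,ext}$, which for a multi-ended $M_i$ includes entire other ends, so neither $\diam Z_i$ nor $\diam M_{i,r_0}$ need be bounded. What \emph{is} true, and what the paper proves, is a thinness statement inside bounded balls: every point of $B(p_i,D)$ lies within $\Psi(m(g_i)\,|\,D)$ of $E^{\tau_i}$. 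This comes from a geodesic-perturbation argument (Lemma~\ref{nbhd-point}): since $|\partial E^{\tau_i}| \to 0$ and $\Vol B(p_i,1) \geq v_0$, for any $y\in B(p_i,D)$ one can find $z$ arbitrarily close to $y$ and a minimizing segment from a fixed good point $x\in E^{\tau_i}$ to $z$ missing $\partial E^{\tau_i}$, hence $z\in E^{\tau_i}$. Your coarea/Bishop--Gromov sketch is gesturing in this direction, but it cannot yield a diameter bound; the long tube survives precisely because its far end sits at distance $>D$ from $p_i$ and is invisible to the pointed limit.

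Second, the endgame ``$X$ contains an isometric copy of $\mathbb{R}^3 \setminus B(0,r_0)$, and the only non-collapsed Ricci-type limit realizing this is $\mathbb{R}^3$'' is not a standard rigidity theorem and requires real work. The paper first upgrades the $C^0$ metric closeness on $Y^{\tau_i}$ to closeness of \emph{ambient distances} (Lemma~\ref{X'_delta}), again via geodesic perturbation, to get the isometric embedding $\mathbb{R}^3\setminus B(0,L_0)\hookrightarrow X$. It then constructs an explicit $1$-Lipschitz map $\Xi_L: B(p_X,L/2) \to \mathbb{R}^3$ by recording distances to a dense net of reference points in the outer annulus. Injectivity of $\Xi_L$ uses the non-branching property of Ricci limit spaces (Proposition~\ref{non-branching}): if $\Xi_L(y_1)=\Xi_L(y_2)$ then geodesics from $y_1,y_2$ to a common outer point have coinciding $\Xi_L$-images and agree near that outer point, forcing $y_1=y_2$. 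Surjectivity uses that three-dimensional non-collapsed Ricci limits are topological manifolds (Proposition~\ref{mfd-structure}), so degree theory applies. Neither structural ingredient appears in your sketch, and a bare volume or almost-rigidity argument does not suffice here since the Ricci lower bound is only $-2\Lambda$, not $0$.
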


Now, let's discuss the ideas behind the proof of the stability of the positive mass theorem and provide an outline of this paper. First, we recall the result obtained by Kazaras-Khuri-Lee in \cite{KKL21}. In their work, they established Gromov-Hausdorff convergence under additional conditions, namely $H_2(M^3,\mathbb{Z})=0$ and Ricci curvature bounded uniformly from below. The main ingredient utilized by Kazaras-Khuri-Lee is the mass inequality derived by Bray-Kazaras-Khuri-Stern in \cite{BKKS22}. Recall that for any AF $3$-manifod $(M, g)$, the exterior region $M_{ext}$ is an AF $3$-manifold, possibly with boundary, and it satisfies the condition of containing no other compact minimal surfaces except for its boundary (for more details, see Section \ref{AE-harmonic-coord}).
The mass inequality, which will be further discussed in Section  \ref{AE-harmonic-coord}, is as follows:
$$
m(g) \geq \frac{1}{16\pi}\int_{M_{ext}}\left( \frac{|\nabla ^2u^j|^2}{|\nabla u^j|}+R_g|\nabla u^j| \right) d \mathrm{vol}_g,
$$ 
where $\{u^j\}_{j=1}^{3}$ are harmonic functions defined on $M_{ext}$ and asymptotic to three asymptotically flat coordinate functions, and the integral is taken over regular points of each $u^j$. It should be noted that
when $H_2(M,\mathbb{Z})=0$, this mass inequality holds on the entire manifold $M$. 

Under the assumption of a Ricci lower bound, Kazaras-Khuri-Lee employed Cheng-Yau's gradient estimate and various techniques from Cheeger-Colding's theory, such as the segment inequality, to derive an almost pointwise estimate for the functions $\{u^j\}_{j=1}^3$ from the mass inequality. These functions $\{u^j\}_{j=1}^3$ were then chosen as splitting functions to prove that the manifold converges to $\mathbb{R}^3$ as the mass tends to zero. 

In this paper, we aim to study the stability of the positive mass theorem without imposing these additional conditions. Indeed, there are two main difficulties when attempting to generalize the arguments of Kazaras-Khuri-Lee. Firstly, if we drop the condition $H_2(M,\mathbb{Z})=0$, the harmonic functions $\{u^j\}_{j=1}^3$ are only defined on the exterior region $M_{ext}$ and satisfy the Neumann boundary conditions. As a result, Cheng-Yau's gradient estimate does not work and we have no control over the behavior of $|\nabla u^j|$ near the boundary $\partial M_{ext}$. This lack of control prevents us from directly using $\{u^j\}_{j=1}^3 $ as splitting functions in the proof. Secondly, if we remove the lower bound on the Ricci curvature, it becomes challenging to control the behavior of the metric. Without such curvature bound, the metric in $M_{ext}$ could either concentrate or degenerate, making it difficult to establish the desired Gromov-Hausdorff convergence. Given these challenges, a different approach is required to address the general case of the stability problem. 

To overcome these difficulties, we will remove an open subset and focus on the convergence within the remaining closed subset, which we refer to as the ``regular subregion''. More precisely, the regular subregion $E^{\tau }$ is defined as a connected component of
$$
\{x \in M_{ext}: \sum_{j,k=1}^3 |\left<\nabla u^j, \nabla u^k \right>-\delta _{jk}|^2(x) \leq \tau \},
$$ 
which contains the end. In Proposition \ref{regular-subregion}, by utilizing the co-area formula and the mass inequality, for a suitable $\tau $, we show that this regular subregion is well-defined and possesses a smooth boundary with a small area. To ensure $\partial E^{\tau }$ has uniformly small area, we establish uniform estimates for $\{u^j\}$ in Section \ref{effective-estimate}.

When restricted to the regular subregion, the harmonic map $$\mathcal{U}=(u^1, u^2, u^3): E^{\tau }\to Y^{\tau }\subset \mathbb{R}^3$$ is a diffeomorphism onto its image. Moreover, under this map, the metric $g$ is close to the Euclidean metric in the $C^0$-sense. By applying the integral by parts to the integral of $|\nabla u^j|^2$, and considering the small area of $\partial E^{\tau }$, we can demonstrate that $E^{\tau }$ has an almost Euclidean volume. Finally, in Section \ref{flat-convergence}, we prove our main result that, up to diffeomorphism $\mathcal{U}$, the subregion $E^{\tau }$ converges to $\mathbb{R}^3$ in the $C^0$ modulo negligible volume sense.

In the case when the Ricci curvature has a uniform lower bound, we can always find a subsequence that converges to a limit space in the pointed Gromov-Hausdorff topology, denoted by $(M_i, g_i, p_i) \to (X,d_X,p_X)$. Due to the small area of $\partial E^{\tau_i }$, we can use the Bishop-Gromov volume comparison theorem to perturb the geodesic segments between points in $E^{\tau_i }$ and avoid $\partial E^{\tau_i }$. This perturbation allows us to show that $(E^{\tau_i }, d_i, p_i)$ equipped with the restricted metric converges to $(X,d_X,p_X)$. Furthermore, based on the effective estimates on the outer region, we know that the outer region in $X$ is isometric to the outer region in $\mathbb{R}^3$. We then fill the interior region of $X$ by using geodesic segments between points in the outer region. To ensure that all points in the interior region of $X$ can be obtained in this way, we rely on the structural theory of three-dimensional Ricci limit spaces, particularly the facts that $X$ is a topological manifold and geodesics are non-branching. This filling process gives us an isometric embedding 
$$
\Xi : X \to \mathbb{R}^3.
$$ 

Intuitively, one might think of this map $\Xi $ as the limit of the harmonic maps $\mathcal{U}_i: E^{\tau _i}\to \mathbb{R}^3$. However, since $E^{\tau _i}$ may not be a length space and it is not clear whether $\mathcal{U}_i$ is Lipschitz on $E^{\tau _i}$, we can not directly take the limit of $\mathcal{U}_i$. Therefore, we need the aforementioned perturbation and filling process to overcome this difficulty.
All these arguments are included in Section \ref{removable-sing}.

\subsection*{Notations}
We use $C$ to denote a uniform constant, which may vary from line to line. $\Psi (\varepsilon ), \Psi (\varepsilon | a)$ represent small uniform numbers that satisfy $\lim_{\varepsilon \to 0}\Psi (\varepsilon )=0,\ \lim_{\varepsilon \to 0}\Psi (\varepsilon |a)=0$ for each fixed $a$, repectively. $g_E$ denotes the Euclidean metric and $d_E$ denotes the induced distance. $[xy]$ represents the geodesic segment between $x,y \in \mathbb{R}^3$ with respect to metric $g_E$, and $|xy|$ represents the distance. $B(p,r)$ denotes the geodesic ball around point $p \in (M,g)$, and $B(0,r)$ denotes the Euclidean ball in $\mathbb{R}^3$ with center $0$. $|\Sigma|$ represents the area of the surface $\Sigma \subset \mathbb{R}^3$ under the metric $g_E$, and  $|\Omega |$ represents the volume of the domain $\Omega \subset \mathbb{R}^3$ under the metric $g_E$.
\subsection*{Acknowledgements}
The author would like to express gratitude to his advisor, Prof. Xiuxiong Chen, for his encouragement and support. The author thanks Prof. Hubert Bray, Prof. Marcus Khuri and Prof. Antoine Song for their interests in this work and valuable discussions, and Hanbing Fang for introducing the reference \cite{Hein10}. The author also thanks the referee for providing very helpful and detailed comments and suggestions that led to the new Definition \ref{C^0 modulo}.

\section{Preliminaries}
\subsection{Asymptotically flat $3$-manifolds}\label{AE-harmonic-coord}
In this section, we recall some basic facts and estimates of an asymptotically flat (AF) $3$-manifold. Let $(M^3, g)$ be a complete AF  $3$-manifold. According to Lemma 4.1 in \cite{HuiskenIlmanen01}, there exists a trapped compact region $T$ inside $M$, whose topological boundary consists of smooth embedded minimal $2$-spheres. The exterior region, denoted as $M_{ext}$, is defined as the metric completion of a connected component of $M \setminus T$ associated with one end. It is worth noting that  $M_{ext}$ is connected and AF, possesses a compact minimal boundary, and does not contain any other compact minimal surfaces (even immersed).

We have the following Riemannian Penrose inequality, which was proved by Huisken-Ilmanen \cite{HuiskenIlmanen01} for each connected component of $\partial M_{ext}$ and by Bray \cite{Bray01} for the total area.

\begin{thm}\label{Penrose-ineq}
	Let $(M^3, g)$ be a complete AF $3$-manifold with nonnegative scalar curvature and total mass $m(g)$. For an exterior region $M_{ext}$ associated with one end, the total area of its boundary is bounded from above by 
	$$
	\mathrm{Area}(\partial M_{ext}) \leq 16 \pi m(g)^2.
	$$ 
\end{thm}

Given an AF exterior region $(M_{ext}, g)$ associated with one end, let $\{x^j\}_{j=1}^3$ denote the given asymptotically flat coordinate functions in the end. Then there exist functions $u^j \in C^\infty(M_{ext})$, $j=1,2,3$, satisfying 
\begin{align}\label{harmonic}
\Delta _g u^j =0 \text{ in } M_{ext},\ \frac{\partial u^j}{\partial \nu }=0 \text{ on } \partial M_{ext},\ |u^j - x^j| = o(|x|^{1- \sigma }) \text{ as } |x|\to \infty,
\end{align}
where the Neumann boundary condition can be omitted if $\partial M_{ext}= \emptyset$. We refer to $u^j$ as a harmonic function asymptotic to an AF coordinate function $x^j$. Further details can be found in \cite{BKKS22, KKL21}.

The following mass inequality was proven in \cite{BKKS22}, whose proof was based on the techniques introduced by Stern in \cite{Stern22}.

\begin{thm}[Theorem 1.2 in \cite{BKKS22}]\label{integral-ineq}
	Let $(M_{ext}, g)$ be an AF exterior region with mass $m(g)$. Let $u$, satisfying (\ref{harmonic}), be a harmonic function on $(M_{ext}, g)$ asymptotic to one of the AF-coordinate functions of the end. Then 
	\begin{align}\label{mass-ineq}
	m(g) \geq \frac{1}{16\pi}\int_{{M_{ext}}} \left( \frac{|\nabla ^2 u|^2}{| \nabla u|} + R_g |\nabla u| \right) \mathrm{dvol}_g,
\end{align} 
	where the integral is taken over regular points of $u$.
\end{thm}

Also, by Proposition 3.1 and Lemma 3.2 in \cite{KKL21}, we have the following estimates for harmonic functions satisfying (\ref{harmonic}).

\begin{prop}\label{C^2-estimate}
	Let $(M, g)$ be an $(A, B, \sigma )$-AF $3$-manifold with $R_g \geq 0$ and mass $m(g) < m_0$. Assume $u$ is a harmonic function on $M_{ext}$ defined by (\ref{harmonic}), asymptotic to an AF coordinate function $x^j$ and satisfies the Neumann boundary condition on $\partial M_{ext}$. Then there exists a uniform big $r_0=r_0(A, B, \sigma )>0$ such that for any $r >r_0$ and small $\delta >0$, after the normalization such that the average of $u$ over $M_r \setminus M_{r_0}$ is zero, we have
	$$
	\sup_{M_{r}}|u| \leq C(r),$$
	where $C(r)$ is a uniform constant depending only on $(A,B,\sigma , m_0, r)$. 
	Moreover, on $M \setminus M_{r_0}$, we have
	$$
	|\nabla u - \partial _{x^j}|(x) \leq C(r_0)|x|^{-\sigma }.
	$$
\end{prop}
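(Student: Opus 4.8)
The plan is to prove the two assertions separately: the gradient estimate on the end is a purely local, asymptotically‑Euclidean statement coming from weighted elliptic theory, and the interior sup bound then follows from it together with the maximum principle and the prescribed normalization. The one genuinely delicate point is the uniformity of the constants, which I address at the end.

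For the decay on the end, fix $r_0\ge A$ large enough (depending only on $A,B,\sigma$) that $|\partial^k(g_{uv}-\delta_{uv})|\le B|x|^{-\sigma-|k|}$ makes $g$ as $C^2$‑close to $\delta$ as required on $\{|x|\ge r_0\}$, and set $w:=u-x^j$ there. Then $\Delta_g w=-\Delta_g x^j$ with $|\Delta_g x^j|\le C_0|x|^{-1-\sigma}$, while by construction $|w|=o(|x|^{1-\sigma})$. On a dyadic annulus $\{R<|x|<2R\}$ I would rescale coordinates by $R$: the rescaled metric is uniformly $C^{1,\alpha}$‑close to $\delta$, the rescaled right‑hand side is $O(R^{1-\sigma})$ in $C^{0,\alpha}$, and the rescaled $w$ has sup norm $o(R^{1-\sigma})$, so interior Schauder estimates on the annulus give $\sup_{\{R<|x|<2R\}}\big(R|\nabla w|\big)\le C\big(\sup_{\{R/2<|x|<4R\}}|w|+R^{2}\sup|\Delta_g x^j|\big)=O(R^{1-\sigma})$, that is $|\nabla u-\partial _{x^j}|(x)=|\nabla w|(x)\le C|x|^{-\sigma}$. (Using the maximum principle with the decay at infinity in place of an outer boundary condition one also sees $|w|\le C|x|^{1-\sigma}$; in the range $\sigma\in(\tfrac12,1]$ the function $w$ may genuinely grow, but its gradient still decays like $|x|^{-\sigma}$.) This is the content of \cite[Proposition 3.1]{KKL21} and of standard weighted elliptic estimates on AF manifolds.

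For the interior sup bound, observe that $u$ is harmonic on the compact manifold‑with‑boundary $M_{ext,r}$, whose boundary is $\Phi^{-1}(S_r)\cup\partial M_{ext}$, and that $\partial u/\partial v=0$ on the smooth minimal boundary $\partial M_{ext}$. By the strong maximum principle together with the Hopf boundary point lemma, a nonconstant $u$ cannot attain $\max|u|$ at a point of $\partial M_{ext}$, so $\sup_{M_{ext,r}}|u|=\max_{\Phi^{-1}(S_r)}|u|$, and it suffices to bound $|u|$ on $\Phi^{-1}(S_r)$, which lies on the end. The region $W_r:=M_r\setminus M_{r_0}=\Phi^{-1}(\{r_0<|x|<r\})$ is connected, so the normalization (the average of $u$ over $W_r$ vanishes) forces $u(x_0)=0$ for some $x_0\in W_r$ by the intermediate value theorem. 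On $\overline{W_r}\supset\Phi^{-1}(S_r)$ the decay estimate gives $|\nabla u|\le|\partial _{x^j}|_g+C(r_0)|x|^{-\sigma}\le C$, and any two points of $\overline{W_r}$ are joined by a path in $\{|x|\ge r_0\}$ of $g$‑length $\le Cr$ (the metric being uniformly comparable to $\delta$ there, the Euclidean diameter being $\lesssim r$); integrating $|\nabla u|$ along a path from $x_0$ gives $\sup_{\overline{W_r}}|u|\le Cr=:C(r)$, hence $\sup_{M_{ext,r}}|u|\le C(r)$. This is the argument of \cite[Lemma 3.2]{KKL21}.

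The step I expect to be the real work is the \emph{uniformity} of the constants $C(r_0)$ and $C(r)$ over the whole family of $(A,B,\sigma)$‑AF manifolds with $m(g)\le m_0$: the region $M_{ext}$, its trapped region and its minimal boundary all vary with the manifold, and the qualitative estimate $|w|=o(|x|^{1-\sigma})$ is not a priori quantitative. For the uniform family of metrics the weighted Schauder constants above depend only on $A,B,\sigma$, so the whole problem reduces to a single uniform bound on $\sup|u|$ over the fixed annulus $\{r_0\le|x|\le 2r_0\}$ once the additive freedom has been removed by the normalization; it is precisely this last bound where the global hypotheses enter, through the Penrose inequality $|\partial M_{ext}|\le 16\pi m(g)^{2}\le 16\pi m_0^{2}$ (Proposition \ref{Penrose-ineq}) and the mass inequality $\int_{M_{ext}}\big(|\nabla^2u|^2/|\nabla u|+R_g|\nabla u|\big)\,dV\le 16\pi m_0$ (Proposition \ref{integral-ineq}) — which keep the interior of $M_{ext}$ from degenerating — together with a normalized contradiction/compactness argument carried out on the fixed end chart $\{|x|>A\}$. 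Everything else is routine elliptic theory.
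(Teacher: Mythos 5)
Your proposal follows essentially the same route as the paper, which gives no independent proof here but simply imports the statement from Proposition 3.1 and Lemma 3.2 of \cite{KKL21}; your two steps (rescaled Schauder estimates on dyadic annuli for the decay of $\nabla(u-x^j)$, then the strong maximum principle with the Hopf lemma on $\partial M_{ext}$ together with the zero-average normalization and integration of the gradient bound to get $\sup_{M_{ext,r}}|u|\leq C(r)$) are exactly the content of those two cited results. The one caveat is the uniformity of the constants, which you rightly flag as the real work and defer to \cite{KKL21}; there it comes from the uniform $(A,B,\sigma)$ data and the Penrose bound on $|\partial M_{ext}|$ rather than the mass inequality of \cite{BKKS22}, so your sketch is consistent with, and no less complete than, what the paper itself provides.
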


\subsection{Space with Ricci curvature bounded from below}
Given a metric space $(Z, d)$ and two subsets $A, B \subset Z$, the Hausdorff distance $d_H(A, B)$ between $A$ and $B$ is defined as 
$$
d_H(A, B):= \inf \{\varepsilon : A \subset B_\varepsilon (B), B \subset B_\varepsilon (A)\} ,
$$ 
where $B_\varepsilon (A ):= \{ x \in Z: d(x, A) < \varepsilon \} $. 
For two metric spaces $X$ and $ Y$, an admissible metric on the disjoint union $X\sqcup Y $ is a metric that extends the given metrics on $X$ and $Y$. With this the pointed Gromov-Hausdorff distance is defined as $$
d_{GH}( (X,x), (Y,y) ) = \inf\{d_H(X,Y)+|xy|: \text{ admissible metrics on } X \sqcup Y\}.
$$ 

Alternatively, we can use the Gromov-Hausdorff approximation (GHA for short) to define the Gromov-Hausdorff distance. A map $f:X\to Y$ is called an $\varepsilon $-GHA if it satisfies the following conditions:
\begin{itemize}
	\item [(1)] $Y \subset B_\varepsilon (f(X) )$;
	\item[(2)] $|d(x_1, x_2) - d(f(x_1), f(x_2) )|<\varepsilon $, $\forall x_1, x_2 \in X$.
\end{itemize}
Define
$$
\hat{d}_{GH}( (X, x), (Y,y) ) = \inf \{\varepsilon : \exists \varepsilon\text{-GHA } f:X\to Y, f(x)=y\} .
$$ 
Then $\frac{1}{2}d_{GH} \leq \hat{d}_{GH} \leq 2 d_{GH}$ (see for example \cite{Rong06} for a proof).

For a compact metric space $X$, define the capacity and covering functions by 
$$
\mathrm{Cap}_X(\varepsilon ) = \text{ maximum number of disjoint }\frac{\varepsilon }{2}\text{-balls in }X,
$$ 
$$
\mathrm{Cov}_X(\varepsilon ) = \text{ minimum number of }\varepsilon \text{-balls it takes to cover }X.
$$ 
\begin{prop}[Gromov's compactness]
	For a class $\mathcal{C}$ of compact metric spaces with uniformly bounded diameter, the following statements are equivalent:

	1) $\mathcal{C}$ is precompact for the Gromov-Hausdorff topology;

	2) there is a function $N_1(\varepsilon ): (0, \alpha )\to (0, \infty)$ such that  $\mathrm{Cap}_X(\varepsilon ) \leq N_1(\varepsilon )$ for all $X \in \mathcal{C}$;

	3) there is a function $N_2(\varepsilon ): (0, \alpha )\to (0, \infty)$ such that $\mathrm{Cov}_X(\varepsilon ) \leq N_2(\varepsilon )$ for all $X \in \mathcal{C}$.
\end{prop}

See \cite[Section 11.1]{Petersen16} for a proof and more details of the above proposition. As corollaries, we have the following lemmas.

\begin{lem}\label{cpt-subset-converg}
	If $(X_i,d_i)$ are metric spaces with diameter bounded uniformly from above and $(X_i, d_i) \to (X, d)$ in the Gromov-Hausdorff topology, then for any compact subsets $\Omega _i \subset X_i$ with restricted metrics, up to a subsequence, $(\Omega _i, d_i) \to ( \Omega ,d) \subset (X, d)$ for some compact subset $\Omega \subset X$ in the Gromov-Hausdorff topology.
\end{lem}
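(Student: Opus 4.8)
The plan is to realize the Gromov--Hausdorff convergence $(X_i,d_i)\to(X,d)$ by isometric embeddings into a single compact ambient metric space, and then extract a Hausdorff-convergent subsequence of the subsets $\Omega_i$ inside that space. First, since each $X_i$ has diameter at most $D$ and $(X_i,d_i)\to(X,d)$, Gromov's compactness theorem gives a uniform covering bound $Cov_{X_i}(\varepsilon)\le N(\varepsilon)$ for all $i$ and all $\varepsilon>0$; in particular the family $\{X_i\}\cup\{X\}$ is uniformly totally bounded and $X$ is compact with $\diam(X)\le D$. By the standard realization of Gromov--Hausdorff limits (see e.g. \cite{Petersen06}), there is a compact metric space $(Z,d_Z)$ together with isometric embeddings $\iota_i\colon X_i\hookrightarrow Z$ and $\iota\colon X\hookrightarrow Z$ such that $d_H^Z(\iota_i(X_i),\iota(X))\to 0$, where $d_H^Z$ is the Hausdorff distance in $Z$. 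From now on identify each $X_i$ and $X$ with their images, so $X_i,X\subset Z$ and $X_i\to X$ in $d_H^Z$.

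Next, each $\Omega_i$ is a nonempty closed, hence compact, subset of the compact space $Z$. By the Blaschke selection theorem --- equivalently, compactness of the hyperspace of closed subsets of a compact metric space under the Hausdorff distance --- after passing to a subsequence we may assume $\Omega_i\to\Omega$ in $d_H^Z$ for some nonempty compact $\Omega\subset Z$. I claim $\Omega\subset X$: given $y\in\Omega$, choose $y_i\in\Omega_i\subset X_i$ with $d_Z(y_i,y)\to 0$; since $d_H^Z(X_i,X)\to 0$ there are $x_i\in X$ with $d_Z(y_i,x_i)\to 0$, hence $x_i\to y$, and since $X$ is closed in $Z$ we get $y\in X$. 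Therefore $(\Omega,d)=(\Omega,d_Z|_\Omega)$ is a compact subset of $(X,d)$.

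Finally, $\Omega_i\to\Omega$ in the Hausdorff distance inside the common space $Z$, and this implies $(\Omega_i,d_i)\to(\Omega,d)$ in the Gromov--Hausdorff topology: restricting the identifications to $\Omega_i$ and using that $\Omega_i$ and $\Omega$ are $\varepsilon_i$-close in $Z$ produces the required Gromov--Hausdorff approximations. This completes the proof.

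The only genuinely nontrivial ingredient is the realization theorem packaging Gromov--Hausdorff convergence as Hausdorff convergence in a fixed compact space; the remaining steps are the Blaschke selection principle and the closedness of $X$ under Hausdorff limits. Alternatively, one may avoid the ambient space entirely: the inclusions $\Omega_i\subset X_i$ give $Cov_{\Omega_i}(\varepsilon)\le Cov_{X_i}(\varepsilon)\le N(\varepsilon)$, so $\{(\Omega_i,d_i)\}$ is Gromov--Hausdorff precompact, and a diagonal argument applied to Gromov--Hausdorff approximations $X_i\to X$ identifies the limit of any convergent subsequence of $(\Omega_i,d_i)$ with a compact subset of $(X,d)$; I expect this direct route to be the slightly more delicate of the two, which is why the common-space argument above is preferable.
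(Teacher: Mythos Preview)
Your argument is correct. The paper does not actually supply a proof of this lemma; it is stated as an immediate corollary of Gromov's compactness theorem (with a reference to \cite{Petersen06}) and left without further details. Your ambient-space argument via Blaschke selection is the standard way to unpack this corollary, and it is complete as written.
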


\begin{lem}\label{precomp-ricci}
	The collection of complete pointed Riemannian $n$-manifolds with Ricci curvature bounded uniformly from below is precompact in the pointed Gromov-Hausdorff topology.
\end{lem}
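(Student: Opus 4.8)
The plan is to deduce this from the Gromov compactness criterion recorded above, the only geometric input being the Bishop--Gromov volume comparison theorem. Recall that if $(M^n,g)$ satisfies $\Ric_g \geq -(n-1)\Lambda$, then for every $p \in M$ the quotient $r \mapsto \Vol B(p,r)/v_\Lambda(r)$ is non-increasing, where $v_\Lambda(r)$ denotes the volume of a metric $r$-ball in the simply connected $n$-dimensional space form of constant curvature $-\Lambda$; in particular $\Vol B(p,2r) \leq \frac{v_\Lambda(2r)}{v_\Lambda(r)}\,\Vol B(p,r)$, a doubling estimate with constant depending only on $n$, $\Lambda$ and $r$.

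The key step is to bound, uniformly over the whole collection, the covering number of metric balls of a fixed radius. Fix $R > \varepsilon > 0$ and a member $(M^n,g,p)$, and let $\{x_1,\dots,x_N\} \subset \overline{B}(p,R)$ be a maximal $\varepsilon$-separated subset, so that the balls $B(x_i,\varepsilon/2)$ are pairwise disjoint and contained in $B(p,2R)$. Since $d(p,x_i)\leq R$ we have $B(p,R)\subset B(x_i,2R)$, so Bishop--Gromov gives
$$
\Vol B(x_i,\tfrac{\varepsilon}{2}) \;\geq\; \frac{v_\Lambda(\varepsilon/2)}{v_\Lambda(2R)}\,\Vol B(x_i,2R) \;\geq\; \frac{v_\Lambda(\varepsilon/2)}{v_\Lambda(2R)}\,\Vol B(p,R).
$$
Summing over the disjoint balls and applying the doubling estimate to $\Vol B(p,2R)$ yields $N\cdot \frac{v_\Lambda(\varepsilon/2)}{v_\Lambda(2R)}\,\Vol B(p,R) \leq \Vol B(p,2R) \leq \frac{v_\Lambda(2R)}{v_\Lambda(R)}\,\Vol B(p,R)$, and cancelling $\Vol B(p,R)$ gives $N \leq v_\Lambda(2R)^2/(v_\Lambda(R)\,v_\Lambda(\varepsilon/2)) =: N(n,\Lambda,R,\varepsilon)$. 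Since a maximal $\varepsilon$-separated set is also an $\varepsilon$-net, $Cov_{\overline{B}(p,R)}(\varepsilon) \leq N(n,\Lambda,R,\varepsilon)$, a bound independent of the particular manifold.

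It then remains to assemble a pointed limit. For each fixed $R$ the closed balls $\overline{B}(p_i,R)$ are compact by Hopf--Rinow, have diameter $\leq 2R$, and satisfy the uniform covering bound just established, so by Gromov's compactness criterion they are precompact in the Gromov--Hausdorff topology. Running this for $R = 1,2,3,\dots$, passing to a diagonal subsequence, and keeping track of the base points, one obtains mutually compatible pointed limits $(\overline{B}(p_i,R),p_i) \to (Y_R,y_R)$, and $(Y,y) := (\overline{\bigcup_R Y_R},\, y_1)$ is the pointed Gromov--Hausdorff limit of the corresponding subsequence of $(M_i,g_i,p_i)$. The volume-comparison estimate is routine; the point that needs care is precisely this last assembly step --- verifying that the limits $Y_R$ are consistent for different $R$ and that Gromov--Hausdorff convergence of the closed balls upgrades to pointed Gromov--Hausdorff convergence of the ambient spaces, which requires handling the mild mismatch between $\overline{B}(p_i,R)$ and the metric $R$-ball of the limit. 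Full details are in \cite{Petersen06}.
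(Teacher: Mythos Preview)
Your argument is correct and is the standard one: bound the covering number of balls via Bishop--Gromov, then apply Gromov's compactness criterion and a diagonal argument to pass to a pointed limit. Note that the paper itself does not prove this lemma; it simply records it as a corollary of Gromov's compactness proposition and refers the reader to \cite{Petersen06}, so there is nothing to compare --- your proof is precisely the kind of argument the paper is deferring to.
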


For Riemannian manifolds with Ricci curvature bounded from below, we have the Bishop-Gromov's volume comparison theorem (see \cite[Section 7.1]{Petersen16} for a proof).
\begin{thm}
	Let $(M,g,p)$ be a complete Riemannian $n$-manifold with $\Ric_g \geq -(n-1)\Lambda g$ for some $\Lambda \geq 0$. Then for any $0<r_1<r_2$, we have
	$$
	\frac{\mathrm{Vol}_g B(p, r_1)}{|B_{-\Lambda }(r_1)|}\geq \frac{\mathrm{Vol}_g B(p, r_2)}{|B_{-\Lambda }(r_2)|},
	$$ where $|B_{-\Lambda }(r)|$ is the volume of a geodesic ball with radius $r$ in the space form $S_{-\Lambda }^n$ with constant curvature $-\Lambda $. Moreover, for $v \in T_p M$ a unit tangent vector and $\gamma _v(t), t \in [0,l]$ a minimal geodesic segment with $\gamma _v(0)=p, \dot\gamma (0) = v$, and for any $0 < t_1< t_2<l$, we have
\begin{align}\label{bishop-gromov}
	\frac{\mathcal{A}(t_1v)}{\underline{\mathcal{A}}(t_1)} \geq \frac{\mathcal{A}(t_2v)}{\underline{\mathcal{A}}(t_2)},
\end{align} 
	where $\mathcal{A}(tv)$ is the area element of $\partial B(p, t)$ in geodesic polar coordinates at $\gamma _v(t)$ and $\underline{\mathcal{A}}(t)$ is the area element of $\partial B(t)$ in $S_{-\Lambda }^n$. 
\end{thm}

By the Bishop-Gromov's volume comparison theorem, we have the following lemma. Our argument follows from Lemma 3.7 in \cite{Hein10}. We give full details here for the reader's convenience. 

\begin{lem}\label{vol-area}
	Assume $(M^n, g)$ is a complete Riemannian manifold with Ricci curvature bounded from below by $\Ric_g \geq -(n-1)\Lambda $, and $\Sigma^{n-1}\subset M$ is a smooth hypersurface. Given constants $D>1, \delta _1>0$, there exists a uniform constant $C= C(D, \delta _1, \Lambda , n)>0$ such that the following holds: for any point $p \in M$ and any fixed point $ x \in B(p, D)$ with $d(x, \Sigma) > 2\delta _1$, and for any subset $A \subset B(p, D)$, if for all points $z \in A$, there exists a geodesic segment $\gamma _{xz}$ between $x$ and $z$ such that $\gamma _{xz} \cap \Sigma \neq \emptyset$, then 
	$$
	\mathrm{Vol}(A) \leq C \cdot \mathrm{Area}(\Sigma \cap B(p, 2D) ).
	$$ 

\end{lem}
\begin{proof}
By assumptions, for any point $z \in A$, there exists a geodesic segment $\gamma _{xz}$ such that $\gamma _{xz}(0)= x$ and $\gamma _{xz} \cap \Sigma \neq \emptyset$. 
	Let $\Sigma^1 \subset \Sigma$ be the subset of all points $y \in \Sigma$ which occur as the first intersection with $\Sigma$ of $\gamma _{xz}$ for all $z \in A$. Define $d_1, d_2: \Sigma^1\to \mathbb{R}^+$ by $d_1(y)=d(x,y)$ and $d_2(y)= \sup \{t>0: \gamma _{xy}(d_1(y)+t) \in A\} $, where we use $\gamma _{xy}$ to denote the geodesic $\gamma _{xz}$ containing points $x$ and $y$.  Also define $$U:= \{(y, t) \in \Sigma^1 \times \mathbb{R}: d_1(y)<t \leq d_1(y)+d_2(y)\} ,$$ and $\Phi : U\to M$ by $\Phi (y,t)= \gamma _{xy}(t)$. 

	Then $\delta _1\leq  d_1(y) \leq d_1(y)+ d_2(y) \leq 2D$ and
\begin{align}\label{Phi^*}
	\Phi ^*\left( d\Vol _g \right) |_{(y,t)}= \frac{\mathcal{A}(tv_y)}{\mathcal{A}(d_1(y) v_y)}\cos \alpha _y dA_{\Sigma}\wedge dt,
\end{align}
where $v_y=\dot\gamma _{xy}(0)$ and $\alpha _y$ is the angle between $\dot\gamma _{xy}(d_1(y) )$ and the exterior normal to $\Sigma$ at $y$. Integrating (\ref{Phi^*}) over $U$ gives
	\begin{align*}
		\Vol(A) & \leq \int_{\Sigma^1}\int_{d_1(y)}^{d_1(y)+d_2(y)} \frac{\mathcal{A}(tv_y)}{\mathcal{A}(d_1(y)v_y)}dt dA(y)\\
			 & \leq \int_{\Sigma^1}\int_{d_1(y)}^{d_1(y)+d_2(y)} \frac{\underline{\mathcal{A}}(t)}{\underline{\mathcal{A}}(d_1(y) )}dt dA(y)\\
			 & \leq C(D, \delta _1, \Lambda , n) \mathrm{Area}(\Sigma \cap B(p, 2D) ).
	\end{align*}
where we used the Bishop-Gromov's volume comparison inequality (\ref{bishop-gromov}) in the above second inequality.
\end{proof}

As a corollary, we have the following perturbation lemma. 
\begin{lem}\label{nbhd-point}
	Under the same assumptions as in the above lemma, and assume further that  $\mathrm{Vol}_g( B(p, 1) ) \geq v_0> 0$. 
	If $\mathrm{Area}(\Sigma \cap B(p, 2D) )\leq \varepsilon $,
	then there exists $\delta _0=\Psi (\varepsilon | n, v_0, \Lambda , D, \delta _1)>0 $ such that the following holds: for any points $x, w \in B(p, D)$ with $d(x, \Sigma) > 2\delta _1$, there exists a point $z \in B(w, \delta_0 )$ such that for any geodesic segment $\gamma _{x z}$ connecting $x$ and $z$, $\gamma _{x z} \cap \Sigma = \emptyset.$

\end{lem}
\begin{proof}
	Otherwise, for any point $z \in B(w, \delta _0)$, there exists a geodesic segment $\gamma _{x z}$ such that $\gamma _{x z} \cap \Sigma \neq \emptyset$. By Lemma \ref{vol-area}, $$\mathrm{Vol}( B(w, \delta _0) ) \leq C \cdot \mathrm{Area}(\Sigma \cap B(p, 2D) )$$ for some uniform constant $C>0$. However, by the Bishop-Gromov's volume comparison theorem, 
	$$
	\Vol( B(w, \delta _0) ) \geq \frac{| B_{-\Lambda }(\delta _0) |}{|B_{-\Lambda }(3D)|}\cdot \Vol( B(p, D) ) \geq \frac{| B_{-\Lambda }(\delta _0)|}{| B_{-\Lambda }(3D)|}\cdot v_0.
	$$
	Choosing $\delta _0$ such that $\frac{| B_{-\Lambda }(\delta _0)|}{| B_{-\Lambda }(3D)|}\cdot v_0 = 2C \varepsilon $ gives a contradiction.
\end{proof}
\begin{rmk}
	Under the same notations as in the above lemma, if we denote the subset in $B(w, \delta _0)$ consisting of points which could be connected to $x$ by segments in $M\setminus \Sigma$ as $\hat{A}$, then by Lemma \ref{vol-area}, we know that $\Vol(\hat{A}) \geq (1- \sqrt{\varepsilon } )\Vol( B(w, \delta _0) )$ when $\varepsilon \ll 1$. Therefore, for any uniform positive number $N$ and points $x_1, \cdots, x_N$ with $d(x_j, \Sigma) > 2 \delta _1$ for any $j$, if $\varepsilon $ is small enough depending on $N$, by the same arguments, there exists $z \in B(w, \delta _0)$ such that each geodesic segment connecting $z$ and $ x_j$ lies in $M\setminus \Sigma$.
\end{rmk}

We also require the structure theory of the Ricci limit space. The following proposition corresponds to a specific case of Theorem 1.3 in \cite{Deng20}.

\begin{prop}\label{non-branching}
	If $(M_i, g_i, p_i)$ is a sequence of Riemannian manifolds with Ricci curvature bounded uniformly from below, and $(M_i, g_i, p_i)$ converge to $(X, d, p)$ in the pointed Gromov-Hausdorff topology, then $(X,d)$ is non-branching. In other words, if there are two geodesics $\gamma _1$ and $\gamma _2$ in $X$, and for some $t>0$, $\gamma _1(s)=\gamma _2(s)$ for $ s \in [0,t]$, then $\gamma _1=\gamma _2$.
\end{prop}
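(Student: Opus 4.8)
The plan is to run an open–closed argument on the set where the two geodesics coincide, reduce the statement to a local rigidity property of tangent cones along minimizing geodesics in Ricci limit spaces, and then feed that property the deep quantitative input of Colding–Naber on which \cite{deng2020h} is built. Since non-branching is a local, scale-invariant property, after rescaling and moving base points we may assume throughout that $(X,d,p)$ is a pointed Gromov–Hausdorff limit of unit balls $B(p_i,1)\subset M_i$ with a fixed lower Ricci bound, which is the setting in which the cited structure theory operates.

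Concretely, let $\gamma_1,\gamma_2\colon[0,L]\to X$ be unit-speed minimizing geodesics with $\gamma_1|_{[0,t]}=\gamma_2|_{[0,t]}$ for some $t>0$, and set
$$
A=\{\,s\in[0,L]:\gamma_1|_{[0,s]}=\gamma_2|_{[0,s]}\,\}.
$$
Then $A$ is closed and contains $[0,t]$, so $A=[0,s^{*}]$ with $s^{*}\geq t>0$; if $s^{*}=L$ we are done, so suppose $s^{*}<L$ and set $q=\gamma_1(s^{*})=\gamma_2(s^{*})$. Since $0<s^{*}<L$, the point $q$ is an interior point of each minimizing geodesic $\gamma_k$, so blowing up $\gamma_k$ at $q$ produces a full line $\ell_k$ in every tangent cone $T_qX$; by the Cheeger–Colding splitting theorem (applicable because tangent cones of Ricci limits are again Ricci limits), $T_qX$ splits isometrically as $\mathbb{R}\times Z_k$ with the $\mathbb{R}$-factor along $\ell_k$. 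Because $\gamma_1$ and $\gamma_2$ agree on $[0,s^{*}]$, the blow-ups of the incoming halves $\gamma_k|_{[0,s^{*}]}$ coincide; both $\ell_1$ and $\ell_2$ are lines through the vertex of $T_qX$, hence lie in the maximal Euclidean factor, and are therefore determined by their common incoming half-line, so $\ell_1=\ell_2$. Thus $\gamma_1$ and $\gamma_2$ have the same blow-up — the same infinitesimal direction — at $q$.

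The remaining step, and the \textbf{main obstacle}, is to upgrade ``same infinitesimal direction at $q$'' to ``$\gamma_1=\gamma_2$ on $[s^{*},s^{*}+\varepsilon]$ for some $\varepsilon>0$'', which contradicts the maximality of $s^{*}$. This cannot be done by soft compactness: tangent cones along a geodesic in a Ricci limit space need not be Euclidean, nor even locally constant, so agreement of the vertex-lines at one point carries no automatic dynamical consequence. The way through is Colding–Naber's quantitative analysis — the parabolic (heat-kernel) approximation of the distance functions to $\gamma_k(0)$ and $\gamma_k(L)$ along the geodesic, together with the time-dependent segment inequality — which yields that the tangent cones $T_{\gamma_k(s)}X$ depend on $s$ in an $\alpha$-Hölder manner on compact subintervals of $(0,L)$, with estimates uniform in $k$; combined with the coincidence of splitting directions at $q$ just established, one propagates the agreement of the two geodesics a definite amount past $q$. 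Packaging this propagation is precisely Theorem~1.3 of \cite{deng2020h}, which we invoke directly; the scheme above is meant to record how the statement is organized and to locate where the genuine analytic input is unavoidable.
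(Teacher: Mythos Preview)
The paper does not prove this proposition at all: it is stated as a special case of Theorem~1.3 in \cite{deng2020h} and simply cited. Your proposal, despite the surrounding discussion, does exactly the same thing in the end---you explicitly write ``Packaging this propagation is precisely Theorem~1.3 of \cite{deng2020h}, which we invoke directly.'' So at the level of what is actually being claimed as a proof, you and the paper agree: cite Deng.

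The expository material you add (the open--closed argument, the tangent-cone splitting to conclude $\ell_1=\ell_2$) is correct as motivation and correctly identifies where the real difficulty sits, but it does not reduce the problem: as you yourself note, agreement of blow-up directions at $q$ does not by itself propagate, and the sentence invoking Colding--Naber H\"older continuity of tangent cones is heuristic rather than a proof step (that continuity is along a \emph{single} geodesic and does not immediately compare two distinct geodesics sharing a point). Since you then defer to \cite{deng2020h} anyway, nothing is lost---just be aware that the preamble is commentary, not an independent argument.
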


In dimension three,  the following theorem \cite[Corollary 1.5]{SimonTopping21} implies that any Ricci limit space is a manifold.

\begin{thm}\label{mfd-structure}
	If $(M_i^3, g_i, p_i)$ is a sequence of Riemannian $3$-manifolds with Ricci curvature bounded uniformly from below and satisfy the noncollapsing conditions that  $\Vol_{g_i} B(p_i, 1) \geq v_0$ for a uniform $v_0 >0$, and $(M_i, g_i, p_i)$ converge to $(X,d,p)$ in the pointed Gromov-Hausdorff topology, then there exists a topological $3$-manifold $M$ such that $M=X$ and the topology generated by $d$ is the same as $M$. Moreover, the charts of $M$ can be taken to be bi-H\"older with respect to $d$.
\end{thm}

\section{Effective estimates on outer region}\label{effective-estimate}
In this section, we always assume that $(M, g)$ is an exterior region of an orientable complete pointed AF $3$-manifold with nonnegative scalar curvature $R_g \geq 0$, and the mass $m(g)<1$. 
We define $\{u^j\}, j=1,2,3$ as harmonic functions asymptotic to the AF-coordinate functions $x^j$. Recall that these harmonic functions are defined over $M$ and satisfy the Neumann boundary condition on $\partial M$. For the rigorous definitions and a more detailed explanation, please refer to Section \ref{AE-harmonic-coord}.

Also we assume that $(M,g)$ is $(A,B, \sigma )$-AF and all constants used are assumed to depend on $(A,B, \sigma )$ but not on any thing else (unless explicitly stated otherwise).

Recall that $M_r$ is defined as the interior component of $M \setminus \Phi ^{-1}(S_r)$, where $\Phi $ is the $C^\infty$-diffeomorphism from the end into $\mathbb{R}^3$ and $S_r \subset \mathbb{R}^3$ is the standard sphere with radius $r$.

\begin{lem}\label{hessian estimate}
	There exists a sufficiently large uniform $r_0>0$ and a uniform constant $C>0$ such that for any point $x \in M\setminus M_{r_0}$, 
	$$
	|\nabla ^2 u^j|(x) \leq C\cdot  m(g)^{\frac{1}{20}}= \Psi (m(g)) \text{ for }j\in \{1,2,3 \}.
	$$ 
\end{lem}
\begin{proof}
Fix a $j \in \{1,2,3\}$ and denote $u^j$ by $u$ for simplicity.
	For a sufficiently large uniform $r_0 >0$ and any point $x \in M\setminus M_{r_0}$, we know $$B(x,2) \subset \Phi ^{-1}(\mathbb{R}^3\setminus B(0,A) ),$$
	where the metric $g$ is $C^2$-close to the Euclidean metric, i.e. for any point $ y \in B(x,2)$, and for all multi-index $|l|=0,1,2$,
	$$|\partial^{l } (g_{ik}-\delta _{ik})|(y) \leq B|y|^{-\sigma- |l| }\leq 2B r_0^{-\sigma }\ll 1.$$
By Proposition \ref{C^2-estimate}, we have the gradient estimate that
$$
\sup_{B(x,1)}|\partial u|\leq C.
$$ 
By the Schauder estimate, $|\partial ^2 u| \leq C$ over $B(x,1)$, and thus 
\begin{align}\label{gradient+hessian}
	|\nabla u|(y) + |\nabla ^2 u|(y) \leq C, \ \forall y \in B(x, 1).
\end{align}
Notice that 
\begin{align}\label{bochner}
	\begin{split}
		\frac{1}{2}\Delta |\nabla ^2 u| ^2 &= |\nabla ^3 u|^2 + \nabla _j \nabla _i u \cdot  \nabla _k(R(\partial _{x^k}, \partial _{x^j})(\nabla _i u) )\\
						   &\ \ + \nabla _j \nabla _i u \cdot \nabla _j( R(\partial _{x^k}, \partial _{x^i})(\nabla _k u) )   + R_{lkij} \cdot \nabla _j \nabla _i u\cdot \nabla_k \nabla _l u.
	\end{split}
	\end{align} 
	Now we can apply the Moser iteration to control $|\nabla ^2 u|$ by its $L^2$-integral as follows. For any cut-off function $\varphi $ with support in $B(x,1)$, by using the identity (\ref{bochner}) and integration by parts, we have
	\begin{align}\label{bochner-integral}
		\begin{split}
			-\frac{1}{2}\int_{B(x, 1)}\nabla \varphi \cdot \nabla & |\nabla ^2 u|^2 \mathrm{dvol}_g \geq\\
	&\ \ - \int_{B(x,1)}\left( \varphi |\nabla u|^2 + |\nabla \varphi | |\nabla ^2 u| |\nabla u| + \varphi |\nabla ^2 u|^2 \right)\mathrm{dvol}_g ,
\end{split}
\end{align}
	where we used the fact that $|Rm| \ll 1$ over $B(x,1)$. 

	Take a cutoff function $\eta  $ with support in $B(x, 1)$ and a constant $\beta \geq 2$, which will be determined below. By choosing $\varphi =\eta ^2 |\nabla ^2 u| ^{2(\beta -1)}$ in (\ref{bochner-integral}), we have
	\begin{align*}
		(\beta& -1)\int_{}\eta ^2 |\nabla ^2 u|^{2(\beta -2)}|\nabla |\nabla ^2 u|^2|^2 + \int_{}2\eta |\nabla ^2u|^{2(\beta -1)}\nabla \eta \cdot \nabla |\nabla ^2 u|^2 \\
		      & \leq \int_{} \eta ^2 |\nabla ^2 u|^{2(\beta-1) }|\nabla u|^2 + \int_{} \eta |\nabla \eta | |\nabla ^2 u|^{2(\beta -1)}\cdot |\nabla ^2 u| |\nabla u|\\
		      &\ \ \ \  + \int_{}(\beta -1)\eta ^2 |\nabla ^2 u|^{2(\beta -2)} |\nabla |\nabla ^2 u|^2 |\cdot  |\nabla ^2 u| |\nabla u| + \int_{}\eta ^2 |\nabla ^2 u|^{2 \beta }.
	\end{align*}
	By the H\"older inequality and (\ref{gradient+hessian}), we have
	\begin{align*}
		\begin{split}
		(\beta -1)\int_{}\eta ^2 |\nabla ^2u|^{2(\beta -2)}| \nabla |\nabla ^2 u|^{2}|^2& \leq \frac{C}{\beta -1}\int_{}|\nabla \eta |^2 |\nabla ^2 u|^{2 \beta }+  \int_{}\eta ^2 |\nabla ^2 u|^{2 \beta }\\
												&\ \ + C \int_{}(\beta \eta ^2 |\nabla ^2 u|^{2(\beta -1)}+\eta |\nabla \eta | | \nabla ^2 u|^{2\beta -1}).
	\end{split}
	\end{align*}
	Set $v=|\nabla ^2u|^2$, then we have $v \leq C$ by (\ref{gradient+hessian}). So 
\begin{align*}
	\int_{}| \nabla (\eta v^{\frac{\beta}{2}})|^2& \leq C \int_{}|\nabla \eta |^2 v^{\beta }+ C \beta \int_{}(\eta v^{\frac{\beta }{2}})^2 + C \beta ^2 \int_{}\eta ^2v^{\beta -1}+ C \beta \int_{} \eta |\nabla \eta | v^{\beta -\frac{1}{2}}\\
						     &\leq C \beta ^2 \int_{}(\eta ^2+ \eta |\nabla \eta | + |\nabla \eta |^2) v^{\beta -1}.
\end{align*}
By the Sobolev inequality, 
\begin{align}\label{sobolev}
	\| \eta v^{\frac{\beta }{2}}\|_{L^6}^2 \leq C\cdot \| \eta v^{\frac{\beta }{2}}\|_{W^{1,2}}^2 \leq C\beta ^2 \int_{}( \eta ^2+ \eta |\nabla \eta |+ |\nabla \eta |^2 ) v^{\beta -1}.
\end{align}

Let $\eta \geq 0$ be a cutoff function satisfying $\eta (s) =1$ for $ s \leq 0$, $\eta (s) =0$ for $ s \geq 1$, and $|\eta '| \leq 2$. For any positive integer $k \geq 1$, set $r_k = \frac{1}{2}+ \frac{1}{2^{k+1}}$, $\Omega _k= B(x,r_k)$, and $$
\eta _k(y) = \eta \left( \frac{d(x,y) - r_k}{r_{k-1}-r_k} \right) .
$$ 
Then (\ref{sobolev}) implies that
\begin{align}
	\begin{split}
	\|v\|_{L^{3 \beta }(\Omega _k)}& \leq \|\eta _k v ^{\frac{\beta }{2}}\|_{L^6(\Omega _{k-1})}^{\frac{2}{\beta }}\\
				       &\leq ( C \beta ^2 4^{k})^{\frac{1}{\beta }}\left(\int_{\Omega _{k-1}}v^{\beta -1}\right) ^{\frac{1}{\beta }}\\
				       & \leq (C|B(x,1)|^{\frac{1}{\beta }}\beta ^2 4^{k})^{\frac{1}{\beta }}\| v\|_{L^\beta (\Omega _{k-1})}^{1-\frac{1}{\beta }}\\
				       & \leq (C \beta ^2 4^k)^{\frac{1}{\beta }}\|v\|_{L^\beta(\Omega _{k-1}) }^{1-\frac{1}{\beta }}.
\end{split}
\end{align}
Taking $\beta =2\cdot 3^{k-1}$ gives
\begin{align}\label{moser}
\begin{split}
	\|v\|_{L^{2 \cdot 3^k}(\Omega _k)} &\leq C ^{\frac{k}{3^k}}\|v\|_{L^{2\cdot 3^{k-1}}(\Omega _{k-1})}^{1-\frac{1}{2\cdot 3^{k-1}}}\\
					   & \leq C ^{\frac{k}{3^k}+ \frac{k-1}{3^{k-1}}(1-\frac{1}{2\cdot 3^{k-1}})}\|v\|_{L^{2\cdot 3^{k-2}}(\Omega _{k-2})}^{(1-\frac{1}{2\cdot 3^{k-1}})(1-\frac{1}{2\cdot 3^{k-2}})}\\
					   & \leq C ^{\sum_{j=1}^{k}\frac{j}{3^j}}\|v\|_{L^2(B(x,1) )}^{\prod _{j=0}^{k-1}(1-\frac{1}{2\cdot 3^{j}})}.
\end{split}
\end{align}
Notice that $$\sum_{j=1}^{\infty} \frac{j}{3^j} < \sum_{j=1}^{\infty}\frac{2^j}{3^j}<1,$$ $$\prod _{j=0}^{\infty} \left( 1- \frac{1}{2\cdot 3^j} \right) > \frac{1}{2}\cdot \frac{5}{6}\cdot \prod _{j=2}^{\infty}\left( 1- \frac{1}{j^2} \right) > \frac{1}{2}\cdot \frac{5}{6}\cdot \frac{1}{2}> \frac{1}{5}.$$ 
So (\ref{moser}) implies that
$$
\| v \|_{L^{2\cdot 3^{k}}(B(x, \frac{1}{2}))} \leq C \| v \|_{L^2(B(x,1) )}^{\frac{1}{5}}.
$$ 

Since
\begin{align*}
	\|v\|_{L^2(B(x,1) )}^2 &=
	\int_{B(x,1)}|\nabla ^2u|^4 \\
			       & \leq C \int_{B(x,1)}|\nabla ^2 u| \\
				    &\leq C \left( \int_{B(x,1)}\frac{|\nabla ^2 u|^2}{|\nabla u|} \right) ^{\frac{1}{2}}\left( \int_{B(x,1)}|\nabla u| \right) ^{\frac{1}{2}}\\
				    &\leq C\cdot m(g)^{\frac{1}{2}},
\end{align*}
by taking $k\to \infty$, we have
$$
\|v\|_{L^\infty(B(x,\frac{1}{2}) )} \leq C m(g)^{\frac{1}{20}}= \Psi (m(g)).
$$ 
\end{proof}

\begin{rmk}
	Notice that there is a ``loss of power'' in the Moser iteration argument used here, which is slightly different from the standard one. The reason for this can be seen from the following simple example: if $v \geq 0$ satisfies $- \Delta v \leq v + 1$ on the ball $B_1 \subset \mathbb{R}^n$, and $\| v\|_{C^0(B_1)} \leq C_1$, then we have $\|v\|_{C^0(B_{\frac{1}{2}})} \leq C(C_1) \|v\|_{L^2(B_1)}^{\frac{1}{5}}$. The power $\frac{1}{5}$ is not optimal. 
\end{rmk}

\begin{lem}\label{onb-est}
	There exists a sufficiently large uniform $r_0>0$ and a uniform constant $C>0$ such that for any point $x \in M \setminus M_{r_0}$, 
$$
|\left< \nabla u^j, \nabla u^k \right>-\delta _{jk}|(x) \leq Cm(g)^{\frac{1}{80}}= \Psi (m(g)) \text{ for } j, k \in \{1,2,3\} .
$$ 
\end{lem}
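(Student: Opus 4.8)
The plan is to reduce the estimate to two facts already available on the outer region $M\setminus M_{r_0}$: the pointwise Hessian decay $|\nabla^2u^j|\le Cm(g)^{5/96}$ of Lemma \ref{hessian estimate}, and the asymptotic expansion $|\nabla u^j-\partial_{x^j}|(x)\le C(r_0)|x|^{-\sigma}$ of Proposition \ref{C^2-estimate}. First I would record that on $M\setminus M_{r_0}$ one has $|\nabla u^j|\le C$: indeed $|\partial_{x^j}|_g^2=g_{jj}\le 1+Br_0^{-\sigma}$ and the expansion controls $|\nabla u^j-\partial_{x^j}|_g$, so in particular
\[
\bigl|\nabla\langle\nabla u^j,\nabla u^k\rangle\bigr|\le|\nabla^2u^j|\,|\nabla u^k|+|\nabla u^j|\,|\nabla^2u^k|\le Cm(g)^{5/96}
\]
everywhere on $M\setminus M_{r_0}$. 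Second, for a point $x$ with $|\Phi(x)|=R$, combining the expansion with $|g_{jk}-\delta_{jk}|(x)\le B R^{-\sigma}$ and $\langle\nabla u^j,\nabla u^k\rangle_g(x)=g_{jk}(x)+O(R^{-\sigma})$ gives directly $|\langle\nabla u^j,\nabla u^k\rangle-\delta_{jk}|(x)\le CR^{-\sigma}$.

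The point of the proof is to upgrade this last bound, which is useless near the inner radius $|\Phi(x)|\approx r_0$ (there it is merely a fixed constant), into a bound of size $\Psi(m(g))$ by integrating the Hessian. Fix the intermediate radius $R_1:=m(g)^{-1/96}$. For $x\in M\setminus M_{r_0}$ with $r_0\le|\Phi(x)|\le R_1$, join $x$ to $\tilde x:=\Phi^{-1}\bigl(R_1\,\Phi(x)/|\Phi(x)|\bigr)$ by the radial curve $t\mapsto\Phi^{-1}\bigl(t\,\Phi(x)/|\Phi(x)|\bigr)$, $t\in[|\Phi(x)|,R_1]$; since $g$ is $C^0$-close to $g_E$ on $\Phi^{-1}(\R^3\setminus B(0,r_0))$, this curve lies in $M\setminus M_{r_0}$ and has $g$-length at most $2R_1$. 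Integrating the gradient bound of the first paragraph along it and applying the second paragraph at $\tilde x$,
\[
\bigl|\langle\nabla u^j,\nabla u^k\rangle-\delta_{jk}\bigr|(x)\le 2R_1\cdot Cm(g)^{5/96}+CR_1^{-\sigma}=Cm(g)^{4/96}+Cm(g)^{\sigma/96}.
\]
Because $\sigma>\tfrac12$, both exponents exceed $\tfrac1{192}$, so (using $m(g)<1$) the right-hand side is $\le Cm(g)^{1/192}$; points with $|\Phi(x)|\ge R_1$ are handled by the second paragraph directly, so after enlarging $r_0$ if needed the claim follows.

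The main obstacle is exactly this propagation: Proposition \ref{C^2-estimate} only yields smallness at large distance, so one must balance the ``tail error'' $R_1^{-\sigma}$ against the ``integrated Hessian error'' $R_1\cdot m(g)^{5/96}$ by taking $R_1$ to be an appropriate negative power of the mass; the exponent $1/192$ in the statement is what survives this trade-off when $\sigma$ is close to $\tfrac12$. The only other things to check are routine consequences of uniform asymptotic flatness: that $M\setminus M_{r_0}=\Phi^{-1}(\R^3\setminus B(0,r_0))$ for $r_0$ large, so that both $|\nabla u^j|\le C$ and Lemma \ref{hessian estimate} apply there, and that the radial curves used above have $g$-length comparable to their Euclidean length.
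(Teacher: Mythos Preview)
Your proof is correct and follows essentially the same strategy as the paper: bound $\langle\nabla u^j,\nabla u^k\rangle-\delta_{jk}$ at a far-away reference point using the asymptotic decay $C|x|^{-\sigma}$, then propagate inward by integrating the gradient bound $|\nabla\langle\nabla u^j,\nabla u^k\rangle|\le Cm(g)^{5/96}$ along a curve of length $\sim m(g)^{-1/96}$, and balance the two errors. The paper's version differs only cosmetically: it integrates the derivative of the \emph{square} $(\langle\nabla u^j,\nabla u^k\rangle-\delta_{jk})^2$ along a segment of length $L=m(g)^{-1/96}$, obtaining $f^2\le CLm(g)^{5/96}+CL^{-2\sigma}\le Cm(g)^{1/96}$ and then taking a square root, whereas you integrate $\nabla f$ directly; both yield the same exponent $1/192$ after optimizing, and your route is arguably cleaner since it avoids the squaring step.
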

\begin{proof}
	Under the same notations as in the proof of Lemma \ref{hessian estimate}, by Proposition \ref{C^2-estimate},  we know that $ \forall x \in M\setminus M_{r_0}$, 
	$$
	|\nabla u^j - \partial x^j|(x) \leq C|x|^{-\sigma }.
	$$
	So
	\begin{align}\label{inner-prod}
		\begin{split}
			|\left< \nabla u^j \right. &,\left. \nabla u^k \right>- \delta _{jk}|\\
		&\leq |\left< \nabla u^j- \partial x^j, \nabla u^k \right> + \left<\partial x^j, \nabla u^k - \partial x^k \right>+ \left<\partial x^j, \partial x^k \right>- \delta _{jk}|\\
								      &\leq C |x|^{-\sigma }.
	\end{split}
        \end{align} 
	
	Choose $L = L(m(g) )>0$ to be sufficiently large, which will be determined below based on $m(g)$. 

	If $x \in M\setminus M_L$, then by (\ref{inner-prod}), we have
	\begin{align}\label{M_L}
	|\left<\nabla u^j, \nabla u^k \right>-\delta _{jk}|(x) \leq C L^{-\sigma }.
\end{align}

If $x \in M_L\setminus M_{r_0}$, then we can choose $y \in \partial M_{L}$ such that $|xy|\leq  L-r_0$ and the line segment $[xy] \subset M_L\setminus M_{r_0}$. So by Lemma \ref{hessian estimate} and (\ref{M_L}), we know
\begin{align}\label{M_r0}
	\begin{split}
	|\left<\nabla u^j, \nabla u^k \right>-\delta _{jk}|^2 (x) & \leq  |(\left<\nabla u^j, \nabla u^k \right>-\delta _{jk})^2 (x) -|\left<\nabla u^j, \nabla u^k \right>-\delta _{jk}|^2 (y)|\\
								  &\ \ + (\left<\nabla u^j, \nabla u^k \right>-\delta _{jk})^2 (y)\\
								  &\leq L \cdot C m(g)^{\frac{1}{20}} + CL^{-2\sigma }.
\end{split}
\end{align}

 Recall that $\sigma > \frac{1}{2}$. Taking $L= m(g)^{-\frac{1}{40}}$ in (\ref{M_L}) and (\ref{M_r0}), we know that for any point $ x \in M\setminus M_{r_0}$,
$$
| \left<\nabla u^j, \nabla u^k \right> - \delta _{jk} |(x) \leq C m(g)^{\frac{1}{80}} = \Psi (m(g) ).
$$ 
\end{proof}

For $C(r_0)$ in Proposition \ref{C^2-estimate}, we can choose $r_1$ such that $C(r_0) r_1^{-\sigma }< \frac{1}{2}$. We fix a normalization as in Proposition \ref{C^2-estimate} such that the average of $u^j$ over $M_{r_1}\setminus M_{r_0}$ is zero for each $j \in \{1,2,3\} $. Then $\sup_{M_{r_1}}|u^j| \leq C(r_1)$. Notice that together with the gradient estimate in Proposition \ref{C^2-estimate}, we also have that for all $r > r_0$, 
\begin{align}\label{C(r)}
\sup_{M_r}|u^j| \leq C(r).
\end{align}

Define $$\mathcal{U}:=(u^1, u^2, u^3): M \to \mathbb{R}^3.$$ 
If we choose $r_2= 2C(r_1)$, then 
\begin{align}\label{outer1}
	\mathcal{U}^{-1}(\mathbb{R}^3\setminus B(0, r_2) ) \subset M\setminus M_{r_1}.
\end{align}

At the end of this section, we will prove two lemmas that will be used later to show that $\mathcal{U}$ is one-to-one around the infinity in the end.

\begin{lem}\label{one-to-one-outer}
	There exists a uniform $L_0> r_2$ such that for all $L> L_0$, and for any $w \in \mathbb{R}^3 \setminus B(0, L)$, the preimage $\mathcal{U}^{-1}\{w\} $ consists of at most one point.
\end{lem}
\begin{proof}
	Assume that $\mathcal{U}(y)=\mathcal{U}(z)=w$ for some $w \in \mathbb{R}^3\setminus B(0, L)$. Then we know $y, z \in M\setminus M_{r_1}$ by (\ref{outer1}). Under the diffeomorphism $\Phi : M\setminus M_{r_1}\to \mathbb{R}^3$, we can assume that $y, z \in \mathbb{R}^3\setminus B(0, r_1)$, where the metric $g$ is $C^2$-close to the Euclidean metric. Now consider $\mathcal{U}$ as a mapping $\mathcal{U}: (\mathbb{R}^3\setminus B(0,r_1), g) \to \mathbb{R}^3$. Take the straight line segment $[yz]$ between $y$ and $z$, and assume that $y'$ and $ z'$ are points on $\partial B(0,r_1)$ that lie on $[yz]$ and are closest to $y$ and $z$, respectively. Denote the unit direction vector of $[yz]$ as $a= (a_1, a_2, a_3) \in \mathbb{S}^2$. Set $u^a = \sum_{j=1}^3a_j u^j$, and $l=|yz|$. Since $\mathcal{U}(y)= \mathcal{U}(z)$, $u^{a}(y) = u^{a}(z)$ for any $a$. Now we have two cases to consider.

	Case 1): If no such points $y', z'$ exist, then $[yz] \subset \mathbb{R}^3\setminus B(0,r_1)$. By Proposition \ref{C^2-estimate}, we have
	\begin{align}\label{case1}
		\begin{split}
		u^{a}(z) - u^{a}(y) &= \int_{0}^{l} (u^{a}(y+at) )'dt\\
				    &= \int_{0}^{l} \sum_{j,k=1}^3\left< a_j \nabla u^j, a_k \partial y^k\right> \\
				    & \geq (1- C(r_0) r_1^{-\sigma })l\\
				    & \geq  \frac{1}{2}l,
	\end{split}
	\end{align}
	 which implies that $l=0$, and particularly $y=z$.

	 Case 2): If such $y',z'$ exist, then by choosing $L_0$ sufficiently large, we can assume that $l> 2 C( r_1)$. In this case, we can construct a piecewise straight line segment $\gamma _{y'z'}$ between $y'$ and $z'$ such that $\gamma _{y'z'}\subset \mathbb{R}^3\setminus B(0, r_1)$ and $|\gamma _{y'z'}| \leq 10r_1$. Furthermore, consider the new piecewise straight line segment $\gamma := [yy'] \cup \gamma _{y'z'}\cup [z'z]$. It follows that $\gamma \subset \mathbb{R}^3\setminus B(0, r_1)$. Similar to (\ref{case1}), we have  
	$$
	u^{a}(z)- u^{a}(z') \geq \frac{1}{2}|zz'|,\ u^{a}(y')-u^{a}(y) \geq \frac{1}{2}|yy'|,
	$$ 
	and by using $|\gamma _{y'z'}| \leq 10 r_1$ and Lemma \ref{onb-est},
	$$
	|u^{a}(z') - u^{a}(y')| \leq C(r_1).
	$$ 
So
$$
u^{a}(z)- u^{a}(y) \geq \frac{1}{2}(|zz'|+|yy'|) - C(r_1) \geq \frac{1}{2}l - C(r_1)>0,
$$ 
which is a contradiction.
\end{proof}
\begin{lem}\label{nonempty-preimage}
	For any $L_0>0$, there exists uniform $L_1 = L_1(L_0)$ depending on $L_0$ such that for all $L > L_1$, 
	$$\mathcal{U}^{-1}( \mathbb{R}^3\setminus B(0, L) ) \subset M\setminus M_{L_0},\ \ \mathcal{U}(M\setminus M_{L}) \subset \mathbb{R}^3\setminus B(0, L_0).$$
\end{lem}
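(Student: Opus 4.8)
The plan is to combine the $C^2$-estimate of Proposition \ref{C^2-estimate} near the end with the injectivity and growth information already established in Lemmas \ref{one-to-one-outer} and \ref{onb-est}. Concretely, recall that on $M \setminus M_{r_1}$ (identified via $\Phi$ with $\mathbb{R}^3 \setminus B(0, r_1)$) we have the asymptotic control $|\nabla u^j - \partial_{x^j}|(x) \leq C(r_0)|x|^{-\sigma}$, which we will use to show that $\mathcal{U}$ is a proper map behaving like a perturbation of the identity at infinity. First I would establish the inclusion $\mathcal{U}^{-1}(\mathbb{R}^3 \setminus B(0,L)) \subset M \setminus M_{L_0}$: suppose $x \in M_{ext}$ with $|\mathcal{U}(x)| > L$; since $\sup_{M_{ext, L_0}} |u^j| \leq C(L_0)$ by Proposition \ref{C^2-estimate}, we get $|\mathcal{U}(x)| \leq \sqrt{3}\, C(L_0)$ whenever $x \in M_{ext, L_0}$, so choosing $L_1 \geq \sqrt{3}\, C(L_0)$ forces $x \in M_{ext} \setminus M_{ext, L_0} \subset M \setminus M_{L_0}$.

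For the second inclusion $\mathcal{U}(M \setminus M_L) \subset \mathbb{R}^3 \setminus B(0, L_0)$, the key point is a reverse (lower) bound on $|\mathcal{U}(x)|$ in terms of how far $x$ sits in the end. Given $x \in M \setminus M_L$ for $L$ large, by choosing $L \gg r_1$ we have $x \in M \setminus M_{r_1}$, identified with a point in $\mathbb{R}^3 \setminus B(0, r_1)$ with $|x| \geq L$ (after possibly enlarging $L$ so that $M \setminus M_L$ corresponds to the region where $|x| \geq L$). Picking a point $x_0 \in \partial M_{r_1}$ and a path in $\mathbb{R}^3 \setminus B(0, r_1)$ from $x_0$ to $x$ that is close to the radial segment (as in Case 2 of the proof of Lemma \ref{one-to-one-outer}, to avoid $B(0,r_1)$), and integrating $\partial_t u^a$ along it with $a$ the (approximate) radial direction, the estimate $\langle \nabla u^j, \partial_{x^k}\rangle \geq \delta_{jk} - C r_1^{-\sigma}$ yields $|u^a(x) - u^a(x_0)| \geq \tfrac12 |x| - C(r_1) \geq \tfrac12 L - C(r_1)$. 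Since $|\mathcal{U}(x)| \geq |u^a(x)| \geq \tfrac12 L - C(r_1) - C(r_1)$, taking $L_1$ large enough that $\tfrac12 L_1 - 2C(r_1) > L_0$ finishes this inclusion.

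The main obstacle I expect is a bookkeeping issue rather than a deep one: one must be careful that "$x \in M \setminus M_L$" really does translate, for $L$ large, into "$|x| \geq L$ under $\Phi$" — this is fine because for $L > r_1$ the region $M_L \setminus M_{r_1}$ corresponds exactly to the annulus $B(0, L) \setminus B(0, r_1)$ in the asymptotically flat chart — and that the path used for integration stays in $\mathbb{R}^3 \setminus B(0, r_1)$ while having length comparable to $|x|$, so that the accumulated error from the $C r_1^{-\sigma}$ terms and the bounded correction $C(r_1)$ near $\partial B(0,r_1)$ is genuinely lower-order compared to the main term $\tfrac12 |x|$. Both inclusions hold simultaneously once $L_1$ is chosen as the maximum of the two thresholds above, which may be done since $C(L_0)$, $C(r_1)$ and $r_1$ are all uniform constants independent of $L$.
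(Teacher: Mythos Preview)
Your proposal is correct and follows essentially the same approach as the paper: both inclusions are handled exactly as you describe, with the first coming from the $\sup$ bound $\sup_{M_{ext,L_0}}|u^j|\leq C(L_0)$ and the second from integrating $u^a$ along a segment in the asymptotic chart. The only cosmetic difference is that the paper, instead of invoking Case~2 of Lemma~\ref{one-to-one-outer} to route the path around $B(0,r_1)$, simply takes $x\in\partial M_{L_0}$ to be the \emph{closest} point to $y$, so that $[xy]$ is radial and automatically stays in the good region; this slightly streamlines the bookkeeping you flagged as the main obstacle.
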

\begin{proof}
	By (\ref{C(r)}), we know $\sup_{M_{L_0}}|u^j| \leq C(L_0)$, which implies that for any $L > C(L_0)$, 
	$$\mathcal{U}^{-1}(\mathbb{R}^3\setminus B(0, L) ) \subset M\setminus M_{L_0}.$$

	For any $y \in M\setminus M_L$, we can choose a straight line segment $[xy]$ with $x \in \partial M_{L_0}$ being the closest point to $y$. Let $a$ denote the unit direction vector of $[xy]$. Similar to (\ref{case1}), we know that $$
	u^a(y)  \geq u^a(x) + \frac{1}{2}|xy| \geq  \frac{1}{2}(L-L_0) - C(L_0),
	$$ 
	which is greater than $L_0$ if $L > 3L_0 + C(L_0)$.

	Thus, by choosing $L_1= 3L_0+ C(L_0)$, the proof is complete.
\end{proof}

\section{$C^0$ modulo negligible volume convergence}\label{flat-convergence}

In this section, we will introduce the concept of a regular subregion, which is a region where the geometry is locally uniformly controlled. This regular subregion will play a crucial role in proving our convergence theorem. We will work within the same setup and utilize the notations introduced in Section \ref{effective-estimate}. Specifically, we assume that $M$ is an $(A, B, \sigma )$-AF exterior region and the mass satisfies $m(g)< 1$. 

By Sard's theorem, for any small $\tau \in (0,1)$ and $j \in \{1,2,3\} $, we can find $\tau^j_1 \in (\frac{\tau }{2}, \tau)$ such that the level set $$\{x \in M: |\nabla u^j|^2 (x) =1+ \tau^j_1 \}$$ is a smooth surface.

Define $$E_1^{\tau} := M\cap ( \cap _{j=1}^{3} \{x \in M: |\nabla u^j|^2(x)\leq 1+\tau^j_1  \} ).$$
Fix a uniform constant $r_0>0$ such that Lemma \ref{hessian estimate} and Lemma \ref{onb-est} hold. 
By the co-area formula, we have
\begin{align}\label{co-area}
	\begin{split}
	\int_{0}^{\infty} \mathcal{H}_{g}^2(&M_{r_0}\cap E^{\tau}_1 \cap \{ \sum_{j,k=1}^3| \left< \nabla u^j, \nabla u^k \right>-\delta _{jk}|^2 = t\} ) dt \\
					    &= \int_{M_{r_0}\cap E_1^{\tau }} \left|\nabla  \sum_{j,k=1}^3 | \left<\nabla u^j, \nabla u^k \right>-\delta _{jk}|^2 \right|\\
									 &\leq 2( (1+\tau )^2 +1) \int_{M_{r_0}\cap E_1^{\tau} }\sum_{j,k=1}^3(|\nabla ^2 u^j| |\nabla u^k|+ |\nabla ^2u^k| |\nabla u^j|) \\
									 & \leq C \sum_{j,k=1}^3 \left(\int_{M_{r_0}\cap E_1^{\tau }} \frac{|\nabla ^2 u^j|^2}{|\nabla u^j|}\right) ^{\frac{1}{2}} \left( \int_{M_{r_0}\cap E_1^{\tau }}|\nabla u^j|^{3} \right) ^{\frac{1}{6}}\left( \int_{M_{r_0}\cap E_1^{\tau }}|\nabla u^k|^{3} \right) ^{\frac{1}{3}} \\
									 &\leq C(r_0) m(g)^{\frac{1}{2}},
\end{split}
\end{align}
where in the last inequality, we used the mass inequality (\ref{mass-ineq}) and the fact that 
\begin{align*}
	\int_{M_{r_0}\cap E_1^{\tau }}|\nabla u^j|^{3} &\leq (1+\tau ) \int_{M_{r_0}}|\nabla u^j|^2\\
						       &\leq (1+ \tau )\int_{\partial B(0, r_0)} u^j |\nabla u^j|\\
						       &\leq C(r_0).
\end{align*}

For $\tau _1:= \min \{\tau _1^1, \tau _1^2, \tau _1^3\} $, by Sard's theorem again and using (\ref{co-area}), we can find a generic $\tau _2 \in (\frac{\tau_1 }{2}, \tau_1) $ such that the level set
$$
\{x \in M: \sum_{j,k=1}^3| \left< \nabla u^j, \nabla u^k \right>-\delta _{jk}|^2 = \tau_2\} 
$$ 
is a smooth surface and satisfies
\begin{align}\label{boundary-area-1}
\mathcal{H}^2_{g}(M_{r_0} \cap E_1^{\tau} \cap \{ x \in M: \sum_{j,k=1}^3| \left< \nabla u^j, \nabla u^k \right>-\delta _{jk}|^2 = \tau_2\}) \leq \frac{C(r_0)m(g)^{\frac{1}{2}}}{\tau}.
\end{align}

Define
\begin{align}\label{E^tau_2}
E_2^{\tau}:= M\cap \{x \in M: \sum_{j,k=1}^3| \left< \nabla u^j, \nabla u^k \right>-\delta _{jk}| ^2\leq \tau_2\} .
\end{align} 
Then $E_2^{\tau }\subset E_1^{\tau }$. Notice that $\partial M \cap E_2^{\tau }= \emptyset$. This is because on $\partial M$, $\left<\nabla u^j, \nu  \right>=0$ for the normal vector $\nu $ of $\partial M$ and all $j \in \{1,2,3\} $, but $\{\nabla u^j\} _{j=1}^3$ is an almost orthonormal basis at any point in $E^{\tau }_2$. So we have
$$
\partial E_2^{\tau }= \{x \in M: \sum_{j,k=1}^{3}|\left<\nabla u^j, \nabla u^k \right>-\delta _{jk}|^2 = \tau _2\} .
$$ 
\begin{lem}\label{E_2^tau}
	For any $ \varepsilon \in (0, \frac{1}{100})$, there exists a uniform constant $C>0$ such that for $\tau = m(g)^{\varepsilon }$ and for all $m(g) \ll 1$, 
	$$
	M\setminus M_{r_0}\subset E_2^{\tau },
	$$ 
	and
	$$\mathcal{H}^2_{g}(\partial E_2^{\tau })\leq C \cdot m(g)^{\frac{1}{2}-\varepsilon }= \Psi (m(g)).
$$
\end{lem}
\begin{proof}
	Since $C m(g)^{\frac{1}{80}} < \frac{\tau}{4} $ for all $m(g) \ll 1$, by Lemma \ref{onb-est}, we have
	$$M\setminus M_{r_0} \subset E_2^{\tau },\ \ \partial E_2^{\tau }\subset M_{r_0}.$$
	So (\ref{boundary-area-1}) implies that
\begin{align*}
	\mathcal{H}^2_g(\partial E_2^{\tau })=\mathcal{H}^2_g(M_{r_0}\cap \partial E_2^{\tau }) \leq C(r_0) m(g)^{\frac{1}{2}-\varepsilon }= \Psi (m(g)).
\end{align*} 
\end{proof}
For any fixed $\varepsilon \in (0, \frac{1}{100})$, let $E_2^{\tau }$ be the region defined in Lemma \ref{E_2^tau}. We then define the region $E^{\tau }_3$ as the connected component of  $E_2^{\tau }$ that contains $M\setminus M_{r_0}$. Therefore, $E^{\tau }_3$ also satisfies the properties in Lemma \ref{E_2^tau}. We consider the harmonic map $\mathcal{U}$ restricted to $E^{\tau }_3$, given by
$$
\mathcal{U}=(u^1, u^2, u^3): E^{\tau }_3\to \mathbb{R}^3.
$$ 

By definition, $\mathcal{U}$ is nondegenerate and therefore a local diffeomorphism over $E^{\tau }_3$. The regular subregion is defined as the subset of $E^{\tau }_3$ where $\mathcal{U}$ is a diffeomorphism as follows.

Set $Y^{\tau }_3:= \mathcal{U}(E^{\tau }_3)$, and $$Y^{\tau }_4 := \{ y \in Y^{\tau }_3: \mathcal{H}^0( \mathcal{U}^{-1}\{ y\} \cap E^{\tau }_3) =1 \},\ Y^{\tau }_s := Y^{\tau }_3\setminus Y^{\tau }_4. $$

\begin{lem}\label{Y^tau open}
	$Y^{\tau }_4$ is open in $Y^{\tau }_3$, and $\partial Y^{\tau }_4 \subset \mathcal{U}(\partial E^{\tau }_3)$.
\end{lem}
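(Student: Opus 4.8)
The plan is to prove the two assertions separately, both relying on the fact that $\mathcal{U}$ is a local diffeomorphism on $E^\tau_3$ (by the definition of $E^\tau_2$, the vectors $\nabla u^j$ form an almost-orthonormal frame, so $d\mathcal{U}$ is invertible everywhere on $E^\tau_3$).

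First I would show $Y^\tau_4$ is open in $Y^\tau_3$. Fix $y\in Y^\tau_4$ and let $x$ be the unique preimage in $E^\tau_3$. Since $\mathcal{U}$ is a local diffeomorphism, there is an open neighborhood $V\ni x$ in $E^\tau_3$ mapped diffeomorphically onto an open set $W\ni y$ in $\mathbb{R}^3$. The claim is that a possibly smaller neighborhood of $y$ lies in $Y^\tau_4$. Suppose not: then there is a sequence $y_k\to y$ in $Y^\tau_3$, each $y_k$ having at least two preimages $x_k\neq x_k'$ in $E^\tau_3$. One of them, say $x_k$, may be taken in $V$ (shrinking $W$), hence $x_k\to x$; the other $x_k'$ must eventually leave $V$. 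Here I would use that $E^\tau_3$ is contained in $M\setminus(\text{a bad set})$ with controlled geometry near the outer region together with Lemma \ref{nonempty-preimage}: the preimages of a bounded set in $\mathbb{R}^3$ under $\mathcal{U}$ stay in a fixed $M_{L_0}$, and on $M\setminus M_{r_0}$ one-to-one-ness is already guaranteed by Lemma \ref{one-to-one-outer} (for $y$ in the outer part) or, for $y$ in the bounded part, $\overline{E^\tau_3}\cap \overline{M_{L_0}}$ is compact (as a closed subset of the complete manifold with $\partial E^\tau_3$ a compact smooth surface). So $x_k'$ subconverges to some $x'\in \overline{E^\tau_3}$ with $\mathcal{U}(x')=y$. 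If $x'\in E^\tau_3$, then $x'\neq x$ (it is the limit of points outside $V$) contradicts $y\in Y^\tau_4$. The remaining possibility is $x'\in\partial E^\tau_3$ — but that is exactly what the second assertion is about, so at worst we conclude $y\in\mathcal{U}(\partial E^\tau_3)$, which cannot happen for $y\in Y^\tau_4$ once we know $\partial Y^\tau_4\subset\mathcal{U}(\partial E^\tau_3)$; to avoid circularity I would instead argue directly that an interior $y$ cannot have a preimage on $\partial E^\tau_3$ accumulated this way unless $y$ itself is a limit of $\mathcal{U}(\partial E^\tau_3)$, and since $\mathcal{U}$ is a proper local diffeomorphism near $x'\in\partial E^\tau_3$ onto a neighborhood in $\mathbb{R}^3$, $y\in\overline{\mathcal{U}(\partial E^\tau_3)}=\mathcal{U}(\overline{\partial E^\tau_3})=\mathcal{U}(\partial E^\tau_3)$ by compactness — handled cleanly in the proof of the second claim, which I now describe.

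For $\partial Y^\tau_4\subset\mathcal{U}(\partial E^\tau_3)$: let $y\in\partial Y^\tau_4$ (closure taken in $Y^\tau_3$), so $y\in Y^\tau_3$ and there are $y_k\in Y^\tau_4$, $y_k\to y$, while $y\notin Y^\tau_4$, i.e. $y\in Y^\tau_s$, meaning $\mathcal{U}^{-1}\{y\}\cap E^\tau_3$ has at least two points $x\neq x'$ (it is nonempty since $y\in Y^\tau_3$). Let $x_k$ be the unique preimage of $y_k$; as above the $x_k$ stay in a fixed compact set $K:=\overline{E^\tau_3}\cap\overline{M_{L_0}}$, so after passing to a subsequence $x_k\to x_\infty\in K$ with $\mathcal{U}(x_\infty)=y$. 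If $x_\infty\in E^\tau_3$: since $\mathcal{U}$ is a local diffeomorphism at both $x$ and $x'$, there are disjoint neighborhoods $V\ni x$, $V'\ni x'$ in $E^\tau_3$ each mapped diffeomorphically onto neighborhoods of $y$; but $x_\infty$ can equal at most one of $x,x'$, so for $k$ large $x_k$ lies in, say, $V$, and then $\mathcal{U}^{-1}\{y_k\}$ also meets $V'$ (pull $y_k$ back through the diffeomorphism $\mathcal{U}|_{V'}$), giving $y_k$ two distinct preimages in $E^\tau_3$, contradicting $y_k\in Y^\tau_4$. Hence $x_\infty\in\partial E^\tau_3$, and $y=\mathcal{U}(x_\infty)\in\mathcal{U}(\partial E^\tau_3)$, as desired.

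The main obstacle is the compactness/properness input: one must be sure that the preimages $\mathcal{U}^{-1}\{y\}$ of a point (or of a convergent sequence) cannot escape to infinity inside $E^\tau_3$. This is where Lemma \ref{nonempty-preimage} and Lemma \ref{one-to-one-outer} are essential — they confine preimages of bounded sets to the fixed bounded region $M_{L_0}$ and already force uniqueness outside $M_{r_0}$, so that the only way the ``branching locus'' $Y^\tau_s$ can arise is inside the compact region, and its boundary can only touch $\mathcal{U}(\partial E^\tau_3)$. Once properness is in hand, everything else is the standard open-mapping/inverse-function-theorem bookkeeping for local diffeomorphisms.
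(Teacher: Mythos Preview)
Your argument is correct, and for the first assertion it is essentially the paper's argument made explicit: the paper simply writes ``then $x_k^1\to x$, $x_k^2\to x$'' without justification, and you are right that this step needs the properness supplied by Lemma~\ref{nonempty-preimage} (preimages of bounded sets stay in a fixed $M_{ext,L}$, which is compact). Your concern about circularity, however, is unnecessary. Since $E^\tau_3$ is a connected component of the closed sublevel set $E^\tau_2=\{\sum|\langle\nabla u^j,\nabla u^k\rangle-\delta_{jk}|^2\le\tau_2\}$, it is itself closed; hence any subsequential limit $x'$ of the $x_k'$ lies in $E^\tau_3$ (not merely in its closure), satisfies $\mathcal{U}(x')=y$, and is $\neq x$ because $x_k'\notin V$. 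This already contradicts $y\in Y^\tau_4$, with no need to treat ``$x'\in\partial E^\tau_3$'' as a separate case.

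For the second assertion your route is genuinely different from the paper's. You follow the unique preimages $x_k\in E^\tau_3$ of $y_k\in Y^\tau_4$, extract a limit $x_\infty$, and argue it must land on $\partial E^\tau_3$; this again requires the compactness input. The paper instead avoids compactness entirely: assuming $y\in\partial Y^\tau_4\setminus\mathcal{U}(\partial E^\tau_3)$, one has $y\in Y^\tau_s$ and every preimage of $y$ lies in the interior of $E^\tau_3$; taking two such preimages $x^1\neq x^2$ with disjoint neighborhoods $U_1,U_2$ on which $\mathcal{U}$ is a diffeomorphism onto a common neighborhood $U$ of $y$, every point of $U$ has at least two preimages, so $U\cap Y^\tau_4=\emptyset$, contradicting $y\in\partial Y^\tau_4$. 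Both arguments work; the paper's is shorter and uses only the local-diffeomorphism property, while yours reuses the properness machinery you already set up for the first part.
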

\begin{proof}
	Otherwise, if we assume that $Y^{\tau }_4$ is not open in $Y^{\tau }_3$, then for some $y \in Y^{\tau }_4\setminus \partial Y^{\tau }_3$, there exists a sequence of $y_k \to y$ and two different sequences of $x_k^1 \neq  x_k^2$ such that $\mathcal{U}(x_k^1)=\mathcal{U}(x_k^2)=y_k$. Assuming $\mathcal{U}(x)=y$, we have $x_k^1\to x$ and $x_k^2\to x$, which contradicts the fact that $\mathcal{U}$ is a local diffeomorphism. 

	To see $\partial Y^{\tau }_4 \subset \mathcal{U}(\partial E^{\tau }_3)$, assume otherwise that there exists $z \in \partial Y^{\tau }_4$ such that $z \notin \mathcal{U}(\partial E^{\tau }_3)$. Notice that $\partial Y^{\tau }_3 \subset \mathcal{U}(\partial E^{\tau }_3)$, so $z \notin \partial Y^{\tau }_3$. If $z \in Y_4^{\tau }$, then since $Y_4^{\tau }$ is open in $Y^{\tau }_3$ as argued before, $z \notin \partial Y_4^{\tau }$, which contradicts our assumption on $z$. Therefore, $z \in Y^{\tau }_s$, and there exist $x^1, x^2 \in E^{\tau }_3\setminus \partial E^{\tau }_3$ such that $x^1 \neq  x^2$ and $ \mathcal{U}(x^1)= \mathcal{U}(x^2) =z$. We can then choose small neighborhoods $ U_i$ of $x^i$ and $U$ of $z$ such that the restrictions $\mathcal{U}: U_i \to U$ are diffeomorphisms for each $i \in \{1,2\} $. This is a contradiction since $Y_4^{\tau } \cap U \neq \emptyset$, but there are at least two preimages of $\mathcal{U}$ for points in $Y_4^{\tau } \cap U$.
\end{proof}

\begin{lem}\label{Y^tau_4}
	There exists a uniform $L_0>0$ such that $\mathbb{R}^3\setminus B(0, L_0) \subset Y^{\tau }_4$.
\end{lem}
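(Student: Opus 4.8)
The plan is to verify, for every $w \in \mathbb{R}^3$ with $|w|$ larger than a suitable uniform $L_0$, the two conditions in the definition of $Y^\tau_4$: that $w \in \mathcal{U}(E^\tau_3)$, and that $\mathcal{U}^{-1}\{w\} \cap E^\tau_3$ is a single point. For $L_0$ I would take any uniform constant that is at least as large as the threshold in Lemma~\ref{one-to-one-outer} and exceeds $r_2$. With this choice the uniqueness part is immediate: by Lemma~\ref{one-to-one-outer} (applied to the round annulus of width one containing $w$), $\mathcal{U}^{-1}\{w\}$ contains at most one point of $M_{ext}$, and $E^\tau_3 \subset E^\tau_2 \subset M_{ext}$, so $\mathcal{H}^0(\mathcal{U}^{-1}\{w\} \cap E^\tau_3) \leq 1$. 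Hence the real content is the surjectivity statement $\mathbb{R}^3 \setminus B(0,L_0) \subset \mathcal{U}(E^\tau_3)$.

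For surjectivity I would run a Brouwer degree (winding number) argument on a large annulus: since $\mathcal{U}$ is defined only on the exterior region and is not known to be injective there, one cannot simply invoke the inverse function theorem to produce preimages. Since $r_1 \geq r_0$ we have $M \setminus M_{r_1} \subset M \setminus M_{r_0} \subset E^\tau_3$, so, identifying $M \setminus M_{r_1}$ with $\mathbb{R}^3 \setminus B(0,r_1)$ via $\Phi$ (where $g$ is $C^2$-close to $g_E$), it suffices to find a preimage of $w$ inside this end. I will use two properties of $\mathcal{U}$ on this end: (a) $\mathcal{U}(\partial B(0,r_1)) \subset \overline{B(0,r_2)}$, which is exactly the inclusion $\mathcal{U}^{-1}(\mathbb{R}^3 \setminus B(0,r_2)) \subset M \setminus M_{r_1}$ established earlier; and (b) $|\mathcal{U}(x) - x|/|x| \to 0$ as $|x| \to \infty$, obtained by integrating $|\nabla u^j - \partial x^j| \leq C(r_0)|x|^{-\sigma}$ from Proposition~\ref{C^2-estimate} along radial rays based on $\partial B(0,r_1)$, where (a) controls the boundary term and $\sigma > \tfrac12$ controls the radial integral. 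Now fix $w$ with $|w| > L_0 > r_2$, pick a uniform $R = R(|w|)$ so large that $\sup_{\partial B(0,R)}|\mathcal{U}(x) - x| < R/4$ and $|w| < R/4$, and set $A_R := \{ r_1 \leq |x| \leq R \}$, which is diffeomorphic to $S^2 \times [r_1,R]$. If $w \notin \mathcal{U}(A_R)$, then $G := (\mathcal{U} - w)/|\mathcal{U} - w|$ is a continuous map $A_R \to S^2$, and the product structure of $A_R$ forces $\deg(G|_{\partial B(0,R)}) = \deg(G|_{\partial B(0,r_1)})$. On $\partial B(0,R)$, the straight-line homotopy $s \mapsto (1-s)x + s\mathcal{U}(x) - w$ never vanishes (since $|(1-s)x + s\mathcal{U}(x)| \geq R - R/4 > |w|$, using (b) and the choice of $R$), so $G|_{\partial B(0,R)}$ is homotopic to $x \mapsto (x-w)/|x-w|$, which has degree $1$ because $w$ lies inside $B(0,R)$. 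On $\partial B(0,r_1)$, property (a) shows the image of $G|_{\partial B(0,r_1)}$ lies in the image under $z \mapsto (z-w)/|z-w|$ of the convex set $\overline{B(0,r_2)}$, which avoids $w$; contracting that convex set shows $G|_{\partial B(0,r_1)}$ is null-homotopic, of degree $0$. The contradiction $1 = 0$ shows $w \in \mathcal{U}(A_R) \subset \mathcal{U}(E^\tau_3) = Y^\tau_3$.

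Combining the two parts, every $w$ with $|w| > L_0$ lies in $Y^\tau_3$ and satisfies $\mathcal{H}^0(\mathcal{U}^{-1}\{w\} \cap E^\tau_3) = 1$, i.e.\ $w \in Y^\tau_4$; thus $\mathbb{R}^3 \setminus B(0,L_0) \subset Y^\tau_4$. I expect the surjectivity step to be the only genuine obstacle — the uniqueness half falls straight out of Lemma~\ref{one-to-one-outer} — and within it the key points are the uniform control of $\mathcal{U}$ near infinity (property (b)) and on the inner sphere $\partial B(0,r_1)$ (property (a)), which together make the annular degree computation legitimate.
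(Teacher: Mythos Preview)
Your proposal is correct and follows essentially the same route as the paper: uniqueness from Lemma~\ref{one-to-one-outer}, surjectivity via a degree argument. The paper phrases the degree step more tersely, working with the mod $2$ degree of $\mathcal{U}:(E^{\tau}_3,\partial E^{\tau}_3)\to(\mathbb{R}^3,B(0,L_0))$ and invoking Lemmas~\ref{one-to-one-outer} and~\ref{nonempty-preimage} to assert that this degree equals one, whereas you restrict to a compact annulus $A_R$ in the end $M\setminus M_{r_1}\subset E^{\tau}_3$ and compute the winding number explicitly by the two boundary homotopies. Your version is more self-contained on the point the paper leaves implicit, namely why the degree is actually $1$ rather than $0$ (Lemma~\ref{one-to-one-outer} alone only gives ``at most one preimage''); your outer-sphere homotopy to the identity supplies exactly that. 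Two minor remarks: property~(b) is in fact already built into the construction of the $u^j$ (they satisfy $|u^j-x^j|=o(|x|^{1-\sigma})$ by definition), so the radial integration is not strictly needed; and your $R$ need not be uniform in $w$, only chosen for each fixed $w$, which is all the argument requires.
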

\begin{proof}
	Let's choose a uniform $L_0$ such that Lemma \ref{one-to-one-outer} holds. 
	From Lemma \ref{nonempty-preimage}, we know that $\mathcal{U}$ maps a neighborhood of the infinity in $E^{\tau }_3$ to a neighborhood of the infinity in $\mathbb{R}^3$. Additionally, according to Lemma \ref{E_2^tau}, we have $\mathcal{U}(\partial E^{\tau }_3) \subset B(0, L_0)$. Applying Lemma \ref{one-to-one-outer}, we can deduce that the mod $2$ degree of $\mathcal{U}: (E^{\tau }_3, \partial E^{\tau }_3) \to (\mathbb{R}^3, B(0, L_0) )$ is  one. By applying degree theory, we can conclude that $$\mathbb{R}^3\setminus B(0, L_0) \subset Y^{\tau }_3.$$
	Moreover, by Lemma \ref{one-to-one-outer}, it follows that $Y^{\tau }_4\cap ( \mathbb{R}^3\setminus B(0, L_0) ) = Y^{\tau }_3 \cap  (\mathbb{R}^3\setminus B(0, L_0) )$.

\end{proof}

\begin{prop}\label{regular-subregion}
	There exist uniform constants $C, L_0>0$ such that for any $\varepsilon \in (0, \frac{1}{100})$ and $\tau = m(g)^{\varepsilon }$, we can find a connected subregion $E^{\tau } \subset E^{\tau }_3$ satisfying
	$$
	M\setminus M_{L_0} \subset E^{\tau }$$
	and
	$$
	\mathcal{H}^2_g(\partial E^{\tau }) \leq C \cdot m(g)^{\frac{1}{2}- \varepsilon }.
	$$ 
	Moreover, the restriction map $$\mathcal{U}: E^{\tau }\to \mathbb{R}^3$$ is a diffeomorphism onto its image $Y^{\tau }:= \mathcal{U}(E^{\tau }) \subset \mathbb{R}^3$, and $\mathbb{R}^3\setminus B(0, L_0) \subset Y^{\tau }$. Such $E^{\tau }$ is called the regular subregion.
\end{prop}
\begin{proof}
	Notice that $\mathcal{U}(\partial E^{\tau }_3)$ is a smooth surface in $\mathbb{R}^3$, and from Lemma \ref{Y^tau open}, we have $\partial Y^{\tau }_4 \subset \mathcal{U}(\partial E^{\tau }_3)$. Thus, we can choose a smaller approximated region $Y^{\tau }_5 \subset Y^{\tau }_4$ such that $\partial Y^{\tau }_5$ is smooth,  $Y^{\tau }_5 \cup \partial Y^{\tau }_5 \subset Y^{\tau }_4$, $Y^{\tau }_4\setminus Y^{\tau }_5$ lies in a small neighborhood of $\mathcal{U}(\partial E^{\tau }_3)$, and $|\partial Y^{\tau }_5| \leq 2 |\mathcal{U}(\partial E^{\tau }_3)|$. Let $Y^{\tau }$ be the connected component of $Y^{\tau }_5$ such that $\mathbb{R}^3\setminus B(0, L_0) \subset Y^{\tau }$, and define $E^{\tau }:= \mathcal{U}^{-1}(Y^{\tau })$. From Lemma \ref{Y^tau_4}, $E^{\tau } \neq \emptyset$.

	By our definition, we know that $\mathcal{U}: E^{\tau }\to Y^{\tau }$ is a diffeomorphism and we have $M\setminus M_{L_0}\subset E^{\tau }$. Using the area formula and (\ref{E^tau_2}), we can deduce that
$$
\mathcal{H}^2_g (\partial E^{\tau }) = \int_{\partial Y^{\tau }}| J \mathcal{U}^{-1}| \leq (1+ 4 \tau ) |\partial Y^{\tau }| \leq 3 |\mathcal{U}(\partial E^{\tau }_2)| \leq 3 \int_{\partial E^{\tau }_2}| J \mathcal{U}| \leq 4 \mathcal{H}^2_g(\partial E^{\tau }_2),
$$ 
which together with Lemma \ref{E_2^tau} gives the conclusion. 
\end{proof}

In the following, our goal is to prove the convergence of the volumes of regular subregions as desired. To accomplish this, it is sufficient to prove it for a single large region, and a cylinder happens to be a convenient shape for this purpose.

For any fixed unit vector $a=(a_1,a_2,a_3) \in \mathbb{S}^2$, we define $u^a := \sum_{j=1}^{3}a_ju^j$. Now, let's define the coordinate cylinder $C_L^a:= D_L^{\pm} \cup T_L$, where
$$
D_L^{\pm}:= \{x \in M: u^a(x) = \pm L, |\mathcal{U}(x)|^2 - u^a(x) ^2 \leq L^2\} ,
$$ 
$$T_L:= \{x \in M: |u^a(x)| \leq L, |\mathcal{U}(x)|^2 - u^a(x)^2 = L^2\} .$$
Let $\Omega^a _L\subset M$ be the closure of the bounded component of $M\setminus C_L^a$. In the following, we will also use $\Omega _L^j$ to denote such a region in $(M, g)$ associated with $u^j$, and use $\Omega _{L, \mathrm{Eucl}}^j$ to denote such a region in $\mathbb{R}^3$ associated with $x^j$. Therefore, $\mathcal{U}(\Omega _L^j) \subset \Omega _{L, \mathrm{Eucl}}^j$.

\begin{lem}\label{volume-Omg_L}
	For a uniform $L_0$ as in Proposition \ref{regular-subregion}, and for any fixed $j\in \{1,2,3\} $ and $L> L_0$,
	$$
	\Vol_g(\Omega^j _L \cap E^{\tau } ) \geq (1-4\tau ) (1- \Psi (m(g) ) ) 2 \pi L^3 - \Psi (m(g)).
	$$ 
\end{lem}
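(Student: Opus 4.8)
The plan is to compute $\Vol_g(\Omega_L \cap E^\tau)$ by applying the divergence theorem to the vector field $u\nabla u$ on $\Omega_L \cap E^\tau$, exploiting that $u$ is harmonic so that $\mathrm{div}(u\nabla u) = |\nabla u|^2$, and that on $E^\tau$ we have $|\nabla u|^2 \geq 1 - \Psi(m(g))$ (indeed $|\nabla u^j|^2 \leq 1 + \tau_1^j$ on $E_1^\tau \supset E_2^\tau$, and by Lemma \ref{onb-est} together with the definition of $E_2^\tau$ the quantity $\big||\nabla u^j|^2 - 1\big|$ is controlled by $\sqrt{\tau_2} = \Psi(m(g))$). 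Thus
\[
\Vol_g(\Omega_L \cap E^\tau) \;\leq\; \frac{1}{1 - \Psi(m(g))} \int_{\Omega_L \cap E^\tau} |\nabla u|^2 \, dV \;=\; \frac{1}{1-\Psi(m(g))} \int_{\partial(\Omega_L \cap E^\tau)} u \,\langle \nabla u, \nu\rangle \, dA,
\]
and similarly a lower bound $\Vol_g(\Omega_L \cap E^\tau) \geq \frac{1}{1+\Psi(m(g))}\int_{\partial(\Omega_L \cap E^\tau)} u\langle\nabla u,\nu\rangle\,dA$, where $\nu$ is the outward unit normal. So the task reduces to evaluating the boundary integral.

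The boundary $\partial(\Omega_L \cap E^\tau)$ splits into three pieces: the part lying on the coordinate cylinder $C_L = D_L^\pm \cup T_L$, the part lying on $\partial E^\tau$, and (if present) the part on $\partial M_{ext}$ — but the latter is empty since $\partial E_2^\tau \cap \partial M_{ext} = \emptyset$ as shown above, and on $\partial M_{ext}$ we would have $\langle\nabla u,\nu\rangle = 0$ anyway by the Neumann condition, so it contributes nothing. On the cap $D_L^\pm$ one has $u = \pm L$ and $\nu$ is (up to the small error in $\nabla u$ versus the Euclidean normal) $\pm \nabla u / |\nabla u|$, so $u\langle\nabla u,\nu\rangle \approx L|\nabla u| \approx L$; integrating over $D_L^\pm$, whose image under $\mathcal{U}$ is essentially the Euclidean disk of radius $L$ with area $\pi L^2$ (up to $\Psi(m(g))$ distortion coming from $|\nabla u^j - \partial x^j| \leq C|x|^{-\sigma}$ and the Jacobian being $(1\pm\Psi)$ by Lemma \ref{onb-est}), this gives $\approx L \cdot \pi L^2$ from each of the two caps, totalling $2\pi L^3$. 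On the lateral part $T_L$ one has $\langle\nabla u,\nu\rangle \approx 0$ because $\nu$ is nearly perpendicular to $\nabla u \approx \partial_{x^j}$ there (the cylinder $|\mathcal{U}|^2 - u^2 = L^2$ has normal in the directions orthogonal to the $u^j$-axis), so this piece contributes only a $\Psi(m(g))$ error after using the area bound on $T_L$. Finally, the piece on $\partial E^\tau$: here $|u| \leq CL$ (since $E^\tau \subset \{|\mathcal{U}| \lesssim L\}$ on the relevant region, or more simply $\Omega_L$ is bounded in terms of $L$), $|\nabla u| \leq C$, and the area $\mathcal{H}^2_g(\partial E^\tau) \leq 4\mathcal{H}^2_g(\partial E_2^\tau) \leq C m(g)^{1/2 - \varepsilon} = \Psi(m(g))$, so this contributes at most $\Psi(m(g))$ as well. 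Combining, $\int_{\partial(\Omega_L\cap E^\tau)} u\langle\nabla u,\nu\rangle\,dA \geq (1-\Psi(m(g)))\,2\pi L^3 - \Psi(m(g))$, and feeding this through the volume inequality above, and absorbing the $(1-\Psi(m(g)))^{-1}$ factors plus the $(1+4\tau)^{-1}$ Jacobian factor from passing between $E^\tau$ and $Y^\tau$, yields the claimed bound $\Vol_g(\Omega_L\cap E^\tau) \geq (1-4\tau)(1-\Psi(m(g)))\,2\pi L^3 - \Psi(m(g))$.

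I expect the main obstacle to be the careful bookkeeping on the cylinder piece $T_L$ and the identification of the $\mathcal{U}$-image of $\Omega_L \cap E^\tau$ with (a subset of) the Euclidean solid cylinder of radius $L$ and height $2L$: one needs to know that $\mathcal{U}$ restricted to $E^\tau$ is a diffeomorphism onto $Y^\tau$ (Proposition \ref{regular-subregion}) with $\mathbb{R}^3 \setminus B(0,L_0) \subset Y^\tau$, so that the "outer" part of the cylinder is fully covered, while only the bounded deficit near $\partial E^\tau$ — of measure $\Psi(m(g))$ by the area bound and an isoperimetric-type estimate — can be missing. Quantifying "the region enclosed by a surface of small area is small in volume" in this setting (or simply noting that the portion of the Euclidean cylinder not in $Y^\tau$ is contained in a neighborhood of $\mathcal{U}(\partial E^\tau_3)$ whose volume is controlled) is where the estimate $|\partial E^\tau| \leq \Psi(m(g))$ really gets used, and handling it cleanly — rather than the divergence-theorem computation, which is routine — is the delicate point.
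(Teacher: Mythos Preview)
Your approach is essentially the same as the paper's: apply the divergence theorem to $u\nabla u$ and use $|\nabla u|^2 \approx 1$ on $E^\tau$. The paper organizes it slightly differently --- it first integrates by parts on the full $\Omega_L$ (boundary $C_L \cup \partial M_{ext}$, giving $(1\pm\Psi(m(g)))\,2\pi L^3$ directly), then separately on $M_{ext}\setminus E^\tau$ (boundary $\partial M_{ext}\cup\partial E^\tau$, giving $\leq \Psi(m(g))$ since $|u|\leq C(r_0)$ and $|\nabla u|\leq 1+\tau$ on $\partial E^\tau\subset M_{r_0}$), and subtracts --- whereas you integrate on $\Omega_L\cap E^\tau$ in one shot. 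These are equivalent, and your boundary bookkeeping is correct.

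Two remarks. First, your anticipated ``main obstacle'' does not materialize: since $C_L\subset \mathcal{U}^{-1}(\mathbb{R}^3\setminus B(0,L))$ lies entirely in the outer region $M\setminus M_{r_0}\subset E^\tau$ for $L$ large, the cylinder piece is handled directly by Lemma~\ref{onb-est}, and no isoperimetric or covering argument is needed. The paper's decomposition makes this transparent because $\partial\Omega_L$ never sees $\partial E^\tau$. Second, the $(1-4\tau)$ factor does not come from a ``Jacobian between $E^\tau$ and $Y^\tau$'' but simply from $||\nabla u|^2-1|\leq 2\tau$ on $E^\tau$, so that $\Vol_g(\Omega_L\cap E^\tau)\geq (1+2\tau)^{-1}\int_{\Omega_L\cap E^\tau}|\nabla u|^2$.
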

\begin{proof}
Since $\Delta u^j=0$, we can apply integration by parts to obtain
	\begin{align*}
		\int_{\Omega ^j_L } |\nabla u^j|^2 &= \int_{T_L \cup D_L^{\pm} }u^j \left<\nabla u^j, \nu  \right>+ \int_{\partial M }u^j \left<\nabla u^j, \nu  \right>\\
					       &= (1\pm \Psi (m(g) ) ) 2\pi L^3,
	\end{align*}
	where we used Lemma \ref{onb-est} in the last equality.

Note that $M\setminus E^{\tau }$ is a bounded domain inside $M_{r_0}$ with boundary $\partial M \cup \partial E^{\tau }$, and $u^j|_{M_{r_0}} $ is uniformly bounded. By applying integration by parts, we obtain
	\begin{align*}
		\int_{M\setminus E^{\tau }}|\nabla u^j|^2& = \int_{\partial E^{\tau } }u^j \left<\nabla u^j, \nu  \right>  + \int_{\partial M}u^j \left<\nabla u^j, \nu  \right>\\
						       &= \int_{\partial E^{\tau }}u^j \left<\nabla u^j, \nu  \right>\\
						       &\leq C(r_0)(1+\tau ) \mathcal{H}_g^2(\partial E^{\tau } )\\
														   & \leq \Psi (m(g) ).
	\end{align*}
	So 
\begin{align*}
	(1+ \Psi (m(g) ) ) 2\pi L^3& \geq \int_{\Omega ^j_L \cap E^{\tau }}|\nabla u^j|^2\\
						  &\geq (1-\Psi (m(g) ) ) 2\pi L^3 - \Psi (m(g) ).
\end{align*} 
The conclusion follows from the fact that inside $ E^{\tau }$, we have $||\nabla u^j|^2 - 1| \leq 2\tau . $
\end{proof}

\begin{lem}\label{full-volume}
	For a uniform $L_0$ as in Proposition \ref{regular-subregion}, and for any $D_1>0, D_2 \geq L_0$, $\forall p^* \in B(0, D_1)$, we have
	$$|B(p^*, D_2) \setminus Y^{\tau }| \leq \Psi (m(g)|D_1, D_2).$$
\end{lem}
\begin{proof}
	Otherwise, assume that $|B(p^*, D_2)\setminus  Y^{\tau }|>c$ for some uniform $c >0$. Choose $L=2(D_1+D_2)$ so that $B(p^*, D_2) \subset \Omega _{L, \mathrm{Eucl}}^1$. Then we have
	\begin{align*}
		|\Omega _{L, \mathrm{Eucl}}^1 \cap Y^{\tau }| & \leq |\Omega _{L, \mathrm{Eucl}}^1 \setminus B(p^*,D_2)| + |B(p^*, D_2) \cap Y^{\tau }|\\
					   & \leq |\Omega _{L, \mathrm{Eucl}}^1 \setminus B(p^*,D_2)| + |B(p^*,D_2)| -c\\
					   & \leq |\Omega _{L, \mathrm{Eucl}}^1| -c\\
					   &= 2\pi L^3 -c.
	\end{align*}
	On the other hand, from Lemma \ref{volume-Omg_L} and (\ref{E^tau_2}), we know
	$$
	|\Omega _{L, \mathrm{Eucl}}^1 \cap Y^{\tau }| \geq (1- 4\tau )\Vol_g(\Omega _L^1 \cap E^{\tau }) \geq (1-4\tau )^2(1-\Psi (m(g) ) )2\pi L^3 - \Psi (m(g) ).
	$$ 
	Letting $m(g)\to 0$ leads a contradiction.
\end{proof}
\begin{rmk}
	Alternatively, one can prove this lemma by using Lemma \ref{vol-area} and the fact that $|\partial Y^{\tau }| \leq \Psi (m(g) )$. Such an argument will also be used later in the case of Ricci curvature bounded below.
\end{rmk}

Now we can prove the convergence theorem in the $C^0$ modulo negligible volume sense.

\begin{proof}[Proof of Theorem \ref{main-scalar}]
	Assume that $(M_i, g_i)$ is a sequence of $(A,B,\sigma )$-AF $3$-manifolds with nonnegative scalar curvature and their ADM masses $m(g_i)\to 0$. 

	By Proposition \ref{regular-subregion}, we know that for any $\varepsilon \in (0, \frac{1}{100})$ and $\tau _i= m(g_i)^{\varepsilon }$, the regular subregion $E^{\tau _i}$ is well defined and satisfies
	$$
	M_i\setminus M_{i, L_0} \subset E^{\tau _i}
	$$ for some uniform $L_0 >0$, and 
	$$
	\mathrm{Area}_{g_i}(\partial E^{\tau _i}) \leq C\cdot m(g_i)^{\frac{1}{2}- \varepsilon }.
	$$
	Moreover, there exist diffeomorphisms $\mathcal{U}_i=(u_i^1, u_i^2, u_i^3): E^{\tau _i}\to Y^{\tau _i}\subset \mathbb{R}^3$. We still denote $(\mathcal{U}_i)_* g_i$ by $g_i$. Then for any $y \in Y^{\tau _i}$, $g_{i}^{jk}(y)= \left<\nabla u_i^j, \nabla u_i^k \right>(\mathcal{U}_i^{-1}(y) )$ and by (\ref{E^tau_2}), we have
	\begin{align}\label{C^0-tau_i}
	|g_i^{jk}- \delta _{jk}|(y) \leq \tau _i \to 0.
\end{align} 

	Take $Z_i := M_i \setminus E^{\tau _i}$. It's remaining to show that for any base point $p_i \in  E^{\tau _i}$, $(E^{\tau _i}, g_i, p_i) \to (\mathbb{R}^3, g_E, 0)$ in the pointed $C^0$ modulo negligible volume sense.

	\textbf{Case 1)}: there exists a uniform $L>0$ such that $\mathcal{U}_i(p_i) \in  B(0, L)$. By composing $\mathcal{U}_i$ with a uniform translation of $\mathbb{R}^3$ which maps $\mathcal{U}_i(p_i)$ to $0$, we can assume that $\mathcal{U}_i(p_i) =0$. 

	Fix a radius $\rho >0$. For any $x \in \hat{B}(p_i, \rho ) \subset E^{\tau _i}$, and for any $\epsilon >0$, there exists a smooth curve $\gamma \subset E^{\tau _i}$ satisfying $\gamma (0)=p_i, \gamma (1)= x$ and $\mathrm{Length}_{g_i}(\gamma ) \leq \rho + \epsilon $. Let $\sigma := \mathcal{U}_i(\gamma )$. Then $\sigma (0)=0$, and by (\ref{C^0-tau_i}), 
	\begin{align}\label{|sgm|}
		|\sigma | \leq (1+ 4 \tau_i ) \mathrm{Length}_{g_i}(\gamma ) \leq (1+ 4 \tau _i)(\rho + \epsilon ).
	\end{align}
	Taking $\epsilon \to 0$, we have $\mathcal{U}_i(\hat{B}(p_i, \rho ) ) \subset B(0, (1+ 4\tau _i)\rho)$.

	Let $\Omega _i:= \mathcal{U}_i^{-1}(B(0, (1+4 \tau _i)\rho ) \cap Y^{\tau _i} )$ and $\Omega := B(0, \rho)$. Then $\Omega _i \supset \hat{B}(p_i, \rho )$. We claim that $\Omega _i \to \Omega $ in the $C^0$ modulo negligible volume sense as defined in Definition \ref{C^0 modulo}. 
To show this, we take $U_i = \Omega _i$ and $\varphi _i = \mathcal{U}_i$ there. By (\ref{C^0-tau_i}), we know $| (\mathcal{U}_i)_* g_i - g_E|_{C^0(\mathcal{U}_i(\Omega _i) )} \to 0$.  Applying Lemma \ref{full-volume}, we have 
	$$
	|\Omega \setminus \mathcal{U}_i(\Omega _i)| = |B(0, \rho ) \setminus Y^{\tau _i}| \leq \Psi (m(g_i)| \rho ) \to 0,
	$$ 
which implies the claim.

\textbf{Case 2)}: for any $L>0$, there exists a subsequence of $\mathcal{U}_i(p_i)$ such that $\mathcal{U}_i(p_i) \in \mathbb{R}^3\setminus B(0, L)$. Denote $\mathcal{U}_i(p_i)$ by $q_i$. By taking a further subsequence, we can assume that $|0q_i| \to \infty$. By Lemma \ref{nonempty-preimage}, there exists a sequence $L_i \to \infty$ such that $p_i \in M_i\setminus M_{i, L_i}$. Fix a radius $\rho >0$. Similar to (\ref{|sgm|}), we have 
\begin{align}\label{U_i-rho}
	\mathcal{U}_i(\hat{B}(p_i, \rho ) ) \subset B(q_i, (1+ 4\tau _i)\rho ).
\end{align}
Since $\partial Y^{\tau _i} \subset B(0, L_0)$, $B(q_i, (1+4 \tau _i)\rho) \subset Y^{\tau _i}\setminus \partial Y^{\tau _i}$ for all sufficiently large $i$. So for any $y \in B(q_i, (1+4 \tau _i)\rho ) $, the line segment $[q_i y] \subset Y^{\tau _i}$. Let $\gamma := \mathcal{U}_i^{-1}([q_i y])$. By (\ref{C^0-tau_i}), we have
$$
\mathrm{Length}_{g_i}(\gamma ) \leq (1+ 4 \tau _i)|q_i y| \leq (1+ 4\tau _i)^2 \rho ,
$$
which implies that $\mathcal{U}_i^{-1}(y) \in \hat{B}(p_i, (1+4 \tau _i)^2 \rho )$. Together with (\ref{U_i-rho}), we have
\begin{align}
	B(q_i, (1+4 \tau _i)^{-2} \rho ) \subset \mathcal{U}_i(\hat{B}(p_i, \rho ) ) \subset B(q_i, (1+4 \tau _i) \rho ).
\end{align}
By (\ref{C^0-tau_i}) again, we know that $(\hat{B}(p_i, \rho ), g_i)$ $C^0$-converges to $(B(0, \rho ), g_E)$ in the Cheeger-Gromov sense. 
\end{proof}

\section{Removable singularity under Ricci lower bound}\label{removable-sing}
In this section, we prove the Gromov-Hausdorff convergence when the Ricci curvature is uniformly bounded from below. Let's consider a sequence $(M_i,g_i,p_i)$ of $(A,B,\sigma )$-AF $3$-manifolds with nonnegative scalar curvature, $p_i \in M_i \setminus M_{i, L_0}$,  Ricci curvature bounded from below by $\Ric_{g_i}\geq -2\Lambda$, and the mass $m(g_i)\to 0$.

By Lemma \ref{precomp-ricci}, up to a subsequence, we can assume that for some complete length metric space $(X,d_X)$,

$$
(M_i, g_i, p_i)\to (X, d_X, p_X)
$$ 
in the pointed Gromov-Hausdorff topology. Since $p_i \in M_i\setminus M_{i, L_0}$, we know $\Vol_{g_i}B(p_i, 1) \geq v_0$ for some uniform $v_0>0$ and $\dim X =3$.

From Proposition \ref{regular-subregion}, we have the existence of regular subregions $E^{\tau _i}\subset M_{i, ext}$. Let's consider the restricted metric space $(E^{\tau _i}, d_i)$, where $d_i$ is the restriction of the distance induced by $(M_i, g_i)$. For any fixed radius $r>0$, by Lemma \ref{cpt-subset-converg}, up to a subsequence, we can assume that $(E^{\tau _i}\cap B(p_i, r), d_i)$ converges to a compact subset of $(X, d_X)$. By a diagonalization argument, we can find a closed subset $X' \subset X$ such that
$$
(E^{\tau _i}, d_i) \to (X', d_X).
$$ 
In other words, for any fixed radius $r>0$, $(E^{\tau _i} \cap B(p_i, r), d_i) \to (X' \cap B(p_X, r), d_X)$. 
Similarly, consider the restricted metric, and assume 
$$(\partial E^{\tau _i}, d_i) \to (\Sigma, d_X) \subset (X', d_X).$$
By Proposition \ref{regular-subregion}, $X'\setminus \Sigma \neq \emptyset$.

In general, $(X', d_X)$ is not a length space and is different from $(X,d_X)$. In our case, by using the fact that $\mathrm{Area}(\partial E^{\tau _i})\to 0$ and the conditon on the lower bound of Ricci curvature, we can prove the following.
\begin{prop}\label{X'=X}
	$X'=X$.
\end{prop}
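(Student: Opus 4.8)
The plan is to show that the limit $X'$ of the regular subregions $(E^{\tau_i},d_i,p_i)$ coincides with the full Gromov-Hausdorff limit $X$ of $(M_i,g_i,p_i)$. Since $E^{\tau_i}\subset M_i$ is equipped with the restricted (not intrinsic) metric, we automatically get a $1$-Lipschitz inclusion $X'\hookrightarrow X$ in the limit, so the content is surjectivity: every point of $X$ is a limit of points in $E^{\tau_i}$. The obstruction to surjectivity would be a point $q_\infty\in X$ whose approximating sequence $q_i\in M_i$ stays a definite distance away from $E^{\tau_i}$; so I want to show that, up to a vanishing error, each $q_i$ can be moved into $E^{\tau_i}$. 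Recall $M_i\setminus E^{\tau_i}$ is a bounded region contained in $M_{i,r_0}$ whose boundary $\partial E^{\tau_i}$ (together with the minimal boundary $\partial M_{i,ext}$, which has area $\le 16\pi m(g_i)^2\to 0$ by Proposition \ref{Penrose-ineq}) has area $\mathcal{H}^2_{g_i}(\partial E^{\tau_i})\le \Psi(m(g_i))\to 0$.

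First I would fix $D>0$ large and work inside $B(p_i,D)$. The key geometric input is Lemma \ref{nbhd-point} and, more precisely, the remark following it: if $\Sigma_i$ is a hypersurface with $|\Sigma_i\cap B(p_i,2D)|\le \varepsilon_i\to 0$, then for a point $x_i$ bounded away from $\Sigma_i$ and \emph{any} target $y_i\in B(p_i,D)$, there is $z_i\in B(y_i,\delta_0)$ with $\delta_0=\Psi(\varepsilon_i)\to 0$ and a geodesic $\gamma_{x_iz_i}$ from $x_i$ to $z_i$ avoiding $\Sigma_i$; moreover the ``bad set'' of $y_i$ which cannot be so connected has volume $\le C|\Sigma_i|$. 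I will take $\Sigma_i=\partial E^{\tau_i}\cup\partial M_{i,ext}$ and $x_i$ a fixed ``anchor'' point deep in the outer region, e.g. $x_i=\mathcal{U}_i^{-1}(x^*)$ for a fixed $x^*\in\mathbb{R}^3\setminus B(0,2L_0)$; by the effective estimates (Lemma \ref{one-to-one-outer}, Lemma \ref{nonempty-preimage}) such $x_i$ lies well inside $E^{\tau_i}$ with uniformly controlled distance to $p_i$ and to $\partial E^{\tau_i}$. The non-collapsing bound $\Vol B(p_i,1)\ge v_0>0$ needed to apply the lemma follows since $(M_i,g_i,p_i)$ has a non-collapsed limit (the limit is $\mathbb{R}^3$, or one can use Bishop–Gromov from the outer region where the volume is almost Euclidean).

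Now given $q_i\in B(p_i,D)$ approximating $q_\infty\in X$, apply the remark to find $z_i$ with $d_i(z_i,q_i)\le\delta_0=\Psi(m(g_i))$ and a geodesic $\gamma_{x_iz_i}$ from $x_i$ to $z_i$ disjoint from $\Sigma_i=\partial E^{\tau_i}\cup\partial M_{i,ext}$. Since $x_i\in E^{\tau_i}$ and the geodesic never crosses $\partial E^{\tau_i}$ nor exits $M_{i,ext}$ through its minimal boundary, the whole segment $\gamma_{x_iz_i}$, and in particular $z_i$, lies in $E^{\tau_i}$. (Here one uses that $E^{\tau_i}$ is a region in $M_{i,ext}$ with topological boundary contained in $\partial M_{i,ext}\cup\partial E^{\tau_i}$, so a connected curve starting in $E^{\tau_i}$ and disjoint from this boundary stays in $E^{\tau_i}$; the part of $\gamma_{x_iz_i}$ near infinity is no issue since $x_i$ and $z_i$ are in $B(p_i,2D)$ and the geodesic stays there.) Thus $z_i\in E^{\tau_i}$ and $d_i(z_i,q_i)\to 0$, which shows $q_\infty$ lies in the Gromov-Hausdorff limit of $(E^{\tau_i},d_i,p_i)$, i.e. $q_\infty\in X'$. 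Since $q_\infty\in X$ and $D$ were arbitrary, $X'=X$.

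\emph{Main obstacle.} The delicate point is the bookkeeping of constants: $\delta_0$ in Lemma \ref{nbhd-point} depends on $\varepsilon_i=|\Sigma_i\cap B(p_i,2D)|$, on the non-collapsing constant $v_0$, on $\Lambda$, on $D$, and on the ``safety distance'' $\delta_1$ from $x_i$ to $\Sigma_i$. I must verify that all of these except $\varepsilon_i$ are uniform in $i$: $\Lambda$ and $D$ are fixed; $v_0$ is uniform by non-collapsing; and the crucial $\delta_1>0$ — a uniform lower bound on $d_i(x_i,\partial E^{\tau_i})$ — comes from the effective estimates of Section \ref{effective-estimate}, which give $C^0$-closeness of $g_i$ to $g_E$ and one-to-oneness of $\mathcal{U}_i$ on a fixed Euclidean annulus around $x^*$, so $x_i$ is a uniform $g_i$-distance from $\partial E^{\tau_i}\subset M_{i,r_0}$. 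Granting that, $\delta_0=\Psi(m(g_i))\to 0$ and the argument closes. A secondary subtlety is making sure the connectedness argument ``a curve from $E^{\tau_i}$ disjoint from $\partial E^{\tau_i}$ stays in $E^{\tau_i}$'' is applied to a set with the right boundary: $E^{\tau_i}$ was defined as a connected component, so its frontier in $M_{i,ext}$ is exactly $\partial E^{\tau_i}$, and its frontier in $M_i$ adds only pieces of $\partial M_{i,ext}$, which the geodesic also avoids.
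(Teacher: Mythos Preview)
Your proposal is correct and follows essentially the same approach as the paper: both use Lemma~\ref{nbhd-point} together with the vanishing area of $\partial E^{\tau_i}$ to find, near any target point $q_i$, a perturbation $z_i$ joined to a fixed anchor $x_i\in E^{\tau_i}$ by a geodesic missing the boundary, and then conclude $z_i\in E^{\tau_i}$ by connectedness. The only cosmetic differences are that the paper argues by contradiction and picks the anchor abstractly as a point of $X'\setminus\Sigma$, whereas you argue directly and take $x_i$ explicitly in the outer region; your inclusion of $\partial M_{i,\mathrm{ext}}$ in $\Sigma_i$ is harmless extra care, since (as the paper notes) $\partial M_{i,\mathrm{ext}}$ is already separated from $E^{\tau_i}$ by $\partial E^{\tau_i}$.
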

\begin{proof}
	We argue by contradiction. Otherwise, take $y_0 \in X\setminus X' $. Choose $\delta _0>0$ small enough such that $d(y_0, X' ) > 2\delta _0>0$. Next, choose $x_0 \in X' \setminus \Sigma$, and by choosing $\delta _0$ smaller, we can assume $d(x_0, \Sigma)>2\delta _0$. Also, choose $D>0$ such that $x_0, y_0 \in B(p_X, \frac{D}{4})$. Assume $d(x_0, y_0) =L > 2\delta _0$ by choosing $\delta _0$ smaller. Now, take $x_i, y_i \in B(p_i, D)$ such that $x_i \in E^{\tau_i} $, $d_i(x_i, \partial E^\tau )>2\delta _0, d_i(y_i, \partial E^{\tau_i}) >2\delta _0$, and $x_i \to x_0, y_i \to y_0$ in the GH-topology. Here, one can consider that this GH-convergence takes place in a common metric space $X \sqcup (\sqcup_i M_i)$ with an admissible metric (see \cite[Section 11.1]{Petersen16} for more details). 

	By Proposition \ref{regular-subregion} and Lemma \ref{nbhd-point}, for sufficiently large $i$, there exists $z_i \in B(y_i, \delta _0)$ and a geodesic segment $\gamma _{x_i z_i}\subset (M_i\setminus \partial E^{\tau_i} , g_{i })$ between $x_i$ and $z_i$. Note that $M_{i,ext}$ is a connected component of $M_i \setminus \partial M_{i,ext}$ and $\gamma _{x_i z_i}\subset M_{i}\setminus \partial M_{i,ext}$, so $\gamma _{x_i z_i}\subset M_{i,ext}$ and $z_i \in M_{i,ext}$. 

	Also $\sum_{j,k=1}^3|\left<\nabla u_i^j, \nabla u_i^k \right> -\delta _{jk}|^2(z_i) < \tau_i $; otherwise, since $\sum_{j,k=1}^3|\left<\nabla u_i^j, \nabla u_i^k \right> -\delta _{jk}|^2(x_i) < \tau_i$, there will be an intersection point of $\gamma _{x_i z_i}$ and $\partial E^{\tau_i} $. Therefore, $z_i \in E^{\tau_i} $. 

	As we let $i\to \infty$, $z_i\to z_0 \in B(y_0, \delta _0) \cap X' $ in the GH-topology, which contradicts the assumption that $B(y_0, \delta _0) \cap X' = \emptyset$.
\end{proof}

From this proposition, we know that there exists $p_i' \in E^{\tau _i}$ such that $d_i(p_i, p_i') \to 0$. Notice that with any uniform perturbation of the base point, the pointed Gromov-Hausdorff limit space will remain unchanged. Therefore`, by applying a perturbation if necessary, we can assume that $p_i \in E^{\tau _i}$ for all sufficiently large $i$. Similar to case 1) in the proof of Theorem \ref{main-scalar},  we can also assume that $\mathcal{U}_i(p_i) \in B(0, L_1)$ for some uniform $L_1>L_0$. Additionally, through composition with uniform translation maps, we can ensure that $\mathcal{U}_i(p_i) = p^* $ for a fixed $p^* \in B(0, L_1) \setminus B(0, L_0)$.

Under diffeomorphisms $\mathcal{U}_i$, for the metrics $(\mathcal{U}_i^{-1})^* g_i$ over $\mathbb{R}^3\setminus B(0,L_0) \subset  Y^{\tau _i},$ which we continue to denote as $g_i$, we observe that
$$
(\mathbb{R}^3\setminus B(0,L_0), g_i) \to (\mathbb{R}^3\setminus B(0, L_0), g_E)
$$ locally in $C^0$-topology as tensors.  Furthermore, we want this convergence to hold in the pointed Gromov-Hausdorff topology, which is provided by the following lemma.

\begin{lem}\label{X'_delta}
	Fix $D, \delta >0$. For any $y, z \in E^{\tau _i} \cap B(p_i, D)$ with $d_i(y, \partial E^{\tau _i}) \geq \delta $ and $d_i(z, \partial E^{\tau _i}) \geq \delta$, we have
	$$
	| |\mathcal{U}_i(y) - \mathcal{U}_i(z)| - d_i(y,z)| \leq \Psi(m(g_i) | D, \delta ) .
	$$ 
\end{lem}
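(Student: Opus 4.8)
\textbf{Proof proposal for Lemma \ref{X'_delta}.}

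The plan is to estimate $|\mathcal{U}_i(y)-\mathcal{U}_i(z)|$ and $d_i(y,z)$ separately against a common ``Euclidean'' reference, exploiting that $\mathcal{U}_i\colon E^{\tau_i}\to Y^{\tau_i}$ is a diffeomorphism under which $|g_i^{jk}-\delta_{jk}|\le\tau_i$. The upper bound $|\mathcal{U}_i(y)-\mathcal{U}_i(z)|\le(1+\Psi(\tau_i))\,d_i(y,z)$ is immediate: any $g_i$-geodesic $\gamma$ from $y$ to $z$ inside $E^{\tau_i}$ pushes forward under $\mathcal{U}_i$ to a curve in $Y^{\tau_i}$ whose Euclidean length is at most $(1+\Psi(\tau_i))$ times its $g_i$-length, since the two metrics are $\tau_i$-close as $2$-tensors on $Y^{\tau_i}$; the Euclidean distance is no larger than this length. (A $g_i$-minimizing geodesic between $y$ and $z$ exists by completeness of $M_i$, but \emph{a priori} it may leave $E^{\tau_i}$ — this is exactly where $\delta$ and the small area of $\partial E^{\tau_i}$ enter, see below.)

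For the reverse inequality $d_i(y,z)\le(1+\Psi)|\mathcal{U}_i(y)-\mathcal{U}_i(z)|+\Psi$ one would like to take the straight Euclidean segment $[\mathcal{U}_i(y)\,\mathcal{U}_i(z)]$ and pull it back by $\mathcal{U}_i^{-1}$; if this segment lies entirely in $Y^{\tau_i}$ we again win by the $C^0$-closeness of the metrics. The obstacle is that the segment may exit $Y^{\tau_i}$ through $\mathcal{U}_i(\partial E^{\tau_i})$. To handle this I would invoke the perturbation machinery of Lemma \ref{nbhd-point} and the remark following it: since $\mathcal{H}^2_{g_i}(\partial E^{\tau_i})\le\Psi(m(g_i))$, and hence (by the bi-Lipschitz bound on $\mathcal{U}_i$) $|\mathcal{U}_i(\partial E^{\tau_i})|\le\Psi(m(g_i))$, one may perturb the endpoint $\mathcal{U}_i(z)$ within a tiny ball to a point $w$ such that $[\mathcal{U}_i(y)\,w]$ avoids $\mathcal{U}_i(\partial E^{\tau_i})$ — here we use $d_i(y,\partial E^{\tau_i})\ge\delta$, $d_i(z,\partial E^{\tau_i})\ge\delta$ to keep $\mathcal{U}_i(y)$, and the perturbed $w$, at definite distance from the bad set so the perturbation size is $\Psi(m(g_i)\mid D,\delta)$. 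Pulling back $[\mathcal{U}_i(y)\,w]$ gives a curve in $E^{\tau_i}$ joining $y$ to $\mathcal{U}_i^{-1}(w)$, and then $d_i(y,z)\le d_i(y,\mathcal{U}_i^{-1}(w))+d_i(\mathcal{U}_i^{-1}(w),z)\le(1+\Psi)|\mathcal{U}_i(y)-w|+\Psi\le(1+\Psi)|\mathcal{U}_i(y)-\mathcal{U}_i(z)|+\Psi$.

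The same perturbation argument also repairs the upper bound: if the $g_i$-geodesic from $y$ to $z$ wanders outside $E^{\tau_i}$, one instead runs the argument in $M_i$ directly, applying Lemma \ref{nbhd-point} to find a $g_i$-geodesic from $y$ to a small perturbation $z'$ of $z$ that stays in $M_i\setminus\partial E^{\tau_i}$; since $y$ lies in $E^{\tau_i}$ and the geodesic cannot cross $\partial E^{\tau_i}$, it remains in $E^{\tau_i}$ (and in $M_{i,ext}$, as in the proof of Proposition \ref{X'=X}), so its push-forward controls $|\mathcal{U}_i(y)-\mathcal{U}_i(z')|$, and one absorbs $|z-z'|$ into the error. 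Combining the two bounds, and noting all errors are of the form $\Psi(m(g_i)\mid D,\delta)$, yields the claim. The main technical point — and the only place where real work is needed — is the careful bookkeeping in the perturbation step to ensure the error depends only on $D$ and $\delta$ and tends to $0$ as $m(g_i)\to0$; everything else is a direct consequence of the $C^0$-closeness $|g_i^{jk}-\delta_{jk}|\le\tau_i$ on $Y^{\tau_i}$ already established in Proposition \ref{regular-subregion}.
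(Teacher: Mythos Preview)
Your proposal is correct and follows essentially the same route as the paper: perturb one endpoint via Lemma \ref{nbhd-point} (applied in $M_i$ with $\Sigma=\partial E^{\tau_i}$) to force a $g_i$-geodesic to stay in $E^{\tau_i}$ for the bound $|\mathcal{U}_i(y)-\mathcal{U}_i(z)|\le(1+\Psi)d_i(y,z)+\Psi$, and perturb one endpoint in $\mathbb{R}^3$ (with $\Sigma=\mathcal{U}_i(\partial E^{\tau_i})$) to force a Euclidean segment to stay in $Y^{\tau_i}$ for the reverse bound. The only step you leave slightly implicit is the transfer $d_i(y,\partial E^{\tau_i})\ge\delta\Rightarrow d_E(\mathcal{U}_i(y),\partial Y^{\tau_i})\ge\tfrac{1}{2}\delta$, which the paper proves by taking the nearest boundary point $w\in\partial Y^{\tau_i}$, noting $[\mathcal{U}_i(y)\,w]\subset Y^{\tau_i}$, and integrating; this is what makes the Euclidean-side perturbation legitimate with error depending only on $D,\delta$.
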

\begin{proof}
	For all large enough $i$, take $\delta _0= \Psi (m(g_i)| D, \delta )$ as in Lemma \ref{nbhd-point}. Then $\delta _0 <\frac{1}{2}\delta $, and by Proposition \ref{regular-subregion} and Lemma \ref{nbhd-point}, there exist $z'$ and a geodesic segment $\gamma _{y z'}$ connecting $y$ to $ z'$ such that $z' \in B(z, \delta _0)$ and $\gamma _{yz'}\cap \partial E^{\tau _i} = \emptyset$. In particular, $\gamma _{yz'}\subset E^{\tau _i}$. 
	
	Set $l= d_i(y, z')$. Note that
	$$|\mathcal{U}_i(y)- \mathcal{U}_i(z')|^2 =\sum_{j=1}^{3} (u_i^j(\gamma_{yz'} (l))- u_i^j(\gamma_{yz'} (0)) )^2= \sum_{j=1}^{3} \left( \int_{0}^{l} \left< \nabla u_i^j, \gamma_{yz'} ' \right> \right) ^2,$$
	and
$$\left| 1- \sum_{j=1}^{3}\left< \nabla u_i^j, \gamma_{yz'}' \right>^2 \right| < 4 \tau_i .$$ 
	So
	\begin{align}\label{yz'-ineq}
		\begin{split}
	|\mathcal{U}_i(y) - \mathcal{U}_i(z')| &\leq \left( \sum_{j=1}^3 l\int_{0}^{l} \left<\nabla u_i^j, \gamma_{yz'} ' \right>^2 \right) ^{\frac{1}{2}}\\
					      &\leq (1+ 4 \tau_i )l.
\end{split}
        \end{align}

	Since $d_i(z,z')<\frac{1}{2}\delta $ and $d_i(z, \partial E^{\tau _i}) >\delta $, there exists a geodesic segment connecting $z$ to $z'$ inside $E^{\tau _i}$, which implies that
	$$
	|\mathcal{U}_i(z)-\mathcal{U}_i(z')| \leq (1+4 \tau _i) d_i(z,z').
	$$ 
Thus 
$$
|\mathcal{U}_i(y) - \mathcal{U}_i(z)| \leq (1+ 4\tau _i)( d_i(y, z') + d_i(z, z') ) \leq (1+4 \tau _i)d_i(y, z) + 4 \delta _0,
$$ 
i.e. 
\begin{align}\label{yz-ineq}
|\mathcal{U}_i(y)-\mathcal{U}_i(z)|-d_i(y,z) \leq \Psi (m(g_i)|D, \delta ).
\end{align}

On the other hand, denote $y_i= \mathcal{U}_i(y), z_i=\mathcal{U}_i(z)$. Then choose a line segment $[y_iw]$ with $w \in \partial Y^{\tau _i}$ such that $|y_iw|= d_E(y_i, \partial Y^{\tau _i})$. Integration along $[y_iw]$ as above implies that 
$$
d_i(y, \mathcal{U}^{-1}_i(w) ) \leq (1+4 \tau _i) |y_i w|= (1+ 4 \tau _i) d_E(y_i, \partial Y^{\tau _i}).
$$
So $$
d_i(y, \partial E^{\tau _i}) \leq (1+ 4\tau _i) d_E(y_i, \partial Y^{\tau _i}),
$$ and particularly, $$d_E(y_i, \partial Y^{\tau _i})> \frac{1}{2}\delta. $$
Notice that the same inequality also holds for $z_i$.

By (\ref{yz-ineq}), we know that
$$
|y_iz_i| \leq d_i(y,z)+\Psi (m(g_i)|D,\delta ) \leq 2D + \Psi (m(g_i)|D,\delta ).
$$ 
By Lemma \ref{nbhd-point}, there exist $z_i' \in B(z_i, \delta _0)$ and a straight line segment $[y_iz_i']\subset Y^{\tau _i}$. Denote $z'= \mathcal{U}_i^{-1}(z_i') \in E^{\tau _i}$. Then $\gamma _1:=\mathcal{U}_i^{-1}([y_iz_i'])\subset E^{\tau _i}$ is a smooth curve between $y$ and $ z'$, $\gamma _2:=\mathcal{U}_i^{-1}([z_iz_i'])\subset E^{\tau _i}$ is a smooth curve between $z$ and $ z'$. Similar to (\ref{yz'-ineq}), we have
$$
d_i(y,z) \leq d_i(y,z') + d_i(z,z') \leq L_{g_i}(\gamma _1) + L_{g_i}(\gamma _2) \leq (1+ \Psi (m(g_i) ) ) (|y_iz_i| + \delta _0),
$$ 
i.e.
\begin{align}
d_i(y,z) - |\mathcal{U}_i(y)-\mathcal{U}_i(z)| \leq \Psi (m(g_i)|D, \delta ).
\end{align} 
\end{proof}

From this lemma, by taking a limit, we know that
\begin{align}\label{outer-isom}
(\mathbb{R}^3\setminus B(0,L_0), d_X)= (\mathbb{R}^3\setminus B(0,L_0), d_E) \subset (X, d_X).
\end{align}

Let $\{y^1, y^2, y^3\} $ be the coordinate functions of $\mathbb{R}^3$ and $e_j$ the unit vector along $y^j$-axis. Let $q_{\pm}^j=\pm 3L e_j$ for $L>4(L_0+L_1)$. By elementary geometry, we know that any $y\in B(0,3L) \subset \mathbb{R}^3$ is uniquely determined by $\{|y q_{\pm}^j|\} _{j=1}^3$. Particularly, there exists a continuous function $\xi : \mathbb{R}^6 \to \mathbb{R}$ with $\xi (0)=0$ such that for any $y, z \in B(0,3L)$, 
$$
|yz| = \xi ( |y q_{\pm}^j|-|z q_{\pm}^j|).
$$ 
We also choose an $\frac{1}{N}$-net $\{q^{a_k}\} _{k=1}^N$ of $A(0, L_0,3 L)$, where $A(0,L_0,3L)= B(0, 3L)\setminus B(0, L_0) \subset \mathbb{R}^3$ is the standard annulus. We always assume $\{q_{\pm}^j\} _{j=1}^3 \subset \{q^{a_k}\} _{k=1}^N$.
\begin{lem}\label{q-perturbation}
	For any fixed $N>10$, there exists $\delta _0= \Psi (m(g_i)|N ,L)$ such that for any $y \in Y^{\tau _i} \cap B(0, 2L)$, there exists $y' \in Y^{\tau _i}$ such that  $d_i(y, y')<\delta _0$ and
	$$
	|d_i(y', q^{a_k}) - |y' q^{a_k}| | \leq \Psi (m(g_i)|L ), \forall 1\leq k \leq N.
	$$ 
\end{lem}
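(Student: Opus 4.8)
\textbf{Proof proposal for Lemma \ref{q-perturbation}.}

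The plan is to treat the finitely many reference points $\{q^{a_k}\}_{k=1}^N$ all at once, using the volume-of-intersection estimate from the remark following Lemma \ref{nbhd-point} together with Lemma \ref{X'_delta}. First I would record that each $q^{a_k}$ lies in the region $\mathbb{R}^3\setminus B(0,L_0)\subset Y^{\tau_i}$ where, by Lemma \ref{X'_delta} and the discussion after it, $d_i$ agrees with $d_E$ up to $\Psi(m(g_i))$; in particular the points $\mathcal{U}_i^{-1}(q^{a_k})\in E^{\tau_i}$ satisfy $d_i(\mathcal{U}_i^{-1}(q^{a_k}),\partial E^{\tau_i})>2\delta_1$ for a uniform $\delta_1>0$ once $m(g_i)$ is small, since $\partial Y^{\tau_i}\subset B(0,L_0)$ while $q^{a_k}\in A(0,L_0,L)$ sits a definite distance outside it. So each of the $N$ points is a legitimate choice of ``$x$'' in Lemma \ref{nbhd-point}.

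Next, given $y\in Y^{\tau_i}\cap B(p^*,L)$, set $x=\mathcal{U}_i^{-1}(y)\in E^{\tau_i}$; note $d_i(x,p_i)\le (1+4\tau_i)L+\Psi(m(g_i))$ by integrating along a segment as in Lemma \ref{X'_delta}, so $x\in B(p_i,2L)$ for $m(g_i)$ small. Applying the remark after Lemma \ref{nbhd-point} with the ball $B(x,\delta_0)$ and each of the $N$ reference points $x_k:=\mathcal{U}_i^{-1}(q^{a_k})$: for each $k$ the set of $z\in B(x,\delta_0)$ that cannot be joined to $x_k$ by a geodesic segment avoiding $\partial E^{\tau_i}$ has volume at most $C(L,\delta_1,\Lambda)\,|\partial E^{\tau_i}|\le \Psi(m(g_i))$, whereas $\Vol B(x,\delta_0)\ge c(\delta_0,L,\Lambda,v_0)>0$ by Bishop--Gromov and the noncollapsing $\Vol B(p_i,1)\ge v_0$. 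Since there are only $N$ bad sets and $N$ is fixed, for $m(g_i)$ small enough (depending on $N$, $L$, $\delta_0$) their union does not exhaust $B(x,\delta_0)$, so there is a point $\tilde z\in E^{\tau_i}$ in that ball joined to every $x_k$ by a segment inside $M_i\setminus\partial E^{\tau_i}$, hence inside $E^{\tau_i}$ (using, as in Proposition \ref{X'=X}, that such a segment stays in $M_{i,ext}$ and that $\sum|\langle\nabla u_i^j,\nabla u_i^k\rangle-\delta_{jk}|^2<\tau_i$ is propagated along it). Set $y':=\mathcal{U}_i(\tilde z)$, so $d_i(y,y')\le(1+4\tau_i)\,d_i(x,\tilde z)\le\delta_0':=\Psi(m(g_i)|N,L)$ after relabeling.

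Finally, for each $k$ I would estimate $|d_i(y',q^{a_k})-|y'q^{a_k}||$. The segment $\gamma_k$ from $\tilde z$ to $x_k$ lies in $E^{\tau_i}$, so integrating $\langle\nabla u_i^j,\gamma_k'\rangle$ along it and using $|1-\sum_j\langle\nabla u_i^j,\gamma_k'\rangle^2|<4\tau_i$ exactly as in Lemma \ref{X'_delta} gives $|\mathcal{U}_i(\tilde z)-\mathcal{U}_i(x_k)|\le(1+4\tau_i)\,d_i(\tilde z,x_k)$, i.e. $|y'q^{a_k}|\le(1+4\tau_i)d_i(y',q^{a_k})$. For the reverse inequality, run the argument in the image: take a line segment in $Y^{\tau_i}$ from $q^{a_k}$ toward $y'$, perturb its endpoint at $y'$ by $\delta_0$ using the same volume estimate if necessary (or directly, since both $y'$ and $q^{a_k}$ are now at definite $d_E$-distance from $\partial Y^{\tau_i}$, the latter being inside $B(0,L_0)$), pull it back by $\mathcal{U}_i^{-1}$ to a curve in $E^{\tau_i}$, and bound its $g_i$-length by $(1+\Psi(m(g_i)))$ times its Euclidean length; this yields $d_i(y',q^{a_k})\le(1+\Psi(m(g_i)))|y'q^{a_k}|+\Psi(m(g_i))$. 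Combining the two bounds with $|y'q^{a_k}|\le 2L+1$ gives the claim. The main obstacle is bookkeeping the uniformity: one must check that all the error terms are genuinely $\Psi(m(g_i))$ or $\Psi(m(g_i)|N,L)$ and, crucially, that the single perturbed point $y'$ works simultaneously for all $N$ reference points — this is where the finiteness of $N$ and the ``$N$ disjoint bad sets of small volume'' remark after Lemma \ref{nbhd-point} are essential, and where $m(g_i)$ must be taken small \emph{after} fixing $N$ and $L$.
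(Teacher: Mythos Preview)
Your overall strategy matches the paper's, but there is a real gap in the reverse inequality $d_i(y',q^{a_k})\le (1+\Psi)|y'q^{a_k}|+\Psi$. The point $y'=\mathcal{U}_i(\tilde z)$ you produce is only guaranteed to have good \emph{geodesic} segments to the $x_k$; nothing ensures the Euclidean segments $[q^{a_k}y']$ lie in $Y^{\tau_i}$. Your parenthetical claim that ``$y'$ is at definite $d_E$-distance from $\partial Y^{\tau_i}$'' is false: the original $y$ is an arbitrary point of $Y^{\tau_i}\cap B(p^*,L)$ and may sit inside $B(0,L_0)$ arbitrarily close to $\partial Y^{\tau_i}$, and $y'$ is only a $\delta_0$-perturbation of it. Even if $y'$ were far from $\partial Y^{\tau_i}$, that alone would not force the whole segment $[q^{a_k}y']$ to avoid $\partial Y^{\tau_i}$. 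And if you ``perturb if necessary'' to some $y''$ with good straight segments, you now need $d_i(y',y'')$ small, which in turn requires a curve in $E^{\tau_i}$ from $y'$ to $y''$ --- exactly the kind of information you are trying to produce, so you are back where you started.

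The paper closes this gap by not collapsing to a single $\tilde z$ too early. It keeps the whole set $\hat B\subset B_i(y,\delta_0)$ of points with good geodesic segments to all $x_k$, notes that $\hat B\subset E^{\tau_i}$ has Riemannian volume $\ge(1-\Psi)\Vol B_i(y,\delta_0)\ge c(L)|B(\delta_0)|$ by Bishop--Gromov, and then transfers this to a lower bound on the Euclidean volume $|\hat B|$ via $\mathcal{U}_i$. Inside $\hat B$ it then runs the Euclidean cone argument of Lemma~\ref{nbhd-point} with the $q^{a_k}$ (which, being in $A(0,L_0,L)$, \emph{are} at a definite distance from $\partial Y^{\tau_i}$ once $N$ is fixed) as base points: the set of $y'\in\hat B$ for which some $[q^{a_k}y']$ meets $\partial Y^{\tau_i}$ has Euclidean volume $\le C(N,L)\,|\partial Y^{\tau_i}|$, so for $m(g_i)$ small there is a $y'\in\hat B$ with \emph{both} all geodesic segments $\gamma_{q^{a_k}y'}$ and all straight segments $[q^{a_k}y']$ contained in $Y^{\tau_i}$. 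With that single $y'$ the two integrations along segments give the two inequalities directly, no secondary perturbation needed. Your argument becomes correct if you make exactly this change: retain the set $\hat B$ rather than a single point, and select $y'$ inside it by the Euclidean volume argument.
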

\begin{proof}
	By the remark below Lemma \ref{nbhd-point}, there exists $\hat{B} \subset B_i(y, \delta _0) \subset M_i$ with $\Vol_i ( \hat{B}) \geq (1- \Psi (m(g_i)|N ,L) ) \Vol_i(B(y, \delta _0) )$ such that for any $y' \in \hat{B}$, the geodesic segment $\gamma _{q^{a_k}y'}$ lies in $Y^{\tau _i}$ for all $1 \leq k \leq N$. By volume comparison theorem, we know
	$$
	\frac{\Vol(B_i(y, \delta _0) )}{| B_{-\Lambda }(\delta _0)|} \geq \frac{\Vol (B_i(y, 2L) )}{| B_{-\Lambda }(2L)|} \geq \frac{\Vol( B_i(p_i, L) )}{|B_{-\Lambda }(2L)|} \geq \frac{|B(1)|}{|B_{-\Lambda }(2L)|}.
	$$ 
	So
	$$
	\frac{\Vol( B_i(y, \delta _0) )}{|B(\delta _0)|} \geq \frac{|B(1)|}{|B_{-\Lambda }(2L)|}\cdot \frac{|B_{-\Lambda }(\delta _0)|}{|B(\delta _0)|} \geq c(L)>0.
	$$ 
	Since $\hat{B}\subset Y^{\tau _i}$ and $|\hat{B}| \geq (1-\Psi ) \Vol_i(\hat{B})$, we have $$
	|\hat{B}| \geq (1-\Psi )c(L)|B(\delta _0)|.
	$$ 
	We claim that there exists $y' \in \hat{B}$ such that each straight line segment $[q^{a_k} y']$ lies in $Y^{\tau _i}$. Otherwise, by Lemma \ref{vol-area}, we have
	$$
	|\hat{B}| \leq C(N,L)\cdot |\partial Y^{\tau _i}|,
	$$ which leads to a contradiction if we choose $\delta _0= \Psi (m(g_i) |N, L)$ to be larger compared with $|\partial Y^{\tau _i}|$. 

	So, for any such $y' \in \hat{B}$, these geodesic segments $\gamma _{q^{a_k} y'}$ and straight lines $[q^{a_k} y']$ all lie in $Y^{\tau _i}$. Similar to the proof of Lemma \ref{X'_delta}, by taking integral along these segments, we can get the desired conclusion.
\end{proof}

Now we can define a map from $X$ to $\mathbb{R}^3$. For any large enough $L> 4(L_0+L_1)$ and $y \in  B(p_X, L)\subset X$, assume $x_i \in E^{\tau _i}\cap B(p_i, L)$ and $x_i \to y$ in the GH-topology. Denote $y_i= \mathcal{U}(x_i)$. Then, by Lemma \ref{X'_delta}, $y_i \in Y^{\tau _i}\cap B(p^*, 2L)$. For any fixed $N>10$, by Lemma \ref{q-perturbation}, there exists $y_{N,i}' \in Y^{\tau _i}$ with $d_i(y_i, y_{N,i}')< \Psi (m(g_i)|N, L )$ and 
$$
| d_i(y_{N,i}', q^{a_k})-|y_{N,i}'q^{a_k}| | \leq \Psi (m(g_i)|L ),\  \forall 1\leq k \leq N.
$$ 
Up to a subsequence, we can assume that $y_{N,i}'\to y_N' \in B(p^*,2L) \subset \mathbb{R}^3$ in the $d_E$-topology. For a sequence of $N \in \mathbb{N}$ approaching $\infty$, by considering a further subsequence, we can assume that $d_i(y_i, y_{N_i, i}')\to 0$ and for some $y' \in B(p^*, 2L)$, $y_{N_i,i}'\to y'$ in the $d_E$-topology.

Define
$$
\Xi _L : B(p_X,L) \to B(p^*, 2L) \subset  \mathbb{R}^3,
$$
by $\Xi_L (y)=y'.$ 

Notice that $\Xi _L(p_X) =p^* \in A(0, L_0, L)$. Additionally, from (\ref{outer-isom}), we know that $\Xi _L$ is the identity map when restricted to $A(0, L_0, 3L)$, and $\Xi _L^{-1}(A(0,L_0,3L) ) = A(0, L_0, 3L)$.

\begin{lem}\label{well-defined}
	$\Xi_L $ is well defined.
\end{lem}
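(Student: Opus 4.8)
The plan is to show that the point $y'$ produced by the double-extraction procedure does not depend on the choices made: the choice of approximating sequence $x_i \to y$, and the choices of subsequences in $i$ (first fixing $N$, then diagonalizing over $N$). The mechanism is that the numbers $d_i(y_{N,i}', q^{a_k})$ converge (by the GH-approximation $E^{\tau_i}\to X'$, once we know $x_i\to y$ and $d_i(y_i,y_{N,i}')<\Psi(m(g_i))$) to $d_X(y, q^{a_k})$ for each fixed $k$, while simultaneously they are $\Psi(m(g_i))$-close to the \emph{Euclidean} distances $|y_{N,i}' q^{a_k}|$. Hence any limit point $y_N'$ of $\{y_{N,i}'\}$ satisfies $|y_N' q^{a_k}| = d_X(y, q^{a_k})$ for all $k \le N$. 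Since $\{q^{a_k}\}_{k=1}^N\supset\{q^j_\pm\}_{j=1}^3$ and, by elementary Euclidean geometry, a point of $B(0,L)$ is uniquely determined by its six distances $|y\, q^j_\pm|$, the limit $y_N'$ is in fact independent of the subsequence, and moreover independent of $N$: for every $N$ one gets the \emph{same} point, namely the unique $w\in B(p^*,L)$ with $|w\, q^j_\pm| = d_X(y,q^j_\pm)$. Therefore the diagonal limit $y'$ equals this $w$ as well.

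First I would fix $y\in X'\cap B(p_X,\tfrac{L}{2})$ and two admissible approximating sequences $x_i\to y$, $\tilde x_i\to y$, with images $y_i=\mathcal{U}_i(x_i)$, $\tilde y_i=\mathcal{U}_i(\tilde x_i)$ in $Y^{\tau_i}\cap B(p^*,L)$ (this last containment is exactly Lemma \ref{X'_delta} together with the fact that $\{q^j_\pm\}\subset$ the exterior region where $\mathcal U_i$ is controlled). Second, for fixed $N$, I would invoke Lemma \ref{q-perturbation} to produce $y_{N,i}'$ near $y_i$ and $\tilde y_{N,i}'$ near $\tilde y_i$ with the distance-matching property for all $k\le N$. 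Third, since $E^{\tau_i}$ with $d_i$ GH-converges to $X'$ and both $x_i$ and $\tilde x_i$ converge to the same point $y$, and $d_i(y_i,y_{N,i}')\to 0$, $d_i(\tilde y_i,\tilde y_{N,i}')\to 0$, I would conclude
$$
\lim_{i\to\infty} d_i(y_{N,i}', q^{a_k}) = \lim_{i\to\infty} d_i(\tilde y_{N,i}', q^{a_k}) = d_X(y, q^{a_k}), \qquad 1\le k\le N.
$$
Combining with $|d_i(y_{N,i}',q^{a_k})-|y_{N,i}'q^{a_k}||\le \Psi(m(g_i))$ (and likewise for $\tilde y$), any subsequential $d_E$-limits $y_N'$, $\tilde y_N'$ satisfy $|y_N' q^{a_k}| = |\tilde y_N' q^{a_k}| = d_X(y,q^{a_k})$ for all $k\le N$. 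In particular taking the six indices corresponding to $q^j_\pm$, the uniqueness function $\xi$ (equivalently, direct trilateration in $\mathbb{R}^3$) forces $y_N' = \tilde y_N'$, and forces this common value to be $N$-independent. Hence the diagonal limit $y'$ is the same for both sequences, so $\Xi_L(y)$ is well defined.

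The main obstacle I anticipate is bookkeeping the order of limits cleanly: one extracts a subsequence in $i$ for each fixed $N$, then diagonalizes over $N$, and one must make sure the GH-convergence $d_i(y_{N,i}',q^{a_k})\to d_X(y,q^{a_k})$ survives the diagonalization. This is handled by noting that for the \emph{diagonal} sequence $i\mapsto (N_i,i)$ one has $N_i\to\infty$, so for any fixed $k$ the index $k$ eventually lies in the range $\{1,\dots,N_i\}$, and the estimate of Lemma \ref{q-perturbation} applies for that $k$ for all large $i$; thus $|y' q^{a_k}| = d_X(y,q^{a_k})$ for \emph{every} $k$, not merely for $k\le N$ with $N$ fixed — which is even more than needed, since the six distances $|y' q^j_\pm|$ already pin down $y'$ uniquely inside $B(p^*,L)$. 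A secondary point to check is that the perturbed points stay in $B(p^*,L)$ and that the geodesics to the $q^{a_k}$ used in Lemma \ref{q-perturbation} indeed lie in $Y^{\tau_i}$, but this is guaranteed by the construction there (using the remark after Lemma \ref{nbhd-point} and $|\partial Y^{\tau_i}|\le\Psi(m(g_i))$), so no new argument is required.
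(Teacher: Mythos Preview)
Your proposal is correct and follows essentially the same approach as the paper: both arguments reduce well-definedness to the observation that any subsequential limit $y'$ must satisfy $|y' q_\pm^j| = d_X(y, q_\pm^j)$ for $j=1,2,3$, and then invoke the trilateration function $\xi$ to conclude uniqueness. Your write-up is in fact more careful than the paper's, explicitly tracking independence from the choice of approximating sequence $x_i\to y$ and from the diagonalization over $N$, whereas the paper compresses all of this into a single two-sequence comparison.
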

\begin{proof}
	It's enough to show that for any $y \in B(p_X, L)$ with $y_i', y_i'' \in Y^{\tau _i}\cap B(p^*,2 L)$ satisfying $y_i', y_i'' \to y$ in the GH-topology and 
	$$
	| d_i(y_i', q_{\pm}^j)-|y_i'q_{\pm}^j| | \leq \Psi (m(g_i) ),\ | d_i(y_i'', q_{\pm}^j)-|y_i''q_{\pm}^j| | \leq \Psi (m(g_i) ), \forall j \in \{1,2,3\} ,
	$$ 
	if $y_i'\to y'$ and $y_i''\to y''$ in the $d_E$-topology, then $y'=y''$.

	To see this, by the assumptions, we know 
	$$
	d_X(y, q_{\pm}^j)=|y'q_{\pm}^j|=|y''q_{\pm}^j|,\ \forall j \in \{1,2,3\} .
	$$ 
	So $|y'y''|= \xi (|y'q_{\pm}^j|-|y''q_{\pm}^j|)=\xi (0)=0$.

\end{proof}

\begin{prop}\label{isometry}
	$\Xi_L: (B(p_X,L), d_X)\to (\mathbb{R}^3, d_E) $ is an isometry onto $B(p^*, L)$.
\end{prop}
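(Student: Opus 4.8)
The plan is to prove, in turn, that $\Xi_L$ is distance-preserving and that it is surjective onto $B(p^*,\frac L2)$. By Proposition \ref{X'=X}, $X'=X$, so $\Xi_L$ is defined on all of $B(p_X,\frac L2)$; recall from the construction of $\Xi_L$ and Lemma \ref{well-defined} that for $y\in B(p_X,\frac L2)$ and any sequence $x_i\in E^{\tau_i}$ with $x_i\to y$, the images $\mathcal{U}_i(x_i)$ converge to $\Xi_L(y)$ in $d_E$, and that $\mathcal{U}_i(p_i)=p^*$ forces $\Xi_L(p_X)=p^*$.

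I would first establish the isometry property away from $\Sigma:=\lim_i(\partial E^{\tau_i},d_i)\subset X$. If $y,z\in B(p_X,\frac L2)$ satisfy $d_X(y,\Sigma)>0$ and $d_X(z,\Sigma)>0$, then for approximants $x_i\to y$, $w_i\to z$ in $E^{\tau_i}$ one has $d_i(x_i,\partial E^{\tau_i}),\ d_i(w_i,\partial E^{\tau_i})\ge\delta$ for all large $i$ and a fixed $\delta>0$, so Lemma \ref{X'_delta} gives $\big|\,|\mathcal{U}_i(x_i)-\mathcal{U}_i(w_i)|-d_i(x_i,w_i)\,\big|\to0$; since $d_i(x_i,w_i)\to d_X(y,z)$ and $\mathcal{U}_i(x_i)\to\Xi_L(y)$, $\mathcal{U}_i(w_i)\to\Xi_L(z)$, this yields $|\Xi_L(y)-\Xi_L(z)|=d_X(y,z)$ on $B(p_X,\frac L2)\setminus\Sigma$. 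To pass to all of $B(p_X,\frac L2)$ I would use that this set is dense in $X$ --- indeed $\Sigma$, being a Gromov-Hausdorff limit of the smooth surfaces $\partial E^{\tau_i}$ with $\mathcal{H}^2_{g_i}(\partial E^{\tau_i})\to0$, should have $\mathcal{H}^2(\Sigma)=0$ (e.g. via the volume estimate in the remark following Lemma \ref{nbhd-point}, applied with a fixed point deep in the exterior region), hence empty interior in the topological $3$-manifold $X$ of Proposition \ref{mfd-structure} --- together with the continuity of $\Xi_L$: the bound $|\Xi_L(y)-\Xi_L(z)|\le d_X(y,z)$ holds for all $y,z$ because $(\mathcal{U}_i^{-1})^*g_i\ge(1-\tau_i)g_E$ on $Y^{\tau_i}$, so any path in $M_i$ competing for $d_i(x_i,w_i)$ can --- after applying the Bishop-Gromov perturbation of Lemma \ref{nbhd-point} with the surface $\partial E^{\tau_i}\cup\partial M_{i,ext}$, whose area still tends to $0$ by Proposition \ref{Penrose-ineq} --- be taken inside $E^{\tau_i}$ at the cost $\Psi(m(g_i))$, and its $\mathcal{U}_i$-image is a Euclidean path from $\mathcal{U}_i(x_i)$ to $\mathcal{U}_i(w_i)$ of length $\le(1+\Psi(m(g_i)))\,d_i(x_i,w_i)$. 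Thus $\Xi_L$ is a $1$-Lipschitz, distance-preserving embedding of $B(p_X,\frac L2)$ into $\mathbb{R}^3$.

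For surjectivity onto $B(p^*,\frac L2)$: since $\Xi_L$ is continuous and injective (being distance-preserving) on the open subset $B(p_X,\frac L2)$ of the topological $3$-manifold $X$, invariance of domain makes $\Xi_L(B(p_X,\frac L2))$ open in $\mathbb{R}^3$; it lies in $B(p^*,\frac L2)$ because distances and $\Xi_L(p_X)=p^*$ are preserved; and it is relatively closed in $B(p^*,\frac L2)$, for a Cauchy sequence in the image pulls back through $\Xi_L^{-1}$ to a Cauchy --- hence, $X$ being complete, convergent --- sequence whose limit still lies in $B(p_X,\frac L2)$ and maps to the prescribed limit point. As $B(p^*,\frac L2)$ is connected and the image is nonempty, open, and closed in it, the image is all of $B(p^*,\frac L2)$, and $\Xi_L$ is an isometry onto it.

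The step I expect to be the main obstacle is the passage from points off $\Sigma$ to all of $B(p_X,\frac L2)$: it requires, on one hand, confining competitor geodesics in $M_i$ to $E^{\tau_i}$ --- which is exactly where the non-length-space nature of $E^{\tau_i}$ bites and forces the use of the Bishop-Gromov perturbation lemma together with the smallness of $|\partial E^{\tau_i}|$ and $|\partial M_{i,ext}|$ --- and, on the other hand, ruling out that $\Sigma$ fills an open subset of $X$ even though it is merely a limit of area-shrinking surfaces; this is where the manifold and non-branching structure of three-dimensional Ricci limit spaces (Propositions \ref{mfd-structure}, \ref{non-branching}) enters. The surjectivity step, by contrast, is a soft invariance-of-domain and completeness argument once the isometric embedding is in hand.
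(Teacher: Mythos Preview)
Your plan—establish the isometry on $B(p_X,\tfrac L2)\setminus\Sigma$ via Lemma~\ref{X'_delta}, then extend by density—is natural and, were it to work, arguably cleaner than the paper's route (you would get injectivity for free and could replace the degree argument by invariance of domain). But the extension step has a real gap. The claim that $X\setminus\Sigma$ is dense rests on ``$\mathcal{H}^2(\Sigma)=0$,'' which you do not justify: Gromov--Hausdorff limits of surfaces with vanishing area can a priori be large subsets of the limit; no lower semicontinuity of $\mathcal{H}^2$ is available here. The remark after Lemma~\ref{nbhd-point} bounds the volume of the \emph{geodesic shadow} of $\partial E^{\tau_i}$ from a fixed far-away point, not the volume of its $\delta$-tube, and a small-area surface with uncontrolled second fundamental form can have $\delta$-tubes of definite volume. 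Relatedly, your $1$-Lipschitz argument for arbitrary $y,z$ invokes Lemma~\ref{nbhd-point}, which needs one endpoint at fixed positive distance from $\partial E^{\tau_i}$—exactly what fails when $y,z\in\Sigma$; and even after perturbing $w_i\mapsto w_i'$, you still need $|\mathcal{U}_i(w_i)-\mathcal{U}_i(w_i')|$ small, which is not given since the $d_i$-geodesic from $w_i$ to $w_i'$ may leave $E^{\tau_i}$. Finally, the assertion that $\mathcal{U}_i(x_i)\to\Xi_L(y)$ in $d_E$ for \emph{all} $y$ is not what Lemma~\ref{well-defined} provides; that lemma only controls the perturbed points $y'_{N,i}$, which are $d_i$-close but not a priori $d_E$-close to $\mathcal{U}_i(x_i)$ near $\partial E^{\tau_i}$.

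The paper bypasses these issues by never comparing two possibly-bad points directly. It uses Lemma~\ref{q-perturbation} to secure $d_X(y,q^a)=|\Xi_L(y)\,q^a|$ for every $y$ and every anchor $q^a$ in the outer annulus $A(0,L_0,L)\subset X\setminus\Sigma$; the $1$-Lipschitz bound then follows from a pure triangle-inequality step (extend the Euclidean segment $[\Xi_L(y)\,\Xi_L(z)]$ until it hits the annulus at some $q^a$, and compute $|\Xi_L(y)\,\Xi_L(z)|=d_X(y,q^a)-d_X(z,q^a)\le d_X(y,z)$). Injectivity then comes from non-branching (Proposition~\ref{non-branching}): if $\Xi_L(y_1)=\Xi_L(y_2)$, geodesics from $y_1,y_2$ to a common anchor map onto the same Euclidean segment and agree near the anchor, hence coincide. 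Surjectivity uses the manifold structure (Proposition~\ref{mfd-structure}) and degree theory, and the reverse inequality $d_X(y,z)\le|\Xi_L(y)\,\Xi_L(z)|$ is obtained by pulling back, through the now-established bijection, a Euclidean segment whose endpoints lie in the annulus.
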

\begin{proof}
	For any $y,z \in  B(p_X,L)$ and assume $y_i\to y, z_i\to z$ in the GH-topology with $y_i, z_i \in Y^{\tau _i} \cap B(p^*, 2L)$. Take $y_{N_i,i}', z_{N_i,i}'$ as mentioned above and assume $y_{N_i,i}'\to y', z_{N_i,i}'\to z'$ in the $d_E$-topology. Thus, $\Xi (y)=y'$ and $\Xi (z)=z'$. 

	Note that $d_X(y,z)=\lim_{i\to \infty}d_i(y_i,z_i)$ and 
\begin{align*}
	|d_i(y_{N_i,i}', z_{N_i,i}') - d_i(y_i, z_i)| \leq d_i(y_i, y_{N_i,i}') + d_i(z_i, z_{N_i,i}') \leq \Psi (m(g_i) ),
\end{align*}
	so $d_X(y,z)= \lim_{i\to \infty}d_i(y_{N_i,i}', z_{N_i,i}')$. 

	If we take $z= q^{a_k}$, $\forall 1 \leq k <\infty$, then
	$$
	d_X(y, q^{a_k}) = |y' q^{a_k}| .
	$$ 
	Since $\{q^{a_k}\} _{k=1}^\infty$ is dense in $A(0, L_0, 3L)$, for any $q^a \in A(0, L_0, 3L)$, we have
	\begin{align}\label{q^a-isom}
d_X(y, q^a) = |y'q^a|.
\end{align} 
In particular, $d_X(p_X, y)= |p^* y'|$.

For any $y,z \in  B(p_X, L)$, since $y', z' \in B(p^*,2 L)$, we can extend $[y'z']$ to a straight line segment in $B(p^*,2L) $ such that $q^a$ is one end point for some $q^a\in A(0, L_0, 3L)$, say $z' \in [q^ay']$. Then $$|y'z'|= |q^ay'|-|q^az'|= d_X(q^a, y)- d_X(q^a, z) \leq d_X(y,z).$$ 
So $\Xi _L$ is a $1$-Lipschitz map, and particularly continuous.

We claim that $\Xi _L$ is injective on $B(p_X, L)$. Otherwise, assume $\Xi _L(y_1)=\Xi _L(y_2)=y'$ with $y_1, y_2 \in B(p_X, L)$. For $j=1,2$, choose geodesic segments $\gamma _j$ between $y_j$ and $q^a$ for some $q^a\in A(0, L_0, 2L_0)$. Then $\gamma _j \subset B(p_X, L)$. Let $\sigma _j = \Xi _L(\gamma _j)$. Since $\Xi _L$ is $1$-Lipschitz, the length of $\sigma _j$ in $\mathbb{R}^3$ is smaller than the length of $\gamma _j$ in $X$, i.e. $$|y'q^a| \leq |\sigma _j| \leq d_X(y_j, q^a)= |y'q^a|,$$
where we used (\ref{q^a-isom}) for the last equality.
So $\sigma _1=\sigma _2=[y'q^a]$ is the unique line segment between $y'$ and $q^a$. Since $\Xi _L = \Id$ around $q^a$, we know $\gamma _1$ coincides with $\gamma _2$ around $q^a$. If $y_1 \neq y_2$, then we have branching geodesic segments in $X$. This is a contradiction with Proposition \ref{non-branching}.

By Theorem \ref{mfd-structure} and Lemma \ref{X'=X}, $B(p_X, L)$ is a topological manifold with boundary $\partial B(p_X, L)=\{x \in X: d(p_X, x)= L\} $. Since  $\Xi _L$ maps $\partial B(p_X, L)$ into $\partial B(p^*, L)$, and $\Xi _L= \Id$ on $A(0, L_0, L)$, it follows that for the induced map 
$$\Xi _L: (B(p_X, L), \partial B(p_X, L) ) \to (B(p^*, L), \partial B(p^*, L) ) ,$$
the mod $2$ degree is $1$. Applying the degree theory, we conclude that $$\Xi _L(B(p_X, L) ) = B(p^*, L).$$

So $\Xi _L: B(p_X, L) \to B(p^*, L)$ is a homeomorphism. It remains to show that it's also an isometric map. Take any $y,z \in B(p_X, L)$, and assume $y'=\Xi _L(y), z'=\Xi _L(z)$. Choose a line segment $[q^{a_-}q^{a_+}]$ with $q^{a_-}, q^{a_+}\in A(0,L_0,3L)$, length $l$ and $[y'z']\subset [q^{a_-}q^{a_+}]\subset B(p^*, L)$. Let $\gamma := \Xi _L^{-1}([q^{a_-}q^{a_+}])$. Then, due to the fact that  $\Xi _L$ is $1$-Lipschitz, we have $d_X( \gamma(t), \gamma (s) ) \geq |t-s| $ . Since $d_X( \gamma (0), \gamma (l) ) = d_X(q^{a_-}, q^{a_+}) = |q^{a_-}q^{a_+}|=l$ by (\ref{q^a-isom}), it must hold that 
$$
d_X(\gamma (t), \gamma (s) ) = |t-s|.
$$ 
In particular, $d_X(y, z) = |y'z'|$, which means $\Xi _L$ is an isometry. 

\end{proof}

Since $L$ could be any large enough positive number, this proposition implies that $(X, d_X)= (\mathbb{R}^3, d_E)$. This finishes the proof of Theorem \ref{main-ricci}.

\bibliographystyle{alpha}
\bibliography{./math}

\end{document}